\tikzstyle{box} = [rectangle,text centered, draw=black]
\tikzstyle{arrow} = [thick, ->, >=stealth]
\newcommand{\rd}{{\rm d}}
\newcommand{\e}{{\rm e}}
\newcommand{\Ran}{\mathop{\rm Ran}}
\newcommand{\R}{{\mathbb R}}
\newcommand{\C}{{\mathbb C}}
\newcommand{\Z}{{\mathbb Z}}
\newcommand\eps{\epsilon}
\newcommand\re{\mathrm{Re}\,}
\newcommand\im{\mathrm{Im}\,}
\newcommand\I{\mathrm{i}}
\DeclareMathOperator{\tr}{Tr}
\DeclareMathOperator{\supp}{supp}
\newcommand{\g}{{\rm g}}
\DeclareMathOperator{\spec}{spec}
\DeclareMathOperator{\dom}{Dom}
\newcommand{\abs}[1]{\left\lvert #1 \right\rvert}
\newcommand{\norm}[1]{\left\lVert #1 \right\rVert}
\newcommand{\normLp}[3]{\lVert #1 \rVert_{L^{#2}#3}} 
\newcommand{\schatten}{\mathfrak{S}} 
\newcommand{\normSch}[3]{\left\lVert #1 \right\rVert_{\mathfrak{S}^{#2}#3}} 
\newcommand\Hc{\mathcal{H}}
\newtheorem{theorem}{Theorem}[section]
\newtheorem{definition}[theorem]{Definition}
\newtheorem{proposition}[theorem]{Proposition}
\newtheorem{lemma}[theorem]{Lemma}
\newtheorem{remark}[theorem]{Remark}
\begin{document}

\title[{ORTHONORMAL SPECTRAL CLUSTER BOUNDS FOR NONSMOOTH METRICS}]{SPECTRAL CLUSTER BOUNDS FOR ORTHONORMAL FUNCTIONS ON MANIFOLDS WITH NONSMOOTH METRICS}

\author{Jean-Claude Cuenin}
\address[Jean-Claude Cuenin]{Department of Mathematical Sciences, Loughborough University, Loughborough, Leicestershire, LE11 3TU United Kingdom}
\email{J.Cuenin@lboro.ac.uk}

\author{Ngoc Nhi Nguyen}
\address[Ngoc Nhi Nguyen]{Università degli Studi di Milano, Dipartimento di Matematica, Via Cesare Saldini 50, 20133 Milano, Italy}
\email{ngoc.nguyen@unimi.it}

\author{Xiaoyan Su}
\address[Xiaoyan Su]{Department of Mathematical Sciences, Loughborough University, Loughborough, Leicestershire, LE11 3TU United Kingdom}
\email{X.Su2@lboro.ac.uk}

\date{May 2025}

\thanks{\copyright\, 2025 by the authors. This paper may be reproduced, in its entirety, for non-commercial purposes.}

\begin{abstract}
We establish $L^q$ spectral cluster bounds for families of orthonormal functions associated to the Laplace--Beltrami operator on a compact Riemannian manifold. The metric is only assumed to be of class $C^s$, where $0\leq s\leq 2$.
\end{abstract}

\maketitle

\section{Introduction and main results}
\subsection{Smooth metrics}
Let $(M,\g)$ be a smooth compact Riemannian manifold without boundary and of dimension $n\geq 2$. We denote by $\Delta_{\g}$ the (negative) Laplace–Beltrami operator on $M$, whose action on smooth functions, in local coordinates, is given by
\begin{align}
\Delta_{\g}u=|\g|^{-1/2}\partial_i(|\g|^{1/2}\g^{ij}\partial_j u),
\end{align}
where $|\g|=\det (\g_{ij})$ and the summation convention is in effect.
We consider $-\Delta_{\g}$ as a 
non-negative, unbounded self-adjoint operator in $L^2(M)$, defined with respect to the Riemannian measure $|\g|^{1/2}\rd x$. For $\lambda\geq 1$, the spectral projection
\begin{align*}
\Pi_{\lambda}:=\mathds{1}(\sqrt{-\Delta_{\g}}\in [\lambda,\lambda+1]),
\end{align*}
defined by the functional calculus, is the orthogonal projection onto the subspace of functions in $L^2(M)$ that are spectrally localized to frequencies in the unit length window $[\lambda,\lambda+1]$. 
We also define the spectral cluster 
\begin{align}
    E_{\lambda}:=\Ran \Pi_{\lambda}\subset L^2(M).
\end{align}
If the metric $\g$ is smooth,
a seminal result of Sogge \cite{Sogge-1988-concerning} states that
\begin{align}\label{eq:Sogge}
    \|u\|_{L^q(M)}\leq C \lambda^{\delta(q)} \|u\|_{L^2(M)},\quad u\in E_{\lambda},
\end{align}
where $2\leq q\leq \infty$ and
\begin{equation}\label{eq-def:delta_sogge}
    \delta(q)=\begin{cases}
        \frac{n-1}{2}\left(\frac12-\frac1q\right) & \text{if}\ 2\leq q\leq q_n \, ,\\
        n\left(\frac12-\frac1q\right)-\frac12 & \text{if}\ q_n\leq q\leq \infty ,
       \end{cases}
       \quad q_n=\frac{2(n+1)}{n-1}.
\end{equation}
More recently, also for smooth metrics, Frank and Sabin \cite[Theorem 3]{FrankSabin-2017clust} proved the following generalization of~\eqref{eq:Sogge} to families of orthonormal functions $(u_j)_{j\in J}\subset E_{\lambda}$,
\begin{align}\label{Frank-Sabin orthoclusters}
\bigg\|\sum_{j\in J}\nu_j|u_j|^2\bigg\|_{L^{q/2}(M)}\leq C \lambda^{2\delta(q)}\bigg(\sum_{j\in J}|\nu_j|^{\alpha(q)}\bigg)^{1/\alpha(q)},
\end{align}
where $(\nu_j)_{j\in J}\subset \C$ and
\begin{equation}\label{eq-def:alpha}
    \alpha(q) :=\begin{cases}
    \frac{2q}{q+2} & \text{if}\ 2\le q\le q_n ,\\
    \frac{q(n-1)}{2n} & \text{if}\ q_n\le q\le \infty.
       \end{cases}
\end{equation}
Here and in the following, $J$ denotes any countable index set. In comparison, using \eqref{eq:Sogge} and the triangle inequality, one only obtains \eqref{Frank-Sabin orthoclusters} with $\alpha=1$ (without the orthogonality assumption). The crucial feature of \eqref{Frank-Sabin orthoclusters}--\eqref{eq-def:alpha} is that $\alpha>1$.
When $\#J=1$, \eqref{Frank-Sabin orthoclusters} coincides with Sogge's bound \eqref{eq:Sogge}. On the other extreme, when $\#J=\dim E_{\lambda}$, then \eqref{Frank-Sabin orthoclusters}, with $\nu_j=1$ for all $j\in J$, yields
\begin{align}\label{extreme case all functions in Frank-Sabin}
\bigg\|\sum_{j\in J}|u_j|^2\bigg\|_{L^{q/2}(M)}\leq C\lambda^{n-1},\quad 2\leq q\leq \infty,
\end{align}
due to the fact that $\dim E_{\lambda}\approx \lambda^{n-1}$, by the sharp Weyl law (see, e.g., \cite[Chapter 4]{MR3645429}). The sum in \eqref{extreme case all functions in Frank-Sabin} is independent of the choice of basis $(u_j)_{j\in J}$ of $E_{\lambda}$ and coincides with the spectral function corresponding to the interval $[\lambda,\lambda+1]$. The inequality \eqref{Frank-Sabin orthoclusters} can be seen as an optimal interpolation between the two extreme cases \eqref{eq:Sogge} and \eqref{extreme case all functions in Frank-Sabin}.

The insight that orthonormality improves the dependence on the number of functions, compared to merely applying the triangle inequality, originally stems from the celebrated work of Lieb and  Thirring \cite{LiebThirring1975,LiebThirring1976}, see also Lieb \cite{MR701053}, in the context of the Sobolev inequalities. More recently, this idea was extended to  Strichartz inequalities in, e.g., \cite{Frank-Lewin-Lieb-Seiringer-2014,FrankSabin-2017rest,MR3250975,MR3985036,MR4068269,MR4202486,MR4725147}. Spectral cluster estimates for semiclassical Schr\"odinger operators with smooth confining potentials were established by the second author in \cite[Theorem 7.2]{Nguyen-2022}, generalizing the single function bounds of Koch, Tataru, and Zworski \cite{Koch-Tataru-Zworski-2007}.

\subsection{Nonsmooth metrics}
An interesting question is whether the bounds \eqref{eq:Sogge} or \eqref{Frank-Sabin orthoclusters} hold for metrics of limited regularity. This is especially relevant for applications to nonlinear equations, where the metric evolves in time.

A breakthrough was achieved by Smith in \cite{Smith-2006-C11}, where he proved that~\eqref{eq:Sogge} remains valid for 
 $C^{1,1}$  metrics. This level of differentiability ensures uniqueness of the geodesic flow. More recently, Chen and Smith \cite{MR3900030} established the same bounds under the weaker assumption that the sectional curvatures of the manifold are uniformly bounded.

Counterexamples by Smith and Sogge \cite{MR1306017} as well as Smith and Tataru \cite{MR1909638} show that $C^{1,1}$ is indeed best possible in the $C^s$ scale. More precisely, they
constructed examples of $C^s$ metrics, for each $0<s<2$ (Lipschitz in case
$s = 1$), for which there are functions $u\in E_{\lambda}$ such that for all $q\geq 2$
\begin{align}\label{counterexamples Smith-Sogge and Smith-Tataru}
    \frac{\|u\|_{L^q(M)}}{\|u\|_{L^2(M)}}\geq c\lambda^{\frac{n-1}{2}(\frac{1}{2}-\frac{1}{q})(1+\frac{2-s}{2+s})},
\end{align}
where $c>0$. 
Thus, these examples exhibit multiplicative losses of $\lambda^{\delta(q)\frac{2-s}{2+s}}$, compared to the smooth case, for $2<q<2(n+2s^{-1})/(n-1)$; for later reference, we note that in the case $n=2$, $s=1$, this range is $2<q<8$.

In \cite{Smith-2006sharp}, Smith proved the upper bounds corresponding to \eqref{counterexamples Smith-Sogge and Smith-Tataru}, i.e.
\begin{align}\label{Smith upper bound on small balls}
    \|u\|_{L^q(M)}\leq C\lambda^{\frac{n-1}{2}(\frac{1}{2}-\frac{1}{q})(1+\frac{2-s}{2+s})},\quad u\in E_{\lambda},
\end{align}
for $1\leq s<2$ (Lipschitz if $s=1$) in the range $2\leq q\leq q_n$.
He also proved no loss estimates in the range $q_n\leq q\leq \infty$ on cubes of sidelength $R=\lambda^{-\frac{2-s}{2+s}}$. 
Since the constant in these estimates is uniform over all cubes, he also obtains global $L^{\infty}$ bounds for Lipschitz metrics that are as good as in the case of smooth metrics,
\begin{align}\label{Smith Linfty bounds}
    \|u\|_{L^{\infty}(M)}\leq C\lambda^{\frac{n-1}{2}},\quad u\in E_{\lambda}.
\end{align}
Interpolation between $q=q_n$ and $q=\infty$ yields bounds with exponents that are strictly larger than predicted by the
counterexamples in \cite{MR1306017}. Koch, Smith, and Tataru \cite{MR3282983} developed new techniques that combine energy flow estimates for Lipschitz coefficients with combinatorial arguments and proved no-loss estimates (up to a logarithmic loss) for Lipschitz ($s=1$) metrics in $n=2$ dimensions. This is best possible in view of the counterexamples in \cite{MR1306017}.

For H\"older metrics, that is $0< s< 1$, Koch, Smith, and Tataru \cite{KochSmithTataru-2007-holder} proved estimates with a further loss of $\lambda^{(1-s)(1/2-1/q)}$ in the range $2\leq q\leq q_n$. It is currently not known what the sharp bounds are for $q_n<q<\infty$.
Davies \cite{MR1037600} showed that for $s=0$, spectral cluster bounds are no better than what can be obtained by Sobolev embedding.

Related results include Blair's work \cite{MR2457396} on metrics with Sobolev regularity, as well as his work \cite{MR3148055} on restrictions of spectral clusters to submanifolds for low regularity metrics. We should also mention the work of Smith and Sogge \cite{MR2316270}, where sharp spectral cluster bounds on two-dimensional manifolds with boundaries are established. Although the metric is smooth to begin with, after doubling the manifold, one ends up with a manifold without boundary but with a Lipschitz metric.

The aim of this article is to extend (a subset of) the spectral cluster bounds for nonsmooth metrics, discussed above, to families of orthonormal functions. We will focus on the results of Smith \cite{Smith-2006-C11, Smith-2006sharp} (for the $C^{1,1}$ and $C^s$, $1\leq s<2$, case, respectively) and the result of Koch, Smith, and Tataru \cite{KochSmithTataru-2007-holder} for the H\"older case ($0<s<1$). We plan to revisit other results mentioned above at a later stage.

\subsection{Main results}\label{subsec:main-results}
The main results of this paper are the following generalizations of \cite{Smith-2006-C11,Smith-2006sharp,KochSmithTataru-2007-holder} to orthonormal systems.
For $s\in [0,2]$, we set 
\begin{align}
 \Pi_{\lambda}:=\mathds{1}(\sqrt{P}\in [\lambda,\lambda+\lambda^{(1-s)_+}]),\quad E_{\lambda}:=\Ran \Pi_{\lambda},
\end{align}
where $P=-\Delta_{\g}$ is defined by quadratic form methods, see Section \ref{subsec. quad. form} (due to the limited regularity of $\g$), and $x_+:=\max(x,0)$.

\begin{theorem}[The $C^{1,1}$ case]\label{thm:main-result_C2}
    Let $(M,\g)$ be a smooth compact boundaryless Riemannian manifold of dimension $n\geq 2$.
    Assume that the metric $\g$ is of class $C^{1,1}$.
    Then there exists $C>0$ such that for any $\lambda\geq 1$, any orthonormal system $(u_j)_{j\in J}\subset E_{\lambda}$ and any sequence $(\nu_j)_{j\in J}\subset \C$, the bound \eqref{Frank-Sabin orthoclusters} holds for all $2\leq q\leq\infty$, with $\delta(q)$ and $\alpha(q)$ as in \eqref{eq-def:delta_sogge} and \eqref{eq-def:alpha}, respectively.
\end{theorem}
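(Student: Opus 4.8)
The plan is to dualize the orthonormal inequality into a Schatten-norm bound on $\Pi_\lambda V\Pi_\lambda$, reduce that by interpolation to a single critical estimate at the exponent $q_n$, and prove the latter by feeding Smith's $C^{1,1}$ wave parametrix into the Stein--Tomas argument for orthonormal systems. By the duality principle of \cite{FrankSabin-2017clust} (a purely functional-analytic statement that does not see the metric), the bound \eqref{Frank-Sabin orthoclusters} for all orthonormal $(u_j)_{j\in J}\subset E_\lambda$ and all $(\nu_j)_{j\in J}\subset\C$ follows once we show
\[
\|\Pi_\lambda V\Pi_\lambda\|_{\mathfrak S^{\alpha(q)'}}\le C\lambda^{2\delta(q)}\|V\|_{L^{(q/2)'}(M)}\qquad\text{for all }V\in L^{(q/2)'}(M).
\]
Indeed, for $\Gamma=\sum_j\nu_j\ket{u_j}\bra{u_j}$ one has $\|\Gamma\|_{\mathfrak S^{\alpha}}=\|\nu\|_{\ell^{\alpha}}$ and $\Pi_\lambda\Gamma\Pi_\lambda=\Gamma$, so that $\int_M V\sum_j\nu_j|u_j|^2\,|\g|^{1/2}\rd x=\tr(\Gamma\,\Pi_\lambda V\Pi_\lambda)$, and Hölder's inequality in the Schatten scale closes the loop.

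At the two extreme exponents the Schatten bound is elementary. For $q=2$ it is just $\|\Pi_\lambda V\Pi_\lambda\|_{\mathfrak S^{\infty}}\le\|V\|_{L^{\infty}}$. For $q=\infty$, writing $V$ as a combination of four nonnegative functions and using $\|\Pi_\lambda V\Pi_\lambda\|_{\mathfrak S^{1}}=\int_M V(x)\,\Pi_\lambda(x,x)\,|\g|^{1/2}\rd x$ together with the on-diagonal bound $\Pi_\lambda(x,x)\le\|\Pi_\lambda\|_{L^2\to L^{\infty}}^2\lesssim\lambda^{n-1}$ — which for $C^{1,1}$ metrics is Smith's $L^\infty$ cluster bound \cite{Smith-2006sharp} — gives $\|\Pi_\lambda V\Pi_\lambda\|_{\mathfrak S^{1}}\lesssim\lambda^{n-1}\|V\|_{L^{1}}$. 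The entire range $2\le q\le\infty$ then follows by complex interpolation of the fixed linear map $V\mapsto\Pi_\lambda V\Pi_\lambda$ along the $L^p$ and Schatten scales, provided we establish the critical estimate
\[
\|\Pi_\lambda V\Pi_\lambda\|_{\mathfrak S^{n+1}}\le C\lambda^{\frac{n-1}{n+1}}\|V\|_{L^{(n+1)/2}(M)}
\]
at $q=q_n=\tfrac{2(n+1)}{n-1}$: interpolating it against the $q=2$ endpoint covers $2\le q\le q_n$, and against the $q=\infty$ endpoint covers $q_n\le q\le\infty$, and one checks directly that $\alpha(q)'$, $(q/2)'$ and $2\delta(q)$ all interpolate correctly on each segment.

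For the critical estimate, pick $\rho\in\schwartz(\R)$ with $\rho(0)=1$ and $\widehat\rho$ supported in $(-\eps,\eps)$ for small $\eps$, so that $\Pi_\lambda$ equals, up to a rapidly $\lambda$-decaying error, the operator $\rho(\sqrt P-\lambda)$, which by the half-wave representation is an average of $\cos(t\sqrt P)$ over $|t|\lesssim\eps$. Since $\g\in C^{1,1}$, replace $\g$ by a frequency-$\lambda$ regularization $\g_\lambda$; Smith's FBI/wave-packet parametrix for $\cos(t\sqrt{-\Delta_{\g_\lambda}})$ is then valid over a full unit time interval, and the elliptic error $(P-P_\lambda)\Pi_\lambda$ is small enough to be removed by iteration. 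After a microlocal partition of unity and canonical transformations, the frequency-$\lambda$ piece of $\Pi_\lambda$ becomes a sum of oscillatory integral operators of Carleson--Sj\"olin type at scale $\lambda$, whose phases inherit — uniformly in $\lambda$ and in the regularization — the rank-$(n-1)$ nondegeneracy of the cosphere of $\g_\lambda$; equivalently one has the dispersive bound $\|\chi(\sqrt P/\lambda)\cos(t\sqrt P)\chi(\sqrt P/\lambda)\|_{L^1\to L^{\infty}}\lesssim\lambda^{n}(\lambda|t|)^{-\frac{n-1}{2}}$ for $\lambda^{-1}\le|t|\lesssim\eps$, while $|t|\gtrsim\eps$ contributes lower order via unitarity of $\e^{\I t\sqrt P}$ and the decay of $\widehat\rho$. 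One then upgrades these scalar estimates to the $\mathfrak S^{n+1}$ bound exactly as in the orthonormal Stein--Tomas theorem of \cite{FrankSabin-2017rest,FrankSabin-2017clust}: insert an analytic family of Riesz-type weights/surface-measure continuations, extract a Hilbert--Schmidt ($\mathfrak S^2$) bound from the kernel size at one edge of the strip and an $\mathfrak S^{\infty}$ bound from the dispersive decay at the other, apply Stein interpolation, and recombine the microlocal and wave-packet pieces.

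The main obstacle is precisely this last, Schatten-valued, oscillatory integral estimate at $q_n$, uniformly in $\lambda$. Two points require care. First, one must check that Smith's $C^{1,1}$ wave-packet parametrix retains enough oscillation — the nondegeneracy of the phase Hessian, with constants independent of $\lambda$ and of the regularization — for the complex-interpolation step to run; this is exactly where the $C^{1,1}$ hypothesis (rather than a general $C^s$, $s<2$) is used, since it guarantees the parametrix is sharp over a full unit time interval with no $\lambda$-loss. Second, the reduction to model operators produces sums over wave packets and over a microlocal partition of unity, and these must be recombined by an almost-orthogonality argument valid in $\mathfrak S^{n+1}$ — not merely in $L^2$ or $\mathfrak S^2$ — so as not to destroy the gain $\alpha(q_n)>1$ that is the entire purpose of the estimate.
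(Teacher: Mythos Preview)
Your overall architecture---duality to a Schatten bound on $\Pi_\lambda V\Pi_\lambda$, the $q=2$ and $q=\infty$ endpoints, and interpolation from the critical $q=q_n$---matches the paper's. The route to the critical $\mathfrak S^{n+1}$ estimate, however, is genuinely different. You propose to stay with the wave group: smooth the projector to $\rho(\sqrt P-\lambda)$, invoke Smith's wave-packet parametrix for $\cos(t\sqrt P)$, and feed the resulting oscillatory/dispersive structure into the Frank--Sabin Stein--Tomas machinery. The paper instead abandons the wave equation and works stationarily: it passes to a resolvent formulation (Proposition~\ref{prop. equivalence of cluster, qm and resolvent}), regularizes at scale $\sqrt\lambda$, microlocalizes, and factors the symbol as $p=e\,(\xi_1-a)$ in a cone to reduce to a first-order evolution $D_1-a^w$ with $x_1$ playing the role of time. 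The dispersive input is then a black-box citation of Koch--Tataru \cite[Proposition~4.7]{KochTataru-2005}, converted via Dunford--Pettis to a pointwise kernel bound, interpolated between $\mathfrak S^2$ and $\mathfrak S^\infty$, and closed with the multilinear Hardy--Littlewood--Sobolev inequality (Lemma~\ref{lemma KT05 type}). This buys the paper an off-the-shelf dispersive estimate in exactly the symbol class $C^2S_{\lambda,\lambda^{1/2}}$ produced by the regularization, a first-order flow whose unitarity is immediate, and a framework that rescales cleanly for the $s<2$ theorems.

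The two difficulties you flag at the end are genuine, and your proposal does not actually resolve them. Smith's parametrix is not a single Carleson--Sj\"olin oscillatory integral $e^{\I\lambda\phi}a$ with a smooth phase; it is a superposition of coherent states propagated along approximate bicharacteristics, and Frank--Sabin's Theorem~1 is stated for $C^\infty$ phases. So either you must verify that their analytic-interpolation argument survives for the wave-packet kernel with constants uniform in $\lambda$ and in the regularization---which is nowhere in the literature---or you must extract from Smith's construction the fixed-time $L^1\to L^\infty$ bound $\lesssim\lambda^{(n-1)/2}|t|^{-(n-1)/2}$ and then run a trace/HLS argument as in Lemma~\ref{lemma KT05 type}. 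The latter is essentially what the paper does, but for the first-order flow rather than the half-wave group, precisely because Koch--Tataru already supply that dispersive bound in the relevant symbol class. The paper also notes (see the discussion around~\eqref{expression square function bounds}) that the more direct attempt to port Smith's own square-function argument fails for orthonormal systems because the Littlewood--Paley recombination step is incompatible with $\mathfrak S^\alpha$ for $\alpha>1$; your variant is not that one, but until the parametrix-to-$\mathfrak S^{n+1}$ step is written out, what you have is a plan rather than a proof.
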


\begin{theorem}[The case $s\in[1,2)$]\label{thm:main-result_Lip}
    Let $(M,\g)$ be a smooth compact boundaryless Riemannian manifold of dimension $n\geq 2$.
    Assume that the metric $\g$ is of class $C^s$ for $s\in[1,2)$ (Lipschitz in case $s=1$).
    Then there exists $C>0$ such that for any $\lambda\geq 1$, any orthonormal system $(u_j)_{j\in J}\subset E_{\lambda}$ and any sequence $(\nu_j)_{j\in J}\subset \C$, we have, for all $2\leq q\leq\infty$,
    \begin{align}\label{eq:spectral cluster bounds}
        \bigg\|\sum_{j\in J}\nu_j|u_j|^2\bigg\|_{L^{q/2}(M)}\leq C\lambda^{2\delta_s(q)}\bigg(\sum_{j\in J}|\nu_j|^{\alpha(q)}\bigg)^{1/\alpha(q)},
    \end{align}
     where $\alpha(q)$ is given by \eqref{eq-def:alpha}, and
    \begin{equation}\label{eq-def:delta-Lips}
        \delta_s(q):= \begin{cases}
             \frac{n-1}{2}\left(1+\frac{2-s}{2+s}\right)\left(\frac12-\frac1q\right)   &\text{if}\ 2\leq q\leq q_n,\\
            n\left(\frac12-\frac1q\right) -\frac 12+\frac{2-s}{2+s}\frac 1q&\text{if}\ q_n\leq q\leq \infty.
        \end{cases}
    \end{equation}    
    Moreover, for any cube $Q_R\subset M$ of sidelength $R =\lambda^{-\frac{2-s}{2+s}}$, we have
    \begin{align}\label{small_ball_estimates-Lip}
        \bigg\|\sum_{j\in J}\nu_j|u_j|^2\bigg\|_{L^{q/2}(Q_R)}\leq C\lambda^{2\delta(q)}\bigg(\sum_{j\in J}|\nu_j|^{\alpha(q)}\bigg)^{1/\alpha(q)},\quad q_n\leq q\leq \infty,
    \end{align}
     with $\delta(q)$ as in \eqref{eq-def:delta_sogge}.
\end{theorem}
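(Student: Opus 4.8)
The plan is to adopt the Frank--Sabin strategy: dualise the orthonormal inequality \eqref{eq:spectral cluster bounds} into a Schatten-norm bound for the spectral projection $\Pi_\lambda$ sandwiched by a multiplication operator, and then establish that bound by revisiting Smith's half-wave parametrix for $C^s$ metrics \cite{Smith-2006sharp} at the level of Schatten norms, feeding in the smooth-metric orthonormal estimate \eqref{Frank-Sabin orthoclusters} at the relevant (rescaled) frequency.

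First I would perform the duality reduction. For fixed $q$, using the duality between $\ell^{\alpha(q)}$-summable orthonormal families and the Schatten class $\mathfrak{S}^{\alpha(q)'}$ (with $\alpha(q)'=\alpha(q)/(\alpha(q)-1)$), the inequality \eqref{eq:spectral cluster bounds} for all orthonormal $(u_j)_{j\in J}\subset E_\lambda$ and all $(\nu_j)_{j\in J}\subset\C$ is equivalent to
\begin{align*}
    \|\Pi_\lambda W\|_{\mathfrak{S}^{2\alpha(q)'}}\le C\lambda^{\delta_s(q)}\|W\|_{L^{2q/(q-2)}(M)}\qquad\text{for all measurable }W,
\end{align*}
where $V=W^2$ plays the role of the dual test function and one passes from $\|\Pi_\lambda V\Pi_\lambda\|_{\mathfrak{S}^{\alpha(q)'}}$ to $\|\Pi_\lambda|V|^{1/2}\|_{\mathfrak{S}^{2\alpha(q)'}}^{2}$ via the factorisation $\Pi_\lambda V\Pi_\lambda=(\Pi_\lambda|V|^{1/2})(\sgn V)(\Pi_\lambda|V|^{1/2})^{*}$ and Hölder's inequality in the Schatten classes. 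By complex interpolation --- Stein's theorem applied to the analytic family $W\mapsto|W|^{z}\Pi_\lambda$, interpolating simultaneously the Schatten exponent and the Lebesgue exponent of the weight --- it then suffices to prove this at the two endpoints $q=q_n$ and $q=\infty$; the case $q=2$ (where $\delta_s(2)=0$, $\alpha(2)=1$) is the trivial bound $\|\sum_j\nu_j|u_j|^2\|_{L^1(M)}\le\|\nu\|_{\ell^1}$. A direct computation with \eqref{eq-def:delta-Lips} and \eqref{eq-def:alpha} --- using $\alpha(q_n)'=n+1$, $\delta_s(q_n)=\tfrac{2(n-1)}{(n+1)(2+s)}$, $\delta_s(\infty)=\tfrac{n-1}{2}$ and $\alpha(\infty)'=1$ --- confirms that interpolation between these endpoints reproduces, at every intermediate $q$, the Schatten exponent $2\alpha(q)'$, the Lebesgue exponent $2q/(q-2)$ and the power $\delta_s(q)$, on each of the two ranges $[2,q_n]$ and $[q_n,\infty]$.

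The endpoint $q=\infty$ is immediate: the required bound is $\tr(W\Pi_\lambda W)=\int_M\Pi_\lambda(x,x)\,|W(x)|^2\,\rd\mathrm{vol}_\g\le C\lambda^{n-1}\|W\|_{L^2(M)}^2$, which is the pointwise Weyl bound $\Pi_\lambda(x,x)\le C\lambda^{n-1}$ for the spectral function, valid already for Lipschitz metrics (equivalently, it is Smith's $L^\infty$ bound \eqref{Smith Linfty bounds}). The crux is therefore the critical endpoint $q=q_n$, namely $\|\Pi_\lambda W\|_{\mathfrak{S}^{2(n+1)}}\le C\lambda^{\delta_s(q_n)}\|W\|_{L^{n+1}(M)}$, together with the small-ball estimate \eqref{small_ball_estimates-Lip}. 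Here the plan is to revisit Smith's construction in \cite{Smith-2006sharp} --- regularisation of $\g$ at the appropriate scale, a wave-packet (FBI-transform) parametrix for the regularised wave equation on a short time interval, and control of the error coming from the unregularised part of $\g$ --- and to carry it out keeping track of Schatten norms. Concretely, this represents a spectrally localised version of $\Pi_\lambda$ as a superposition of oscillatory-integral operators attached to a rescaled, effectively smooth geometry; each such building block, conjugated by $W$, should be estimated in $\mathfrak{S}^{2(n+1)}$ (or the relevant $\mathfrak{S}^{2\alpha(q)'}$) by invoking the smooth-metric orthonormal Stein--Tomas / spectral-cluster inequality --- essentially \eqref{Frank-Sabin orthoclusters} and the restriction-estimate machinery of \cite{FrankSabin-2017rest} --- at the rescaled frequency, and the pieces should then be recombined using the triangle inequality in $\mathfrak{S}^{2(n+1)}$ (legitimate since $2(n+1)\ge1$) and Smith's wave-packet almost-orthogonality. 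The no-loss small-ball estimate \eqref{small_ball_estimates-Lip} emerges from the same construction restricted to a single cube $Q_R$ of sidelength $R=\lambda^{-\frac{2-s}{2+s}}$ --- the scale on which the regularised geometry is accurate to the order needed --- whereas the global estimate at $q_n$ carries the additional factor $\lambda^{(n-1)(2-s)/((n+1)(2+s))}=\lambda^{2\delta_s(q_n)-2\delta(q_n)}$ reflecting the mismatch between the $\lambda^{-1/2}$ tube scale of the wave packets and the coarser scale $R$ at which $\g$ is regular, exactly as in the scalar estimates of \cite{Smith-2006sharp}.

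Combining the above, interpolation yields the Schatten bound for all $2\le q\le\infty$, running the duality of the first step backwards yields \eqref{eq:spectral cluster bounds}, and \eqref{small_ball_estimates-Lip} is obtained as indicated. I expect the main obstacle to be the critical endpoint: executing Smith's parametrix construction while tracking Schatten, rather than $L^2\to L^q$, norms. This requires, on the one hand, Schatten bounds for the individual wave-packet and oscillatory-integral building blocks --- which is where the smooth-metric orthonormal restriction/cluster estimates of \cite{FrankSabin-2017rest,FrankSabin-2017clust} must be inserted in place of the scalar Carleson--Sj\"olin / Stein--Tomas estimates --- and, on the other hand, a recombination of those pieces organised so as to produce exactly the loss $\lambda^{(n-1)(2-s)/((n+1)(2+s))}$ predicted by the counterexamples \eqref{counterexamples Smith-Sogge and Smith-Tataru} and nothing worse; the absence, for Schatten exponent greater than $2$, of the orthogonality that makes the scalar square-function arguments clean is what makes this recombination delicate.
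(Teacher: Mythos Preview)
Your overall architecture---dualise to a Schatten bound for $W\Pi_\lambda$, dispatch $q=2$ trivially and $q=\infty$ via the pointwise Weyl bound, then interpolate from the critical endpoint $q=q_n$---is correct and coincides with the paper's framework. The gap is entirely at the critical endpoint, and it is twofold.

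First, after regularising $\g$ at the scale $\lambda^{2/(2+s)}$ and rescaling to a cube of sidelength $R=\lambda^{-(2-s)/(2+s)}$, the resulting metric is \emph{uniformly $C^{1,1}$}, not uniformly smooth: higher derivatives blow up like powers of $\lambda$. You therefore cannot invoke the Frank--Sabin smooth-metric result \eqref{Frank-Sabin orthoclusters} as a black box on the rescaled pieces; you need the orthonormal $C^{1,1}$ bound (Theorem~\ref{thm:main-result_C2}) at the rescaled frequency $\mu=\lambda R$. This is precisely what the paper does (Lemma~5.2).

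Second, and more seriously, your proposed route to the $C^{1,1}$ foundation---Smith's wave-packet / square-function parametrix, with Frank--Sabin replacing the scalar Carleson--Sj\"olin input---runs into an obstruction the paper explicitly identifies in Section~1.5. The recombination step requires controlling $\sum_k 2^{2\delta(q)k}\|P_k\gamma P_k\|_{\mathfrak{S}^{\alpha(q)}}$ by $\|\gamma\|_{\mathfrak{S}^{\alpha(q)}(H^{\delta(q)})}$, and this fails for $\alpha(q)>1$ (i.e.\ for all nontrivial $q$). You correctly flag this as ``delicate'' in your last sentence, but you offer no resolution, and the paper found none along this line. Instead, the paper abandons the wave-equation route entirely and proves the $C^{1,1}$ orthonormal bound by a stationary argument: microlocalise and factorise to a first-order evolution $D_t-a^w$, use the Koch--Tataru dispersive estimate $\|S(t,r)\chi^w\|_{L^1\to L^\infty}\lesssim(\lambda/|t-r|)^{(n-1)/2}$ together with Dunford--Pettis to get pointwise kernel bounds, interpolate to a fixed-time $\mathfrak{S}^{n+1}$ bound, and close with the multilinear Hardy--Littlewood--Sobolev inequality in the $t$-variable (Lemma~4.8). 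Once Theorem~\ref{thm:main-result_C2} is in hand, the $s\in[1,2)$ case follows by the rescaling (Lemma~5.2) plus an energy/resolvent estimate on slabs $S_R=\{|x_1-c|\le R\}$ (Lemma~5.3), summed over $R^{-1}$ slabs to produce exactly the $R^{-2/q_n}=\lambda^{2\delta_s(q_n)-2\delta(q_n)}$ loss. The small-ball estimate \eqref{small_ball_estimates-Lip} is then just the rescaled $C^{1,1}$ bound on a single cube, as you say.
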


\begin{theorem}[The case $s\in [0,1)$]\label{thm:main-result_Holder}
    Let $(M,\g)$ be a smooth compact boundaryless Riemannian manifold of dimension $n\geq 2$.
    Assume that the metric $\g$ is of class $C^s$ for $s\in[0,1)$.
    Then there exists $C>0$ such that for any $\lambda\geq 1$, any orthonormal system $(u_j)_{j\in J}\subset E_{\lambda}$ and any sequence $(\nu_j)_{j\in J}\subset \C$, we have, for all $2\leq q\leq\infty$,
    \begin{align}\label{eq:spectral_cluster_bounds-Holder}
        \bigg\|\sum_{j\in J}\nu_j|u_j|^2\bigg\|_{L^{q/2}(M)}\leq C\lambda^{2\delta_s(q)}\bigg(\sum_{j\in J}|\nu_j|^{\alpha(q)}\bigg)^{1/\alpha(q)},
    \end{align}    
    where $\alpha(q)$ is given by \eqref{eq-def:alpha}, and
    \begin{equation}\label{eq-def:delta-s-Holder}
        \delta_s(q):= \begin{cases}
            \left(\frac{n-1}{2}\left(1+\frac{2-s}{2+s}\right)+1-s\right)\left(\frac12-\frac1q\right)   &\text{if}\ 2\leq q\leq q_n,\\ 
             n\left(\frac12-\frac1q\right) -\frac 12+\frac{2-s}{2+s}\frac 1q+ (1-s)\left(\frac 12-\frac 1q\right) &\text{if}\ q_n\leq q\leq \infty.
        \end{cases}
    \end{equation}     
\end{theorem}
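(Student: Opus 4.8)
The plan is to reduce \eqref{eq:spectral_cluster_bounds-Holder} to the endpoint exponents $q=q_n$ and $q=\infty$ (together with the trivial case $q=2$) and to recover the intermediate ranges by interpolation. Fixing $\lambda$ and an orthonormal system $(u_j)_{j\in J}\subset E_\lambda$, which is finite since $\dim E_\lambda<\infty$, the map $T\colon(\nu_j)_j\mapsto\sum_j\nu_j|u_j|^2$ is linear, and \eqref{eq:spectral_cluster_bounds-Holder} asserts that $\|T\|_{\ell^{\alpha(q)}\to L^{q/2}(M)}\le C\lambda^{2\delta_s(q)}$. On each of $[2,q_n]$ and $[q_n,\infty]$ the quantities $1/\alpha(q)$, $2/q$ and $\delta_s(q)$ are affine in $1/q$ and agree at $q=q_n$, so by Riesz--Thorin interpolation of $T$ it suffices to treat $q\in\{2,q_n,\infty\}$. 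The case $q=2$ is immediate from orthonormality (with $C=1$). The case $q=\infty$ reads $\|\sum_j\nu_j|u_j|^2\|_{L^\infty}\le C\lambda^{n-s}\sup_j|\nu_j|$, and since $\sum_j|u_j(x)|^2\le\Pi_\lambda(x,x)$ it reduces to the pointwise bound $\sup_{x\in M}\Pi_\lambda(x,x)\le C\lambda^{n-s}$ on the (fat-window) spectral function --- equivalently the single-function bound $\|u\|_{L^\infty}\le C\lambda^{(n-s)/2}\|u\|_{L^2}$; this falls out of the parametrix analysis below and is also implicit in \cite{KochSmithTataru-2007-holder}.

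It remains to treat $q=q_n$, where $\alpha(q_n)=\tfrac{n+1}{n}$. By the Frank--Sabin duality principle \cite{FrankSabin-2017rest}, the bound \eqref{eq:spectral_cluster_bounds-Holder} at $q=q_n$ is equivalent to the Schatten estimate
\begin{align*}
\|\Pi_\lambda\,M_W\|_{\schatten^{2(n+1)}}\le C\,\lambda^{\delta_s(q_n)}\,\|W\|_{L^{n+1}(M)},\qquad W\in L^{n+1}(M),
\end{align*}
where $M_W$ denotes multiplication by $W$; equivalently, by $TT^*$, to $\|\Pi_\lambda M_V\Pi_\lambda\|_{\schatten^{n+1}}\le C\lambda^{2\delta_s(q_n)}\|V\|_{L^{(n+1)/2}(M)}$ for $V\ge 0$. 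This is the Schatten-norm form of a spectral cluster estimate at the Stein--Tomas exponent.

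First I would replace $\Pi_\lambda$ by a half-wave parametrix: after smoothing the cut-off and Fourier inversion, $\Pi_\lambda$ equals, up to harmless errors, $\tfrac1{2\pi}\int\widehat\psi(t)\,e^{it\sqrt P}\,dt$ composed with a localization to $\sqrt P\sim\lambda$, where $\psi$ is a bump of width $\lambda^{1-s}$ at $\lambda$, so that the $t$--integral is effectively over $|t|\lesssim\lambda^{s-1}$. On this very short time scale --- all one can hope for, since for $s<1$ the metric $\g$ is not Lipschitz and its bicharacteristic flow need not be defined for $O(1)$ times --- I would insert the wave-packet parametrix of Koch--Smith--Tataru \cite{KochSmithTataru-2007-holder} for $e^{it\sqrt P}$, built after mollifying $\g$ at the scale forced by $C^s$ regularity: modulo errors negligible in Schatten norm, $e^{it\sqrt P}$ is a superposition of curved wave packets transported along the mollified flow, and carrying out the $t$--integration by stationary phase yields a wave-packet representation of $\Pi_\lambda$ itself. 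Then I would bound the $\schatten^{2(n+1)}$ norm of this parametrix composed with $M_W$ by combining (i) kernel bounds for a single wave packet, which after the natural anisotropic rescaling reduce to the oscillatory-integral estimates underlying Sogge's bound at $q=q_n$, with (ii) an almost-orthogonality ($TT^*$/Cotlar--Stein) argument for the off-diagonal interactions, carried out in the $\schatten^{2(n+1)}$ norm rather than the operator norm, as in Frank--Sabin's treatment of the smooth case \cite{FrankSabin-2017clust} and in the $C^{1,1}$ argument behind Theorem~\ref{thm:main-result_C2} (cf.\ \cite{Smith-2006-C11}). Tracking the mollification and rescaling parameters reproduces exactly the loss, relative to the $C^{1,1}$ bound, recorded in $\delta_s(q_n)$ --- the Schatten-norm counterpart of the single-function loss of \cite{KochSmithTataru-2007-holder}.

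The hard part will be step (ii): pushing the orthonormal/Schatten bookkeeping through a low-regularity parametrix. For a smooth metric the parametrix is a clean Fourier integral operator and the Frank--Sabin scheme applies directly; here the wave packets are controlled only after mollification, so one must (a) show that the mollification and coefficient errors are small in $\schatten^{2(n+1)}$ --- not merely in operator norm --- uniformly in $\lambda$, and (b) run the off-diagonal sum genuinely in $\schatten^p$ with $p=2(n+1)>2$, where the triangle inequality is lossy and one cannot retreat to Hilbert--Schmidt orthogonality. A subsidiary but delicate point is that the fat window $[\lambda,\lambda+\lambda^{1-s}]$ must introduce no loss beyond $\delta_s$: the short time scale $|t|\lesssim\lambda^{s-1}$ it forces has to be exactly matched to the mollification scale dictated by $C^s$ regularity, so that the two effects add rather than compound. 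Granting the $q=q_n$ Schatten estimate and the $q=\infty$ bound, the rest of the proof is the soft interpolation of the first paragraph.
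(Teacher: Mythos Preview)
Your reduction to the three endpoints $q\in\{2,q_n,\infty\}$ via Riesz--Thorin, the handling of $q=2$, and the derivation of $q=\infty$ from the pointwise spectral-function bound all match the paper (see Section~\ref{sec:proof-Linfty-estimates} and Section~\ref{subsect. remarks on the proof}). The divergence is entirely at $q=q_n$.

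Your plan there --- smooth the projector via $\int\widehat\psi(t)e^{it\sqrt P}\,dt$ and feed in a wave-packet parametrix for the half-wave flow --- is the Frank--Sabin template transplanted to low regularity. The paper takes a genuinely different route and, in fact, explicitly discusses in Section~\ref{subsect. remarks on the proof} why the wave-equation/square-function approach runs into trouble for orthonormal systems: Littlewood--Paley pieces $P_k\gamma P_k$ cannot be summed in $\schatten^{\alpha}$ for $\alpha>1$, which blocks the frequency-localized parametrix argument from reassembling. Your honest flag about step~(ii) --- running almost-orthogonality in $\schatten^{2(n+1)}$ rather than in operator or Hilbert--Schmidt norm --- is exactly this obstacle, and you leave it unresolved.

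The paper sidesteps it entirely. Instead of the wave equation it uses the \emph{resolvent/quasimode} formulation (Proposition~\ref{prop. equivalence of cluster, qm and resolvent}): the cluster bound at $q_n$ is equivalent to a Schatten bound on $W(P-(\lambda\pm i\lambda^{1-s})^2)^{-1}$. After localizing and conically microlocalizing, the second-order operator is \emph{factored} into $e^w(D_{x_1}-a^w)$, with $x_1$ playing the role of a fictitious time. The key analytic input is then a single pointwise dispersive kernel bound for the \emph{first-order} propagator $S(t,r)$ (Koch--Tataru, \cite[Prop.~4.7]{KochTataru-2005}), from which the needed $\schatten^{n+1}$ bound follows by complex interpolation between $\schatten^2$ (Hilbert--Schmidt from the kernel) and $\schatten^\infty$ (unitarity), and the $t$-integration is closed by the multilinear Hardy--Littlewood--Sobolev inequality (Lemma~\ref{lemma KT05 type}). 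No Cotlar--Stein in $\schatten^p$ is needed.

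The passage from $C^{1,1}$ down to $C^s$ is then purely by \emph{rescaling cascades}: for $s\in[1,2)$ one rescales cubes of size $R=\lambda^{-(2-s)/(2+s)}$ so the smoothed metric looks $C^{1,1}$ at the new frequency, proves no-loss bounds on slabs via an energy/flux estimate, and sums the $R^{-1}$ slabs (Section~\ref{sec:proof-Lip-case}). For $s\in[0,1)$ one first rescales by $T=\lambda^{-(1-s)}$ so the metric looks Lipschitz, applies the Lipschitz result, and sums again (Section~\ref{sec:proof-Holder-case}); since the coefficients are not differentiable, all of this is done via quadratic forms and the form-bounded perturbation identity~\eqref{Tiktopoulos' Formula}. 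The exponent $\delta_s(q_n)$ emerges from bookkeeping the two summations.

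In short: your outline is a plausible alternative but stalls precisely at the Schatten almost-orthogonality step you yourself flag; the paper's proof avoids that step altogether by working with resolvents, a first-order factorization, and the Koch--Tataru dispersive bound plus multilinear HLS, and handles the regularity loss by a two-stage rescaling reduction to the $C^{1,1}$ case rather than by building a low-regularity half-wave parametrix directly.
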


\begin{remark}
The exponents of $\lambda$ in Theorems \ref{thm:main-result_C2}--\ref{thm:main-result_Holder} are the same as in the one-function bounds \cite{Smith-2006-C11, Smith-2006sharp, KochSmithTataru-2007-holder}. The constants depend on the manifold $M$, the non-degeneracy of the metric $\g$ and its $C^s$ norm, but are independent of $\nu_j$, $u_j$ and $\lambda$.
\end{remark}

\subsection{Operator formulation}\label{subsec.reformulation-results}
As explained in \cite{FrankSabin-2017rest,Nguyen-2022}, bounds for systems of orthonormal functions like \eqref{Frank-Sabin orthoclusters} or \eqref{eq:spectral cluster bounds} can be expressed 
 more conveniently using operators. While the one-function estimate \eqref{eq:Sogge} is equivalent to
\begin{align}\label{eq:orthoclusters-1body}
    \norm{\Pi_\lambda}_{L^2(M)\to L^q(M)}\leq C\lambda^{\delta(q)},
\end{align}
the estimate \eqref{Frank-Sabin orthoclusters} for systems of orthonormal functions may be restated as follows (cf. \cite[Theorem 3]{FrankSabin-2017clust}). If $(u_j)_{j\in J}\subset E_{\lambda}$ is an orthonormal system and $(\nu_j)_{j\in J}\subset\C$ are coefficients, we consider the operator
\begin{align}\label{gamma}
	\gamma:=\sum_{j\in J}\nu_j|u_j\rangle\langle u_j|,
\end{align}
where we used Dirac's notation $(|u\rangle\langle v|)f:=\langle v,f\rangle u$, for all $u,v,f\in L^2(M)$. Then 
\begin{equation}\label{eq:orthoclusters-Mbody}
\normLp{\rho_\gamma}{q/2}{(M)}\leq C\lambda^{2\delta(q)}\normSch{\gamma}{\alpha(q)}{(L^2(M))},
\end{equation}
where $\rho_{\gamma}$ is called the density, and the norm on the right is a Schatten norm (see Section \ref{subsec:Schatten} for definitions), 
\begin{align*}
	\rho_\gamma(x):=\sum_{j\in J}\nu_j|u_j(x)|^2
	,\quad
	\normSch{\gamma}{\alpha(q)}{(L^2(M))}:=\bigg(\sum_{j\in J}\abs{\nu_j}^{\alpha(q)}\bigg)^{1/\alpha(q)}.
\end{align*}
By Cauchy--Schwarz, one may always assume without loss of generality that $\nu_j\geq 0$. Note that $\gamma=\Pi_\lambda\gamma \Pi_\lambda$, which implies that $\gamma$ has finite rank (because $\Pi_{\lambda}$ does).
We also remark that, in the case of smooth metrics, $\rho_{\gamma}$ is a smooth function (by elliptic regularity) and hence is pointwise defined. 
Using \cite[Prop. 1]{FrankSabin-2017rest}, one sees that \eqref{Frank-Sabin orthoclusters} is also equivalent to the estimate
\begin{align}\label{dual estimate in Frank-Sabin}
    \|W \Pi_\lambda W\|_{\mathfrak{S}^{\alpha(q)'}(L^2(M))} \leq C \lambda^{2\delta(q)} \|W\|^2_{L^{2(q/2)'}(M)}, 
\end{align}
that holds for any $W \in L^{2(q/2)'}(M)$, for some $C > 0$ independent of $W$ and  $\lambda > 1$. 
The operator formulation of \eqref{eq:spectral cluster bounds} is 
analogous, with $\delta(q)$ in \eqref{eq:orthoclusters-Mbody}, \eqref{dual estimate in Frank-Sabin} replaced by $\delta_s(q)$ and $\lambda\pm\I$ by $\lambda\pm\I\lambda^{(1-s)_+}$. 
In the following discussion, we will use \eqref{eq:orthoclusters-Mbody} as a reference, with the understanding that all the arguments work equally well if we replace $\delta(q)$ by $\delta_s(q)$.

Besides the dual formulation \eqref{dual estimate in Frank-Sabin}, another useful observation is that \eqref{eq:orthoclusters-Mbody} would follow from the \emph{quasimode bound}
\begin{align}\label{qm bound intro}
    \|\rho_{\gamma}\|_{L^{q/2}(M)}\lesssim\lambda^{2\delta(q)}\left(\normSch{\gamma}{\alpha(q)}{(L^2(M))}+\lambda^{-2}\normSch{(P-\lambda^2)\gamma(P-\lambda^2)}{\alpha(q)}{(L^2(M))}\right)
\end{align}
(see Lemma \ref{lemma gradient term}), where we assume that $\Ran(\gamma)\subset\dom(P)$. However, we do not assume that $\gamma=\Pi_{\lambda}\gamma \Pi_{\lambda}$ here (in contrast to \eqref{eq:orthoclusters-Mbody}). This means that $\gamma$ may have high-frequency components, and hence $q$ needs to be sufficiently small (dictated by Sobolev embedding or its analogue for operators, see Proposition \ref{prop:elliptic}, which gives $q\leq 2n/(n-2)$.) In any dimension, 
 $q=q_n$ is allowed, which will be sufficient (see Section \ref{subsect. remarks on the proof} below). The quasimode formulation is standard in the one-function case; in the many-function case, a similar version appeared in the work of the second author \cite{Nguyen-2022}. 
 The bound \eqref{qm bound intro} may be restated as a \textit{resolvent estimate},
 \begin{align}\label{resolvent bound intro}
     \big\|\rho\left((P-(\lambda+\I)^2)^{-1}\gamma (P-(\lambda+\I)^2)^{-1}\right)\big\|_{L^{q/2}(M)}\lesssim\lambda^{2\delta(q)-2}\normSch{\gamma}{\alpha(q)}{(L^2(M))}
 \end{align}
 (see Remark \ref{remark equivalence with A}) where we used the notation $\rho(A):=\rho_A$. By the duality principle of Frank and Sabin \cite[Lemma 3]{FrankSabin-2017rest} (see also Subsection \ref{subsection duality}), the latter is equivalent to the following bound,
 \begin{align}\label{dual resolvent bound intro}
\normSch{W(P-(\lambda+\I)^2)^{-1}(P-(\lambda-\I)^2)^{-1}\overline{W}}{\alpha(q)'}{(L^2(M))} \lesssim\lambda^{2\delta(q)-2}\|W\|^2_{L^{2(q/2)'}(M)}
 \end{align}
 for all $W \in L^{2(q/2)'}(M)$,
 which is essentially what we will prove.

\subsection{Remarks on the proof}\label{subsect. remarks on the proof}
We first note that the case $q=2$ in \eqref{eq:spectral cluster bounds} is trivial since $\|\rho_{\gamma}\|_{L^1}=\|\gamma\|_{\mathfrak{S}^1}$ and $\delta(2)=0$, $\alpha(2)=1$. As in \cite{FrankSabin-2017clust,Nguyen-2022}, the $L^\infty$ estimates in \eqref{eq:spectral cluster bounds} follow from the corresponding one-function bounds (see Proposition \ref{Prop.Linfty}). Thus, as in \cite{FrankSabin-2017clust,Smith-2006-C11,Smith-2006sharp}, we only need to consider the critical exponent $q=q_n$, for which $\delta(q_n)=1/q_n$. The bounds for the other values of $q$ are obtained by linear interpolation. 
Despite these similarities, there are some important differences in our proof compared to \cite{FrankSabin-2017clust}, as we will now explain in more detail. Their approach starts by replacing $\Pi_{\lambda}$ by a ``smooth projection'' $\chi(\sqrt{-\Delta_{\g}}-\lambda)$ (see \cite[(18)]{FrankSabin-2017clust}). They then use the Hadamard parametrix to represent the smooth projection as an oscillatory integral operator modulo a smoothing operator. Finally, they apply a generalization of the Carleson--Sj\"olin theorem \cite[Theorem 1]{FrankSabin-2017clust} to prove \eqref{dual estimate in Frank-Sabin}. An analogue of this approach in the one-function case can be found, e.g., in \cite[Chapter 5]{MR3645429}.

Our approach is closer to that of \cite{Smith-2006-C11, Smith-2006sharp, KochSmithTataru-2007-holder} (whose results we generalize in Theorem~\ref{thm:main-result_C2}, Theorem~\ref{thm:main-result_Lip} and Theorem~\ref{thm:main-result_Holder}). As in the one-function case considered in these works, the foundation of our result is the $C^{1,1}$ bound (Theorem \ref{thm:main-result_C2}). We will comment on this case momentarily, after discussing the bounds for $s<2$, and we recall that we consider $q=q_n$ throughout.

 Estimates for $s\in [1,2)$ are derived from the $C^{1,1}$ result together with a frequency-dependent scaling argument and an energy flux estimate of Smith~\cite[equation (15)]{Smith-2006sharp}. On the $R =\lambda^{-(2-s)/(2+s)}$ scale, $C^s$ coefficients are well approximated by $C^{1,1}$ coefficients, which leads to no-loss estimates on cubes of size $R$, as in \eqref{small_ball_estimates-Lip}, and to spectral cluster bounds~\eqref{eq:spectral cluster bounds} on the whole manifold with an $R^{-1/q_n}$ loss compared to the smooth case~\eqref{Frank-Sabin orthoclusters}. The loss arises from summing over roughly $R^{-1}$ slabs of the form $S_R=\{|x_1-c|\leq R\}$, which appear in the energy flux estimates and on which one also has no-loss $L^{q_n}$ bounds. The appearance of $x_1$ as a privileged direction (by a partition of unity and a linear change of coordinates, there is no loss of generality in fixing this direction) comes from an angular localization argument that reduces the Helmholtz operator $-\Delta_{\g}-\lambda^2$ to a frequency-localized first-order pseudodifferential operator (see Section \ref{sec:proof-C2-case}).

For $s<1$, the counterexample of Koch, Smith, and Tataru \cite[Section 4]{KochSmithTataru-2007-holder} shows that eigenfunctions can be exponentially localized to tubes of diameter
$\lambda^{-2/(2+s)}$ and length $\lambda^{s-1}$, so there can be no energy flux bounds above the $T=\lambda^{s-1}$ scale. However, at or below this scale, $C^s$ coefficients are well approximated by Lipschitz ones, which leads to bounds over slabs $S_R$ with a $T^{-1/2}$ loss (see \cite[equation (15)]{KochSmithTataru-2007-holder}). Summing over $TR^{-1}$ disjoint slabs $S_R$ intersecting a given cube of sidelength $T$ and then summing over a cover of $M$ by such cubes yields the $(TR^{-1})^{1/q_n}T^{-1/2}$ loss in~\eqref{eq:spectral cluster bounds}.

We now comment on the $C^{1,1}$ bound. Smith \cite{Smith-2006-C11} obtained the one-function bound as a consequence of square-function (i.e.\ $L^q_xL^2_t$) bounds for the wave equation $(\partial_t^2-\Delta_{\g})u=0$, in the spirit of Mockenhaupt--Seeger--Sogge \cite{MR1173929}. We have not been able to use this approach for orthonormal systems due to an inefficiency of Littlewood--Paley theory in this case. 
Essentially, if $\gamma_k:=P_k\gamma P_k$, where $P_k$ denotes a smooth Littlewood--Paley projection to frequencies of size $2^k$, one is faced, among other things, with estimating expressions like
\begin{align}\label{expression square function bounds}
\sum_k2^{2\delta(q)k}\|\gamma_k\|_{\mathfrak{S}^{\alpha(q)}(L^2(M))},
\end{align}
where $\gamma$ is the operator associated to an orthonormal system of initial data in $H^{\delta(q)}(M)$. For a single initial datum, $\gamma=|f\rangle\langle f|$, this expression reduces to $\|f\|^2_{H^{\delta(q)}(M)}$, and this is one of the reasons why the proof of the main result of \cite{Smith-2006-C11} can be reduced to a frequency-localized estimate \cite[equation (2.12)]{Smith-2006-C11}. However, for a nontrivial orthonormal system consisting of several functions, \eqref{expression square function bounds} cannot be bounded by $\|\gamma\|_{\mathfrak{S}^{\alpha(q)}(H^{\delta(q)}(M))}$ unless $\alpha(q)\leq 1$, which for $\alpha(q)$ as in \eqref{eq-def:alpha} is only true for the trivial case $q=2$. This means that, on the level of operators, a frequency-localized bound analogous to \cite[equation (2.12)]{Smith-2006-C11} cannot readily be summed to yield a global bound. Studying square function bounds for orthonormal systems would be an interesting topic in itself.

The alternative strategy we employ here, while similar in many aspects to \cite{Smith-2006-C11}, does not rely directly on the wave equation. Although square-function bounds are extremely powerful and yield spectral cluster estimates as an immediate corollary (see \cite{Smith-2006-C11}), the latter can also be derived using only the stationary equation. Note, however, that even in the stationary case, locally in phase space, one can always single out a distinguished space direction that plays the role of (fictional) time.
In the single-function setting, this approach is standard and has been used by many authors. For instance, Koch and Tataru \cite{KochTataru-2005} applied it to establish microlocal $L^q$ estimates for quasimodes of pseudodifferential operators, including non-self-adjoint ones. Semiclassical analogues in the self-adjoint case were later obtained by Koch, Tataru, and Zworski \cite{Koch-Tataru-Zworski-2007}. 
In the single-function setting, if the metric is smooth, one can use Fourier integral operators to construct a parametrix (see, e.g., \cite{MR1173929}). Smith \cite{MR1644105,Smith-2006-C11} and Tataru \cite{MR1749052,MR1833146,MR1887639} used wave packet techniques to construct parametrices for regularized wave operators.
The smoothing errors can be treated as forcing terms, and the equation can be solved iteratively. The wave packet parametrices require roughly two derivatives on the coefficients as this ensures uniqueness of the bicharacteristic flow. This approach works equally well for both time-dependent and stationary problems but has not been used before in the context of orthonormal systems.

The key estimate in our proof of the $C^{1,1}$ result is the pointwise kernel bound \eqref{Dunford--Pettis} for the propagator of a regularized, frequency-localized, first-order pseudodifferential operator. This bound follows from a special case of the results of Koch and Tataru \cite[Proposition 4.7]{KochTataru-2005} and the Dunford--Pettis theorem \cite[Theorem 2.2.5]{Dunford-Pettis-1940-Linear}.

\subsection{Remarks on optimality}

There are two aspects of optimality to be considered. First, one can ask whether the exponent $\delta_s(q)$ in the one-function estimate is optimal (smallest possible). Second, if this exponent is optimal, one can ask whether the Schatten exponent~$\alpha(q)$ is optimal (largest possible). We already discussed the first aspect, and we focus now on the second aspect. Since $\dim E_{\lambda}\approx \lambda^{n-1}$ by the sharp Weyl law, the Schatten exponent may always be increased at the expense of increasing $\delta_s(q)$. This is the reason we fix (the optimal) exponent $\delta_s(q)$ as in the one-function bounds.

As explained earlier, for $C^{1, 1}$ coefficients, the exponent~$\delta(q)$ is the same as in the smooth case, hence best possible by the examples in \cite[Section~5.1]{MR3645429}. 
For the Laplacian on the two-dimensional sphere with the standard round metric, optimality of the Schatten exponent $\alpha(q)$ was shown in \cite[Theorem 4]{FrankSabin-2017clust}. They showed optimality in a strong sense, namely, for any fixed ratio $\#J/\lambda^{n-1}$ of the number of functions to the maximal number of orthonormal functions in the cluster $E_{\lambda}$, inequality \eqref{Frank-Sabin orthoclusters} is saturated. There is strong evidence that $\alpha(q)$ is sharp in any dimension $n\geq 2$ and on any compact manifold with smooth metric \cite{FrankSabinpersocomm}. We thus expect that Theorem \ref{thm:main-result_C2} is also optimal in this strong sense.

When $0\leq s<2$, the picture is much less clear. As already mentioned, the one-function bounds in Theorems \ref{thm:main-result_Lip}--\ref{thm:main-result_Holder} (and hence the exponent $\delta_s(q)$) are sharp for $2\leq q\leq q_n$. Therefore, it is relevant to ask whether the Schatten exponent is optimal for this range of $q$. For the sake of this discussion, let us denote the (unknown) optimal Schatten exponent by $\alpha_s(q)$ (our bounds hold with $\alpha_s(q)=\alpha(q)$ independent of $s$). When the number of functions is maximal ($\#J=\dim E_{\lambda}$), the sharp pointwise Weyl law\footnote{To be precise, the sharp pointwise Weyl law holds for $s>1$ by results of Bronstein and Ivrii \cite{MR1974450} (they only require that the first derivatives of the coefficients are Dini--Lipschitz continuous). To the best of our knowledge, the Lipschitz case ($s=1$) is still open.} stipulates that the density is asymptotically equal to $c\lambda^{n-1}$, and hence all of its $L^{q/2}$ norms are comparable. This leads to the necessary condition 
\begin{align}\label{nec. cond. Weyl}
   2\delta_{s}(q) +\frac{n-1}{\alpha_s(q)} \geq  n-1.
\end{align}
Equality in \eqref{nec. cond. Weyl} holds when $\delta_s(q)$ and $\alpha_s(q)$ are replaced by $\delta(q)$ and $\alpha(q)$, respectively. However, since there are losses in the one-function bounds, we have $\delta_s(q)>\delta(q)$, and thus equality in~\eqref{nec. cond. Weyl} would require $\alpha_s(q)>\alpha(q)$. Presently, we do not know if and how the Schatten exponents in Theorems \ref{thm:main-result_Lip} and \ref{thm:main-result_Holder} can be improved. Restricting our attention to the case $s=1$ for the sake of discussion, we point out that \eqref{small_ball_estimates-Lip} and Lemma \ref{lemma rescaling} are still optimal whenever the $C^{1,1}$ bounds are, see Remark \ref{lemma Lip cube sharp}. However, the summation argument \eqref{mess} in Lemma \ref{lemma:energy_estimates_slabs-to-cube}
is probably lossy, see Remark \ref{remark loss in the summation argument}. 

On the flat torus or an open subset of Euclidean space, the optimality of the Schatten exponent can be easily tested by wave packet examples. Considering $n=2$ for simplicity, one such example is when each $u_j$ is a single wavepacket, essentially supported on a $1\times\lambda^{-1/2}$ tube $T$ intersecting the origin. The tubes are $\lambda^{-1/2}$-separated in angle, and there are $\#T\approx\lambda^{1/2}$ many of them. The left-hand side of \eqref{Frank-Sabin orthoclusters} (with all $\nu_j=1$) is then bounded from below by $$ |T|^{-1}\#T|B(0,\lambda^{-1/2})|^{2/q}\approx \lambda^{1-2/q}=\lambda^{2\delta(q)}(\#T)^{1/\alpha(q)},\quad\forall q\geq 6.$$
For $q=6$ (the critical exponent $q_n$ in two dimensions), the lower bound also holds for any intermediate number of functions $1\leq \#T\leq \lambda^{1/2}$. If we repeat this argument with the $1\times \lambda^{-2/3}$ wavepackets constructed in \cite{MR1306017}, then we get a lower bound that matches the upper bound in Theorem \ref{thm:main-result_Lip} for $q\geq 6$. However, the wavepackets in \cite{MR1306017} all point in the $x_1$ direction since they correspond to a special Lipschitz singularity of the type $|x_2|$. We do not know whether more complicated Lipschitz singularities could produce wave packets in all directions and thus justify the above argument.

For special Lipschitz metrics (or manifolds with boundaries) and for $n=2$, the first and third author  \cite{CSuprep} have established a generalization of \cite{MR2316270} to systems of orthonormal functions. These results improve upon the bounds in Theorem \ref{thm:main-result_Lip} in that the exponent of $\lambda$ is smaller than $\delta_1(q)$ for $q>6$ (and equal to $\delta(q)$ for $q>8$), and the Schatten exponent is larger than $\alpha(q)$ for $q\leq 8$. For instance, for $q=6$, the Schatten exponent is $9/5$, whereas $\alpha(6)=3/2$. It is interesting that in the one-function case, the counterexamples for special versus general Lipschitz coefficients are the same, whereas for systems of orthonormal functions this does not seem to be the case. It is an interesting open question whether our Schatten exponents for $s<2$ can be improved, or whether stronger counterexamples can be constructed.

\subsection{Organization of the paper}
We start recalling in  Section \ref{sec:prelim} some standard facts about quadratic forms and operators, Schatten spaces, densities and a duality principle. We also prove that the $L^{\infty}$ bounds are a consequence of the corresponding one-function bounds (\ref{Prop.Linfty}).
In Section \ref{sec:connect-spectral_cluster-resolvent}, we explain the link between clusters estimates and other types of bounds that are more convenient to deal with in our proof. Then, we prove respectively Theorems \ref{thm:main-result_C2}, \ref{thm:main-result_Lip} and \ref{thm:main-result_Holder} in Sections \ref{sec:proof-C2-case}, \ref{sec:proof-Lip-case} and \ref{sec:proof-Holder-case}.

\subsection*{Notation}
For $s\in (0,1)$, $C^s(M)$ denotes the class of H\"older continuous functions on $M$ (bounded for $s=0$ and Lipschitz for $s=1$). For $s\in(1,2]$, we set $C^s(M)=C^{1,s-1}(M)$, the class of functions whose first derivative is in $C^{s-1}(M)$. These spaces are endowed with the norms
\begin{align*}
    \norm{f}_{C^s(M)}:= \norm{f}_{L^{\infty}(M)}+
    \begin{cases}
    \sup_{\abs{h}\leq 1}\abs{h}^{-s}\norm{f(\cdot+h)-f}_{L^{\infty}(M)} &\text{if}\ s\in(0,1],\\
\norm{df}_{L^{\infty}(M)}+\sup_{\abs{h}\leq 1}\abs{h}^{1-s}\norm{df(\cdot+h)-df}_{L^{\infty}(M)} &\text{if}\ s\in(1,2],
    \end{cases}
\end{align*}
and $\norm{f}_{C^0(M)}:= \norm{f}_{L^{\infty}(M)}$.    
The one form $df$ is identified with the vector field $\nabla_{\g}f$ via the Riemannian metric in the usual way, i.e., $df(X)=\g(\nabla_{\g}f, X)$ for any vector field $X$. We say that a metric $\g$ on $M$ is of class $C^s$ if the coefficients $\g_{ij}$ in any local coordinate system are $C^s$. The divergence with respect to the metric is $-\nabla\mathrlap{^*}_{\textup\g}$, and $\nabla\mathrlap{^*}_{\textup\g}\cdot\nabla_{\g}=-\Delta_{\g}$. In local coordinates, $(\nabla_{\g}u)^i=\g^{ij}\partial_ju$ and $\nabla\mathrlap{^*}_{\textup\g}X=|\g|^{-1/2}\partial_i(|\g|^{1/2}X^i)$.
The inner product on $L^2(M)$ is defined as $\langle u,v\rangle:=\int_M \overline{u}v\,|\g|^{1/2}\rd x$.
We adopt the summation convention: repeated indices (one index a subscript, the other one a superscript) are understood to be summed over. 
The notation $A\lesssim B$ indicates $|A|\leq CB$, where the implicit constant $C$ depends only on fixed quantities.
We call $\phi$ a \emph{bump function} adapted to a given set if it is a smooth, compactly supported function, equal to one on that set, and supported on a dilate of the same set.

\section{Preliminaries}\label{sec:prelim}

In this section, we first prove the $L^{\infty}$ bounds. We then recall some well-known facts regarding quadratic forms and operator domains, basic notions of density operators and Schatten spaces, and a duality principle due to Frank and Sabin. We also include some details on the pseudodifferential calculus we use here.

\subsection{The $L^{\infty}$ bound and the case $s=0$}\label{sec:proof-Linfty-estimates}

Smith \cite{Smith-2006sharp} (for $s\in [1,2]$) and Koch--Smith--Tataru \cite{KochSmithTataru-2007-holder} (for $s\in[0,1)$) proved the sharp bounds
\begin{align}\label{Linfty bounds low regularity}
    \|\Pi_{\lambda}\|_{L^2(M)\to L^{\infty}(M)}\lesssim 
    \begin{cases}
        \lambda^{\frac{n-1}{2}} &\text{for }s\in [1,2],\\
        \lambda^{\frac{n-s}{2}} &\text{for } s\in[0,1).
    \end{cases}
\end{align}
These bounds directly imply the case $q=\infty$ in Theorems \ref{thm:main-result_C2}--\ref{thm:main-result_Holder}, as the following abstract result shows.

\begin{proposition}\label{Prop.Linfty}
 Let $(X,\rd x),(Y,\rd y)$ be measure spaces, and let $T:L^2(Y)\to L^\infty(X)$ be a bounded operator with continuous integral kernel. 
If $J$ is a countable index set, $(u_j)_{j\in J}$ an orthonormal system in $L^2(Y)$ and $(\nu_j)_{j\in J}\subset \C$, then
\begin{equation*}
    \|\sum_{j\in J} \nu_j|Tu_j|^2\|_{L^{\infty}(X)}\leq \|T\|_{L^2(Y)\to L^{\infty}(X)}^2\sup_{j\in J} |\nu_j|.
\end{equation*}
\end{proposition}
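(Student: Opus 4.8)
The plan is to exploit the pointwise nature of the $L^\infty$ estimate, which makes orthonormality enter through nothing deeper than Bessel's inequality. Fix $x\in X$. Since $T$ has a continuous integral kernel $K(x,y)$ and maps $L^2(Y)\to L^\infty(X)$ boundedly, the linear functional $f\mapsto (Tf)(x)$ is bounded on $L^2(Y)$ with norm at most $\|T\|_{L^2\to L^\infty}$; by Riesz representation, $(Tf)(x)=\langle k_x,f\rangle$ for some $k_x\in L^2(Y)$ with $\|k_x\|_{L^2(Y)}\leq \|T\|_{L^2(Y)\to L^\infty(X)}$. Concretely $k_x(y)=\overline{K(x,y)}$, but one does not even need that.

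Next I would write, for this fixed $x$,
\begin{equation*}
    \sum_{j\in J}\nu_j\,|(Tu_j)(x)|^2 = \sum_{j\in J}\nu_j\,|\langle k_x,u_j\rangle|^2,
\end{equation*}
and estimate in absolute value by
\begin{equation*}
    \bigg|\sum_{j\in J}\nu_j\,|\langle k_x,u_j\rangle|^2\bigg| \leq \Big(\sup_{j\in J}|\nu_j|\Big)\sum_{j\in J}|\langle k_x,u_j\rangle|^2 \leq \Big(\sup_{j\in J}|\nu_j|\Big)\,\|k_x\|_{L^2(Y)}^2,
\end{equation*}
where the last inequality is Bessel's inequality applied to the orthonormal system $(u_j)_{j\in J}$ in $L^2(Y)$. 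Combining with the bound on $\|k_x\|_{L^2(Y)}$ gives
\begin{equation*}
    \bigg|\sum_{j\in J}\nu_j\,|(Tu_j)(x)|^2\bigg| \leq \|T\|_{L^2(Y)\to L^\infty(X)}^2\,\sup_{j\in J}|\nu_j|,
\end{equation*}
and since the right-hand side is independent of $x$, taking the essential supremum over $x\in X$ yields the claim.

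There is no serious obstacle here; the only points requiring a word of care are (i) that the series $\sum_j \nu_j |(Tu_j)(x)|^2$ converges absolutely for each $x$, which is immediate from Bessel's inequality and $\sup_j|\nu_j|<\infty$, and (ii) the measurability of $x\mapsto \sum_j\nu_j|(Tu_j)(x)|^2$ so that the $L^\infty$ norm is meaningful — this follows since each $Tu_j$ is continuous (kernel is continuous) hence measurable, and the series converges. The continuity-of-kernel and $L^2\to L^\infty$ boundedness hypotheses are exactly what is needed to produce the representing vector $k_x$ uniformly in $x$; everything else is Bessel.
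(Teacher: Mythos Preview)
Your proof is correct and follows essentially the same approach as the paper: fix $x$, represent $(Tu_j)(x)$ as an inner product $\langle k_x,u_j\rangle$ in $L^2(Y)$, apply Bessel's inequality, and take the supremum over $x$. The paper works directly with the kernel $K^x(\cdot)=K(x,\cdot)$ rather than invoking Riesz representation, but this is only a cosmetic difference.
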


\begin{proof}
Without loss of generality, we may assume that $\sup_{j\in J} |\nu_j|=1$. Let $K:X\times Y\to\C$ denote the integral kernel of $T$. For fixed $x\in X$, set $K^x(y)=K(x,y)$. Then
    $$
Tu_j(x)=\int_Y K^x(y)u_j(y)\rd y=\langle K^x,u_j\rangle_{L^2(Y)}.
    $$
By Bessel's inequality,
$$
\sum_{j=1}^N \nu_j|Tu_j(x)|^2=\sum_{j=1}^N \nu_j|\langle K^x,u_j\rangle_{L^2(Y)}|^2\leq  \|K^x\|_{L^2(Y)}^2.
$$
Taking the supremum over $x$ yields
\begin{align}
    \sup_{x\in X}\sum_{j=1}^N |Tu_j(x)|^2\leq \sup_{x\in X}\int_Y |K(x,y)|^2\rd y=\|T\|_{L^2(Y)\to L^{\infty}(X)}^2,
\end{align}
completing the proof.
\end{proof}

If $s=0$, then $\delta_0(q)=n(1/2-1/q)$ for $2\leq q\leq \infty$. Hence, in that case, Theorem \ref{thm:main-result_Holder} already follows by interpolating the $L^2$ and the $L^{\infty}$ bound.

\subsection{Quadratic forms and operators}\label{subsec. quad. form}
If $\g$ is smooth, then $-\Delta_g$ is essentially self-adjoint on $C^{\infty}(M)$ (i.e., has a unique self-adjoint extension). In the case of limited regularity, we will define a self-adjoint extension by quadratic form methods. Here, we only assume that the coefficients of the metric are bounded and measurable. Consider the sesquilinear form
\begin{align}
Q(u,v):=\int_M\g^{ij}\partial_i\overline{u}\partial_j v\,|\g|^{1/2}\rd x,\quad u,v\in C^{\infty}(M),
\end{align}
and denote by $Q(u)=Q(u,u)$ the corresponding quadratic form.
Since $(\g_{ij})$ is positive definite, there exists $c_{\rm ell}>0$ such that, in local coordinates near $x\in M$, one has for all $\xi\in \R^n$
\begin{align}\label{ellipticity g}
c_{\rm ell}^{-1}|\xi|^2\leq \g^{ij}(x)\xi_i\xi_j\leq c_{\rm ell} |\xi|^2, 
\end{align}
It follows from standard results (e.g., \cite[Theorem 1.2.6]{MR1103113}) that $Q$ is closable. We will denote the closure again by $Q$. Then there exists a canonical non-negative self-adjoint operator $P$ associated to $Q$, with form domain 
\begin{align}
   \dom(Q)=\dom(\sqrt{P})=\{u\in L^2(M):\int_M\g^{ij}\partial_i\overline{u}\partial_j u\,|\g|^{1/2}\rd x<\infty\}=H^1(M),
\end{align}
and $Q(u)=\|\sqrt{P}u\|_{L^2(M)}^2$,
see, e.g., \cite[Theorem 1.2.1]{MR1103113}, \cite[Theorems VI.2.1, VI.2.6, VI.2.23]{MR407617}.
Here, the derivatives are taken in the weak sense.
The operator domain of $P$ is given by
\begin{align}\label{dom(P)}
    \dom(P)=\{u\in H^1(M):\exists f\in L^2(M) \mbox{ s.t. } 
   Q(u,v)=\langle f,v\rangle, 
   \forall v\in H^1(M)\},
\end{align}
and $Pu=f$, see, e.g., \cite[Theorem 1.2.7]{MR1103113}, \cite[Theorem 2.14]{MR3243083}. If the coefficients $\g^{ij}$ are $C^1$, then $\dom(P)=H^2(M)$, and 
\begin{align}\label{P for Lipschitz}
    Pu=-|\g|^{-1/2}\partial_i(|\g|^{1/2}\g^{ij}\partial_j u)=-\g^{ij}\partial_i\partial_j u-|\g|^{-1/2}\partial_i(|\g|^{1/2}\g^{ij})\partial_j u,\quad \forall u\in H^2(M).
\end{align}
We will call $P$ the Laplace–Beltrami operator (or simply Laplacian) on $M$. By the Rellich–Kondrachov theorem, $P$ has compact resolvent. We denote the sequence of discrete eigenvalues (counted with multiplicity) by $(\lambda_j^2)_{j=1}^{\infty}$ and the sequence of corresponding eigenfunctions by $(e_j)_{j=1}^{\infty}$.
Then  
\begin{align}\label{Pilambda and Elambda of P}
\Pi_{\lambda}:=\mathds{1}(\sqrt{P}\in [\lambda,\lambda+\lambda^{(1-s)_+}]):=\sum_{\lambda_j\in [\lambda,\lambda+\lambda^{(1-s)_+}]}|e_j\rangle\langle e_j|.
\end{align}
The complex square root is defined by the spectral theorem, i.e., 
\begin{align}
(P-z)^{-1/2}=\sum_{j=1}^{\infty}(\lambda_j^2-z)^{-1/2}|e_j\rangle\langle e_j|,
\end{align}
with $\zeta^{1/2}:=|\zeta|^{1/2}\e^{\frac{\I}{2}\arg \zeta}$, $0\leq \arg \zeta<2\pi$, for all $\zeta\in\C\setminus\{0\}$.

\subsection{Schatten spaces}\label{subsec:Schatten}

If $\Hc$ and $\Hc'$ are complex separable Hilbert spaces and $\alpha\in [1,\infty]$, the Schatten space $\schatten^\alpha(\Hc,\Hc')$ is defined as the space of compact linear operators $A:\Hc\to\Hc'$ with norm 
	\begin{equation*}
		\normSch{A}{\alpha}{(\Hc,\Hc')} := \left(\tr_\Hc(A^*A)^{\alpha/2}\right)^{1/\alpha}=\left(\tr_{\Hc'}(AA^*)^{\alpha/2}\right)^{1/\alpha}
	\end{equation*}
if $\alpha<\infty$ and with the operator norm $\|\cdot\|$ if $\alpha=\infty$. 
When $\Hc=\Hc'$, we write $\schatten^{\alpha}(\Hc)$ instead of $\schatten^{\alpha}(\Hc,\Hc')$. We sometimes omit $\Hc,\Hc'$ if the spaces are clear from the context. For background on Schatten spaces, we refer for instance, to \cite{Simon-2005}.

In the following, we consider operators on $\Hc=L^2(M)$ or $L^2(\R^n)$, and the corresponding Schatten spaces.
There are obvious analogues for $L^2(\Omega)$, where $\Omega$ is an open subset of $M$ or $\R^n$, or more generally, an arbitrary $\sigma$-finite measure space.
 
We will repeatedly use 
the triangle and H\"older's inequality in Schatten spaces. The latter states that
\begin{align}\label{Holder ineq. in Schatten}
    \|AB\|_{\mathfrak{S}^{\alpha_1}(\Hc)}\leq \|A\|_{\mathfrak{S}^{\alpha_2}(\Hc)} 
    \|B\|_{\mathfrak{S}^{\alpha_3}(\Hc)}\quad \mbox{for}\quad \alpha_1^{-1}=\alpha_2^{-1}+\alpha_3^{-1}.
\end{align}
We also recall the Kato--Seiler--Simon inequality \cite[Theorem 4.1]{Simon-2005}: 
For $2\leq p\leq \infty$,
    \begin{equation}\label{KSS inequality}
        \normSch{W(x)\beta(-{\mathrm i}\nabla)}{p}{\left(L^2(\R^n)\right)}\leq (2\pi)^{-n/p} \normLp{W}{p}{(\R^n)}\normLp{\alpha}{p}{(\R^n)}.
    \end{equation}

Frank and Sabin \cite[Appendix C]{FrankSabin-2017clust} proved a version of the Kato--Seiler--Simon inequality for the Laplacian on compact manifolds. Here we state a version for low-regularity metrics. 
\begin{proposition}\label{Prop. KSS low reg.}
 Let $s\in[0,2]$, and let $\g$ be of class $C^s(M)$. Then there exists $C>0$ such that for all $2\leq p\leq \infty$,
 \begin{align}
\|W\beta(\sqrt{P})\|_{\mathfrak{S}^{p}(L^2(M))}\leq C\|W\|_{L^{p}(M)}\bigg(\sum_{k=0}^{\infty}\sup_{k\leq \tau\leq k+1}|\beta(\tau)|^p(1+k)^{n-1+(1-s)_+}\bigg)^{1/p}.
 \end{align}
\end{proposition}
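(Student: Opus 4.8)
}

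The plan is to reduce the statement to the flat Euclidean Kato--Seiler--Simon inequality \eqref{KSS inequality} via a partition of unity and a dyadic decomposition in frequency, with the sharp Weyl-type counting of eigenvalues entering through the factor $(1+k)^{n-1+(1-s)_+}$. First I would fix a finite atlas $\{(U_\ell,\kappa_\ell)\}_{\ell=1}^{L}$ of $M$ together with a subordinate smooth partition of unity $\{\chi_\ell\}$, so that by the triangle inequality in $\schatten^p$ it suffices to bound $\|\chi_\ell W\beta(\sqrt P)\|_{\schatten^p(L^2(M))}$ for each $\ell$. Next, introduce a dyadic decomposition $1=\sum_{k\geq 0}\psi_k(\tau)$ adapted to the intervals $[k,k+1]$ (or to dyadic blocks $2^k$ — either works, but the unit-length blocks match the counting factor directly), write $\beta(\sqrt P)=\sum_k \beta(\sqrt P)\psi_k(\sqrt P)$, and use that $\beta\psi_k$ is supported in $\tau\in[k,k+1]$ (up to a harmless overlap) so that $\sup_\tau|(\beta\psi_k)(\tau)|\leq \sup_{k\leq\tau\leq k+1}|\beta(\tau)|$. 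By the triangle inequality in $\schatten^p$ again (using $p\geq 1$), the problem is reduced to the single-block estimate
\begin{align*}
\|\chi_\ell W\,\psi_k(\sqrt P)\|_{\schatten^p(L^2(M))}\lesssim \|W\|_{L^p(M)}\,(1+k)^{(n-1+(1-s)_+)/p},
\end{align*}
and then summing the $p$-th powers (this is exactly where one needs the block structure: $\|\sum_k A_k\|_{\schatten^p}\leq \sum_k\|A_k\|_{\schatten^p}$, but to recover the stated $\ell^p$-type bound one actually wants near-orthogonality of the ranges, so I would instead sum the $p$-th powers by exploiting that the $\psi_k(\sqrt P)$ have essentially disjoint spectral supports, giving $\|\sum_k A_k\|_{\schatten^p}^p \lesssim \sum_k \|A_k\|_{\schatten^p}^p$ up to a fixed overlap constant).

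For the single-block estimate I would transplant to $\R^n$: on the coordinate patch, $\psi_k(\sqrt P)$ has an integral kernel that is microlocally concentrated at frequencies $\sim k$, and writing $\chi_\ell W\psi_k(\sqrt P)=\chi_\ell W\,\tilde\chi_\ell\cdot(\tilde\chi_\ell\psi_k(\sqrt P))$ with a slightly larger cutoff $\tilde\chi_\ell$, one compares $\tilde\chi_\ell\psi_k(\sqrt P)$ with a Fourier multiplier $\tilde\beta_k(-\I\nabla)$ on $\R^n$ whose symbol is supported in an annulus $|\xi|\sim k$ and bounded by $1$. Here the ellipticity bound \eqref{ellipticity g} guarantees that $\sqrt{P}\sim|\xi|$ microlocally so that the frequency support is indeed an annulus of thickness controlled by $k$. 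The Euclidean Kato--Seiler--Simon bound \eqref{KSS inequality} then gives $\|W\tilde\beta_k(-\I\nabla)\|_{\schatten^p(L^2(\R^n))}\lesssim \|W\|_{L^p}\|\tilde\beta_k\|_{L^p(\R^n)}\lesssim \|W\|_{L^p}\,k^{n/p}$ if the block has thickness $\sim k$, or $\lesssim \|W\|_{L^p}\,(k^{n-1}\cdot \text{thickness})^{1/p}$ in general; for $s\geq 1$ the effective thickness of the block $[k,k+1]$ in $\tau=\sqrt P$ is $O(1)$, giving $k^{(n-1)/p}$, whereas for $s<1$ the low regularity of the metric forces one to work with the coarser spectral window $[\lambda,\lambda+\lambda^{1-s}]$, i.e. thickness $\sim k^{1-s}$, producing the extra factor $(1+k)^{(1-s)_+/p}$. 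The passage between $\psi_k(\sqrt P)$ on $M$ and the flat multiplier is the technical heart: one needs the difference to be $\schatten^p$-negligible, which is where I would invoke a parametrix / functional-calculus argument for low-regularity $P$ (e.g. via the results of Frank--Sabin \cite[Appendix C]{FrankSabin-2017clust} adapted to $C^s$ coefficients, or directly via an almost-analytic extension of $\psi_k$ and resolvent bounds).

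The main obstacle I anticipate is controlling the error when replacing the true spectral localizer $\psi_k(\sqrt P)$ by a flat Fourier multiplier in the nonsmooth setting: standard pseudodifferential calculus is unavailable for $C^s$ metrics with $s<2$, so one cannot simply quote symbol-class mapping properties. I would handle this by using only a crude parametrix — writing $\psi_k(\sqrt P)$ through an almost-analytic extension and the resolvent $(P-z)^{-1}$, and estimating the resolvent both in operator norm and, after composing with the cutoff, in $\schatten^p$ via \eqref{KSS inequality} applied to $\chi_\ell(P_0-z)^{-1}$ for the constant-coefficient comparison operator $P_0$, with the metric perturbation absorbed by a Neumann series that converges because the window is wide enough ($\lambda^{1-s}$) relative to the perturbation size. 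A secondary technical point is the summation over $k$: one must be careful that the almost-disjointness of spectral supports survives the localization, so that the $\ell^p$ aggregation (rather than a lossy $\ell^1$ one) is legitimate; this follows because $\psi_k(\sqrt P)\psi_{k'}(\sqrt P)=0$ for $|k-k'|\geq 2$ exactly, and the cutoffs only introduce a fixed finite overlap.
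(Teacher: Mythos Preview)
Your proposed $\ell^p$ aggregation step is incorrect: the inequality $\|\sum_k A_k\|_{\mathfrak{S}^p}^p\lesssim\sum_k\|A_k\|_{\mathfrak{S}^p}^p$ with $A_k=W\beta(\sqrt{P})\psi_k(\sqrt{P})$ fails for $p>2$, even though the $\psi_k(\sqrt{P})$ have exactly disjoint spectral supports. A concrete counterexample: take two eigenvalues in distinct blocks with normalized eigenfunctions $e_1,e_2$ and $\beta=1$ there. Then $AA^*=A_1A_1^*+A_2A_2^*=|We_1\rangle\langle We_1|+|We_2\rangle\langle We_2|$; if $\|We_j\|=1$ and $\langle We_1,We_2\rangle=c\neq 0$, the eigenvalues of $AA^*$ are $1\pm|c|$, so $\|A\|_{\mathfrak{S}^p}^p=(1+|c|)^{p/2}+(1-|c|)^{p/2}>2=\|A_1\|_{\mathfrak{S}^p}^p+\|A_2\|_{\mathfrak{S}^p}^p$ by strict convexity of $t\mapsto t^{p/2}$. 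One-sided orthogonality yields the \emph{reverse} inequality for $p\geq 2$, not the one you need; and falling back on the triangle inequality only gives an $\ell^1$ sum in $k$, strictly weaker than the stated $\ell^p$ bound.

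The paper avoids this issue entirely by applying the Araki--Lieb--Thirring trace inequality $\|W\beta(\sqrt{P})\|_{\mathfrak{S}^p}^p\leq\tr\bigl(|W|^p|\beta(\sqrt{P})|^p\bigr)$ (valid for $p\geq 2$) \emph{before} decomposing. Since $|\beta(\sqrt{P})|^p$ commutes with the spectral projections $\mathbf{1}_{[k,k+1]}(\sqrt{P})$, the trace splits exactly as $\sum_k$, and each summand is bounded by
\[
\sup_{[k,k+1]}|\beta|^p\,\tr\bigl(\mathbf{1}_{[k,k+1]}(\sqrt{P})|W|^p\bigr)\leq \sup_{[k,k+1]}|\beta|^p\,\|W\|_{L^p}^p\,\|\mathbf{1}_{[k,k+1]}(\sqrt{P})\|_{L^2\to L^\infty}^2.
\]
The Smith and Koch--Smith--Tataru bounds \eqref{Linfty bounds low regularity} then give the factor $(1+k)^{n-1+(1-s)_+}$ directly. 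No coordinate charts, no parametrix for $\psi_k(\sqrt{P})$, no comparison with flat Fourier multipliers is needed; the entire nonsmooth input is encapsulated in the known $L^2\to L^\infty$ spectral cluster bound, which makes your proposed resolvent/Neumann-series machinery unnecessary.
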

\begin{proof}
The case $p=\infty$ is trivial, so we focus on $p<\infty$. The proof of \cite[Proposition 24]{FrankSabin-2017clust} carries over with minor modifications. By the Lieb-Thirring inequality (see \cite{FrankSabin-2017clust} for references),
\begin{align}
\|W\beta(\sqrt{P})\|_{\mathfrak{S}^p(L^2(M))}^p&\leq\tr(|W|^p|\beta(\sqrt{P})|^p)=\sum_{k=0}^{\infty}\tr(|W|^p|\beta(\sqrt{P})|^p\mathbf{1}(\sqrt{P}\in[k,k+1]))\\
&\leq \sum_{k=0}^{\infty}\sup_{k\leq\tau\leq k+1}|\beta(\tau)|^p\tr (\mathbf{1}(\sqrt{P}\in[k,k+1])|W|^p)\\
&\leq \|W\|_{L^{p}(M)}^p
\sum_{k=0}^{\infty}\sup_{k\leq\tau\leq k+1}|\beta(\tau)|^p \|\mathbf{1}(\sqrt{P}\in[k,k+1])\|_{L^2(M)\to L^{\infty}(M)}^2.
\end{align}
The claim now follows from the estimates \eqref{Linfty bounds low regularity}.
\end{proof}

\begin{remark}
 For $s<1$, we could also have split the sum over $k$ into clusters of size $k^{1-s}$ since the same bounds hold for the $L^2\to L^{\infty}$ norm of the corresponding projection operator as for clusters of unit size. For instance, if $s=0$, we would then obtain
 \begin{align}
\|W\beta(\sqrt{P})\|_{\mathfrak{S}^{p}(L^2(M))}\leq C\|W\|_{L^{p}(M)}\bigg(\sum_{k=0}^{\infty}\sup_{2^k-1\leq \tau\leq 2^{k+1}}|\beta(\tau)|^p2^{kn}\bigg)^{1/p}.
 \end{align}
However, the result of Proposition \ref{Prop. KSS low reg.} will be sufficient for our purposes.
\end{remark}

When applying Proposition \ref{Prop. KSS low reg.}, we will often need to estimate sums of the following type.

\begin{lemma}\label{lemma sums a,b}
 Let $a,b\geq 0$. Then for $\lambda\geq 2$,   \begin{align}
\sum_{k=0}^{2\lambda}(1+|k-\lambda|)^{-a}(1+k)^b\lesssim
     \begin{cases}
\lambda^{b+1-a}\quad&\mbox{if }a<1,\\
\lambda^{b}\log\lambda\quad&\mbox{if }a=1,\\
\lambda^{b}\quad&\mbox{if }a>1,
     \end{cases}    
 \end{align}
and if $ b - 2a < -1 $, 
\begin{align}
\sum_{k>2\lambda}(k^2+\lambda^2)^{-a}(1+k)^b\lesssim \lambda^{b+1-2a}.    
\end{align} 
\end{lemma}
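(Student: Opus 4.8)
The plan is to estimate each sum by splitting the index range into a region close to $\lambda$ (where $1+|k-\lambda|$ is small) and regions far from $\lambda$ (where $1+k\approx\lambda$ and the decay factor does the work), comparing the resulting dyadic sums with integrals. For the first sum, I would write $\sum_{k=0}^{2\lambda} = \sum_{|k-\lambda|\le \lambda/2} + \sum_{|k-\lambda|>\lambda/2}$. On the far piece, $1+k\asymp\lambda$ while $\sum_{|k-\lambda|>\lambda/2}(1+|k-\lambda|)^{-a}$ is at most $\sum_{m\ge 1}(1+m)^{-a}$, which is $O(\lambda^{1-a})$ if $a<1$, $O(\log\lambda)$ if $a=1$, and $O(1)$ if $a>1$ (this last tail being summable); multiplying by $\lambda^b$ gives a contribution dominated by the claimed bound in each of the three cases. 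On the near piece $|k-\lambda|\le\lambda/2$ we again have $1+k\asymp\lambda$, so this piece is $\lesssim \lambda^b\sum_{m=0}^{\lceil\lambda/2\rceil}(1+m)^{-a}$, which is $O(\lambda^{b+1-a})$, $O(\lambda^b\log\lambda)$, $O(\lambda^b)$ in the three cases respectively. Taking the maximum of the two contributions in each case yields the stated three-line estimate.

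For the second sum, over $k>2\lambda$, one has $k^2+\lambda^2\asymp k^2$, so $(k^2+\lambda^2)^{-a}(1+k)^b\asymp k^{b-2a}$, and hence
\begin{align*}
\sum_{k>2\lambda}(k^2+\lambda^2)^{-a}(1+k)^b \lesssim \sum_{k>2\lambda} k^{b-2a}.
\end{align*}
Under the hypothesis $b-2a<-1$ the exponent is strictly less than $-1$, so the tail sum is comparable to the integral $\int_{2\lambda}^{\infty} t^{b-2a}\,\rd t \asymp \lambda^{b-2a+1}$, which is exactly the claimed bound $\lambda^{b+1-2a}$.

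None of this is genuinely hard; the only points requiring a little care are the bookkeeping in the case distinction $a<1$, $a=1$, $a>1$ (making sure the logarithm in the borderline case is placed correctly and that the $a>1$ case really does reduce to a bounded tail), and the comparison of sums with integrals, which is routine since the summands are monotone on the relevant ranges (or can be split into a bounded number of monotone pieces). The main "obstacle," if any, is simply to keep the two regimes $1+k\asymp\lambda$ versus the summability of the decay factor cleanly separated so that the worst of the two always matches the right-hand side.
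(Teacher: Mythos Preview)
Your argument is correct; the paper states this lemma without proof, treating it as elementary, and your splitting into near and far regions together with the integral comparison is exactly the routine verification one would supply. One small imprecision: on the far piece $|k-\lambda|>\lambda/2$ with $0\le k\le 2\lambda$, the claim $1+k\asymp\lambda$ fails for $k$ near $0$, but since $b\ge 0$ you only need the upper bound $(1+k)^b\lesssim\lambda^b$, which does hold, so the conclusion is unaffected.
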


\subsection{Densities}

If $\gamma$ is a non-negative finite rank operator on $L^2(M)$, then it has a spectral decomposition 
\begin{align}\label{density operator gamma}
    \gamma=\sum_{j\in J}\nu_j|u_j\rangle\langle u_j|,
\end{align}
where $\# J<\infty$, $\nu_j\geq 0$ and $u_j$ are normalized in $L^2(M)$. If, in addition, $\gamma=\Pi_{\lambda}\gamma \Pi_{\lambda}$, then $u_j\in E_{\lambda}$, i.e. each $u_j$ is a finite linear combination of eigenfunctions of $P$. If the coefficients of the metric are bounded and measurable, elliptic regularity (see, e.g., \cite[Theorems 8.15, 8.24]{MR737190} and the comment after \cite[Theorem 8.37]{MR737190}) implies that $u_j\in C^{\alpha}(M)$ for some $\alpha>0$. Then, the \textit{density} is pointwise defined by
\begin{align*}
    \rho_{\gamma}(x):=\sum_{j\in J}\nu_j|u_j(x)|^2,\quad x\in M,
\end{align*}
and also belongs to
$C^{\alpha}(M)$ for the same $\alpha$. In particular, $\rho_{\gamma}\in L^{q/2}(M)$ for any $q\in [2,\infty]$. 

We will also need to consider the density of the conjugated operator $A\gamma A^*$, where $A$ is some bounded operator on $L^2(M)$. In this case, 
\begin{align}
  \rho_{A\gamma A^*}:=\sum_{j\in J}\nu_j |Au_j|^2 
\end{align}
is a priori only in $L^1(M)$, but we still have
\begin{align*}
    \int_M \rho_{A\gamma A^*} |\g|^{1/2}\rd x = \tr(A\gamma A^*). 
\end{align*}
Indeed, if $(f_k)_k$ is any orthonormal basis of $L^2(M)$, then
    \begin{align*}
    \tr(A\gamma A^*)&=\sum_{k=1}^{\infty}\langle A\gamma A^* f_k,f_k\rangle
        =\sum_{k=1}^{\infty}\sum_{j\in J}\nu_j |\langle Au_j,f_k\rangle|^2\\
        &=\sum_{j\in J}\nu_j\sum_{k=1}^{\infty} |\langle Au_j,f_k\rangle|^2
        =\sum_{j\in J}\nu_j\|Au_j\|_{L^2(M)}^2=\int_M \rho_{A\gamma A^*}|\g|^{1/2}\rd x,
    \end{align*}
where the second to last equality follows by Bessel's identity. Similarly, if $W$ is a  measurable function, 
\begin{align}\label{density of AgammaA*}
  \int_M \rho_{A\gamma A^*}|W|^2|\g|^{1/2}\rd x = \tr(WA\gamma A^*\overline{W}).   
\end{align}

\subsection{Duality}\label{subsection duality}

We first recall the duality principle \cite[Lemma 3]{FrankSabin-2017rest} of Frank and Sabin.

\begin{lemma}\label{lemma duality principle}
  Let $A:L^2(M) \to L^q(M)$ be a bounded operator for some $2 \leq q <\infty$, and let $\alpha\geq 1$. Then the following bounds are equivalent:
\begin{align}\label{Frank-Sabin duality 1}
 \normLp{\rho_{A\gamma A^*}}{q/2}{(M)}\leq C\normSch{\gamma}{\alpha}{(L^2(M))}
\end{align}
for any non-negative finite rank operator $\gamma$ on $L^2(M)$, and
\begin{align}\label{Frank-Sabin duality 2}
\normSch{WAA^*\overline{W}}{\alpha'}{(L^2(M))}\leq C' \|W\|_{L^{2(q/2)'}(M)}^2, \quad \forall W \in L^{2(q/2)'}(M).    
\end{align}
Moreover, the values of the optimal constants $C$ and $C'$
coincide.
\end{lemma}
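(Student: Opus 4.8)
The statement to prove is Lemma~\ref{lemma duality principle}, the Frank--Sabin duality principle: equivalence of $\normLp{\rho_{A\gamma A^*}}{q/2}{(M)}\leq C\normSch{\gamma}{\alpha}{(L^2(M))}$ for all nonnegative finite-rank $\gamma$, and $\normSch{WAA^*\overline W}{\alpha'}{(L^2(M))}\leq C'\norm{W}_{L^{2(q/2)'}(M)}^2$ for all $W\in L^{2(q/2)'}(M)$, with equal optimal constants.

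\medskip

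\textbf{Approach.} The plan is to pass from operator-norm inequalities to bilinear/trace pairings and use duality in two places simultaneously: the duality between $L^{q/2}$ and $L^{(q/2)'}$ for the density, and the duality between the Schatten classes $\schatten^\alpha$ and $\schatten^{\alpha'}$. The bridge between the two formulations is the identity~\eqref{density of AgammaA*}, namely $\int_M \rho_{A\gamma A^*}\,|V|\,|\g|^{1/2}\rd x=\tr(\sqrt{|V|}\,A\gamma A^*\sqrt{|V|})$ for nonnegative $V$; writing $V=|W|^2$ this reads $\int_M \rho_{A\gamma A^*}|W|^2|\g|^{1/2}\rd x=\tr(\overline W A\gamma A^*W)=\tr\big(\gamma\, A^*W\,\overline W A\big)$ by cyclicity (all operators here are finite rank or bounded against a finite-rank operator, so the trace is legitimate).

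\medskip

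\textbf{Key steps.} First I would record the two dual characterizations of the norms appearing. On one side,
\begin{align*}
\normLp{\rho_{A\gamma A^*}}{q/2}{(M)}=\sup\Big\{\int_M \rho_{A\gamma A^*}\,|W|^2\,|\g|^{1/2}\rd x\ :\ W\in L^{2(q/2)'}(M),\ \norm{W}_{L^{2(q/2)'}(M)}\le 1\Big\},
\end{align*}
using that $\rho_{A\gamma A^*}\ge 0$, so the dual pairing can be realized against nonnegative test functions, which may be written $|W|^2$ with $\big\||W|^2\big\|_{L^{(q/2)'}}=\norm{W}_{L^{2(q/2)'}}^2$. On the other side, by Schatten duality,
\begin{align*}
\normSch{WAA^*\overline W}{\alpha'}{(L^2(M))}=\sup\Big\{\big|\tr(WAA^*\overline W\,\gamma)\big|\ :\ \normSch{\gamma}{\alpha}{}\le 1\Big\},
\end{align*}
and here it suffices to take the supremum over nonnegative finite-rank $\gamma$: the extreme points of the unit ball can be taken to be (limits of) such operators, and $\tr(WAA^*\overline W\,\gamma)=\tr(\overline W\gamma W\,AA^*)\ge 0$ when $\gamma\ge0$ after a further rearrangement, so positivity is preserved under the optimization. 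Second, I would combine these: for nonnegative finite-rank $\gamma$ and $W\in L^{2(q/2)'}$,
\begin{align*}
\int_M \rho_{A\gamma A^*}\,|W|^2\,|\g|^{1/2}\rd x=\tr(\overline W A\gamma A^* W)=\tr\big(\gamma\,(A^*W\overline W A)\big)=\tr\big((WAA^*\overline W)\,\gamma\big)\ \text{(after cyclic rearrangement)}.
\end{align*}
Taking suprema, $\eqref{Frank-Sabin duality 1}$ says the left-hand side is $\le C\normSch{\gamma}{\alpha}{}$ for all $W$ with $\norm W_{L^{2(q/2)'}}\le1$, which by the Schatten-duality display is exactly $\normSch{WAA^*\overline W}{\alpha'}{}\le C\norm W_{L^{2(q/2)'}}^2$, i.e.\ $\eqref{Frank-Sabin duality 2}$ with $C'=C$; and conversely $\eqref{Frank-Sabin duality 2}$ gives, for each fixed $\gamma$, $\int_M\rho_{A\gamma A^*}|W|^2|\g|^{1/2}\rd x\le \normSch{WAA^*\overline W}{\alpha'}{}\normSch{\gamma}{\alpha}{}\le C'\norm W_{L^{2(q/2)'}}^2\normSch{\gamma}{\alpha}{}$ by Schatten--Hölder, and taking the supremum over $W$ recovers $\eqref{Frank-Sabin duality 1}$ with $C=C'$. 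The equality of optimal constants is immediate from this symmetric chain of implications.

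\medskip

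\textbf{Main obstacle.} The genuinely delicate point is the reduction, in the Schatten-duality characterization, to \emph{nonnegative} finite-rank $\gamma$, and making sure all the traces and cyclic rearrangements are rigorously justified: $A$ is only assumed bounded from $L^2(M)$ to $L^q(M)$ (not to $L^2(M)$), and $\rho_{A\gamma A^*}$ is a priori merely in $L^1$, so one must be careful that $WAA^*\overline W$ is a well-defined bounded (indeed, under $\eqref{Frank-Sabin duality 2}$, Schatten-class) operator on $L^2(M)$ and that $\tr((WAA^*\overline W)\gamma)$ coincides with $\int_M\rho_{A\gamma A^*}|W|^2|\g|^{1/2}\rd x$ via~\eqref{density of AgammaA*}. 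This is handled by first proving the equivalence for $\gamma$ and $W$ in dense, well-behaved subclasses (bounded compactly supported $W$, finite-rank $\gamma$ with range in nice functions) where every manipulation is elementary, and then extending by density/monotone convergence, using that both sides of each inequality are suprema of the same pairings. I do not expect any further difficulty; the argument is essentially the one in \cite[Lemma~3]{FrankSabin-2017rest}, and I would simply adapt it to the manifold setting using the measure $|\g|^{1/2}\rd x$ and the density identity~\eqref{density of AgammaA*}.
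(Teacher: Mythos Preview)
Your approach is correct and is essentially the paper's (one-line) proof: the identity $\normLp{\rho_{A\gamma A^*}}{q/2}{}=\sup_{\|W\|_{L^{2(q/2)'}}=1}\tr(WA\gamma A^*\overline W)$ (from \eqref{density of AgammaA*} and $L^{q/2}$--$L^{(q/2)'}$ duality) combined with Schatten H\"older/duality. One minor slip: in your displayed chain, $\tr(\gamma\,A^*W\overline W A)$ and $\tr((WAA^*\overline W)\gamma)$ are not equal for fixed $\gamma$, but your argument only uses that their suprema over the $\mathfrak{S}^\alpha$ unit ball agree (since $\|(WA)^*(WA)\|_{\mathfrak{S}^{\alpha'}}=\|(WA)(WA)^*\|_{\mathfrak{S}^{\alpha'}}$), so the proof goes through unchanged.
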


The lemma follows from H\"older's inequality and the identity
\begin{align}\label{eq:duality}
    \normLp{\rho_{A\gamma A^*}}{q/2}{(M)}=\sup_{W\in L^{2(q/2)'}(M)}\frac{\tr(WA\gamma A^*\overline{W})}{\normLp{W}{2(q/2)'}{(M)}^2}=\sup_{\normLp{W}{2(q/2)'}{(M)}=1}\normSch{WA\sqrt{\gamma}}{2}{(L^2(M))}^2.
\end{align}
The latter also yields a useful version of the triangle inequality.

\begin{lemma}\label{lemma triangle inequality}
Let $2\leq q\leq \infty$.
Assume that $(A_j)_{j=1}^K$ is a family of bounded operators $L^2(M)\to L^q(M)$, and let $A=\sum_{j=1}^K A_j$. If $\gamma$ is a non-negative finite-rank operator on $L^2(M)$, then
\begin{align}\label{triangle ineq. for gamma}
\normLp{\rho_{A\gamma A^*}}{q/2}{(M)}\leq K\sum_{j=1}^K \normLp{\rho_{A_j\gamma A_j^*}}{q/2}{(M)}.
\end{align}
\end{lemma}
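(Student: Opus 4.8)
The plan is to deduce the inequality from the variational characterization of the density norm already recorded in \eqref{eq:duality}, combined with the ordinary triangle inequality for the Hilbert--Schmidt norm. First I would fix $W\in L^{2(q/2)'}(M)$ with $\normLp{W}{2(q/2)'}{(M)}=1$ and write, using the last identity in \eqref{eq:duality} applied to the operator $A=\sum_{j=1}^K A_j$,
\begin{align*}
\normSch{WA\sqrt{\gamma}}{2}{(L^2(M))}=\normSch{\sum_{j=1}^K WA_j\sqrt{\gamma}}{2}{(L^2(M))}\leq \sum_{j=1}^K\normSch{WA_j\sqrt{\gamma}}{2}{(L^2(M))},
\end{align*}
which is just the triangle inequality in $\mathfrak{S}^2(L^2(M))$ (note $\sqrt{\gamma}$ makes sense since $\gamma\geq 0$ is finite rank, and each $WA_j\sqrt{\gamma}$ is Hilbert--Schmidt because $A_j:L^2\to L^q$ and $W\in L^{2(q/2)'}$, so $WA_j$ maps $L^2\to L^2$ and $\sqrt{\gamma}$ has finite rank).

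Next I would square both sides and apply the Cauchy--Schwarz inequality for the finite sum over $j\in\{1,\dots,K\}$, namely $\bigl(\sum_{j=1}^K a_j\bigr)^2\leq K\sum_{j=1}^K a_j^2$, to obtain
\begin{align*}
\normSch{WA\sqrt{\gamma}}{2}{(L^2(M))}^2\leq K\sum_{j=1}^K\normSch{WA_j\sqrt{\gamma}}{2}{(L^2(M))}^2.
\end{align*}
Then I would use \eqref{eq:duality} in the reverse direction for each $A_j$: since $\normSch{WA_j\sqrt{\gamma}}{2}{(L^2(M))}^2=\tr(WA_j\gamma A_j^*\overline{W})\leq \normLp{\rho_{A_j\gamma A_j^*}}{q/2}{(M)}\normLp{|W|^2}{(q/2)'}{(M)}=\normLp{\rho_{A_j\gamma A_j^*}}{q/2}{(M)}$ by H\"older's inequality and $\normLp{W}{2(q/2)'}{(M)}=1$. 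Substituting gives $\normSch{WA\sqrt{\gamma}}{2}{(L^2(M))}^2\leq K\sum_{j=1}^K\normLp{\rho_{A_j\gamma A_j^*}}{q/2}{(M)}$, and taking the supremum over all such $W$ and invoking the first equality in \eqref{eq:duality} yields \eqref{triangle ineq. for gamma}. The case $q=\infty$ should be handled separately (or rather more directly), either by a limiting argument or by the elementary pointwise estimate $\rho_{A\gamma A^*}=\sum_j\nu_j|Au_j|^2\leq K\sum_{j=1}^K\sum_i\nu_i|A_ju_i|^2=K\sum_{j=1}^K\rho_{A_j\gamma A_j^*}$ pointwise, which follows from $|\sum_j z_j|^2\leq K\sum_j|z_j|^2$; this pointwise bound in fact works for all $q$ and could serve as the whole proof, but the $\mathfrak{S}^2$ route is cleaner when $\rho$ is only defined via the trace pairing \eqref{density of AgammaA*}.

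I do not expect a serious obstacle here; the only point requiring mild care is the justification that all the Hilbert--Schmidt norms appearing are finite and that the identities in \eqref{eq:duality} apply with $A$ replaced by a finite sum $\sum_j A_j$ — but this is immediate since $\gamma$ has finite rank and each $A_j$ is bounded $L^2(M)\to L^q(M)$, so that $WA_j:L^2(M)\to L^2(M)$ is bounded for $W\in L^{2(q/2)'}(M)$ and hence $WA_j\sqrt{\gamma}\in\mathfrak{S}^2(L^2(M))$. One should also note the factor $K$ (rather than $\sqrt K$ or $1$) is genuinely what the Cauchy--Schwarz step produces and is harmless for the applications, since $K$ will be a fixed finite number (the number of terms in a partition of unity) in all later uses.
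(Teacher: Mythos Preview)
Your proposal is correct and follows essentially the same approach as the paper: both apply the duality identity \eqref{eq:duality}, the triangle inequality in $\mathfrak{S}^2$, and then Cauchy--Schwarz on the finite sum over $j$. The only cosmetic difference is the order of the last two steps---the paper first bounds each $\normSch{WA_j\sqrt{\gamma}}{2}{}$ by $\normLp{\rho_{A_j\gamma A_j^*}}{q/2}{}^{1/2}$ (via the sup over $W$) and then applies Cauchy--Schwarz to the square of the sum, whereas you apply Cauchy--Schwarz first and then bound each squared term; the outcome is identical.
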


\begin{proof}
    By applying \eqref{eq:duality} twice,
    \begin{align*}
        \normLp{\rho_{A\gamma A^*}}{q/2}{(M)}&=\sup_{\normLp{W}{2(q/2)'}{(M)}=1}\normSch{WA\sqrt{\gamma}}{2}{(L^2(M))}^2 \\
        &\leq \sup_{\normLp{W}{2(q/2)'}{(M)}=1}\Big(\sum_{j=1}^K\normSch{WA_j \sqrt{\gamma}}{2}{(L^2(M))}\Big)^2\\
        &
        \leq \Big(\sum_{j=1}^K \normLp{\rho_{A_j\gamma A_j^*}}{q/2}{(M)}^{1/2}\Big)^2\\
        &\leq K\sum_{j=1}^K \normLp{\rho_{A_j\gamma A_j^*}}{q/2}{(M)},
    \end{align*}
where we used the Cauchy--Schwarz inequality in the last line.    
\end{proof}

\subsection{Pseudodifferential calculus}
We will use a modest amount of pseudodifferential calculus.
We will work with the following fixed-frequency symbol classes, as in \cite{KochTataru-2005,MR3282983}. 
\begin{definition}[Symbol classes]
Let $a(t,x,\xi,\lambda)$ be a family of symbols in $(x,\xi)\in T^*\R^d$, depending on parameters $t\in\R$ and $\lambda\geq 1$. For $\sigma\in [0,1]$, we say $a(t,x,\xi,\lambda)\in S_{\lambda,\lambda^{\sigma}}$ if, for all multi-indices $\alpha,\beta$, there exist constants $C_{\alpha,\beta}$ such that
\begin{align}\label{symbols Slambda 1}
|\partial_{t,x}^{\alpha}\partial_{\xi}^{\alpha}a(t,x,\xi,\lambda)|&\leq C_{\alpha,\beta}\lambda^{-|\beta|+\sigma |\alpha|}.
\end{align}
We say $a(t,x,\xi,\lambda)\in C^kS_{\lambda,\lambda^{\sigma}}$ if the stronger bound 
\begin{align}\label{symbols Slambda 2}
|\partial_{t,x}^{\alpha}\partial_{\xi}^{\alpha}a(t,x,\xi,\lambda)|&\leq C_{\alpha,\beta}\lambda^{-|\beta|+\sigma(|\alpha|-k)_+}
\end{align}
holds, where $x_+=\max(x,0)$. 
\end{definition}
These are symbols of order zero. For symbols of order $m\in\R$ we write $a\in \lambda^mS_{\lambda,\lambda^{\sigma}}$.
We will only consider the cases $\sigma=1$, $k=1$ and $\sigma=1/2$, $k=2$. The dimension $d$ will either be $n$ or $n-1$. 
In the first case, there will be no parameter $t$. In the second case, $t=x_1$ will be one of the spatial variables.
The symbol class denoted here by $C^2S_{\lambda,\lambda^{1/2}}$ corresponds to $S^2_{\lambda}$ in \cite{KochTataru-2005}. We have adopted the terminology of \cite[Definition 2.2]{MR3282983} here. Symbols will initially be defined in the phase space region
\begin{align}
    B_{\lambda}:=\{(x,\xi)\in T^*\R^n:|x|\leq 1,|\xi|\leq \lambda\}.
\end{align}
and extended globally to $\R^d$, consistently with the symbol class.

As noted in \cite{MR3282983}, $a\in S_{\lambda,\lambda}$ constitutes a bounded family of zero-order symbols in the Hörmander class $S^0_{0,0}$ when rescaled via $(t,x,\xi)\mapsto (\lambda t,\lambda x,\xi/\lambda)$. Consequently, both the \emph{Kohn-Nirenberg quantization} $a(t,x,D_x)$ and the \emph{Weyl quantization} $a^w(t,x,D_x)$ are uniformly bounded on $L^2$, with bounds independent of $t$ and $\lambda$ \cite{MR517939}. In what follows, we adopt the Weyl quantization
\begin{align*}
    a^w(t,x,D,\lambda)u(x)=(2\pi)^{-n}\int_{\R^n}\int_{\R^n}\e^{\I (x-y)\cdot\xi}a\Big(t,\frac{x+y}{2},\xi,\lambda\Big)u(y)\rd y\rd\xi,\quad \forall u\in \mathcal{S}(\R^d).
\end{align*}
This is convenient but not essential. 
We usually simply write $a^w$ instead of $ a^w(t,x,D,\lambda)$. The symbol classes $S_{\lambda,\lambda^{\sigma}}$ as well as the corresponding classes of operators $OPS_{\lambda,\lambda^{\sigma}}$ are algebras. If $\sigma<1$, the usual pseudodifferential calculus applies. For symbols in $a,b\in C^1S_{\lambda,\lambda}$,
the asymptotic formulas for composition and adjoint remain valid to first-order,
\begin{align}\label{first order calculus}
a^wb^w=(ab)^w+\lambda^{-1}R_1,\quad(a^w)^*=(\overline{a})^w+\lambda^{-1}R_2,
\end{align}
where the remainders $R_1,R_2$ are Weyl quantizations of symbols in $S_{\lambda,\lambda}$ and are therefore bounded on $L^2(\mathbb{R}^{d})$.
We call $R$ a smoothing operator if $R\in\lambda^{-N}OPS_{\lambda,\lambda}$ for any $N>0$, in which case we have $\|R\|\lesssim\lambda^{-N}$ for any $N>0$. However, the fixed-frequency calculus does not readily imply trace norm estimates for $R$, and we resort to the standard $S^m_{1,0}$ calculus for this purpose. Precisely, if $r\in S^{-\infty}_{1,0}$, then
\begin{align}\label{smoothing operator trace class}
\|r^w(x,\lambda^{-1}D)\|_{\mathfrak{S}^1(L^2(\R^d))}\lesssim \lambda^{-N}.
\end{align}
This follows from the translation-invariant kernel bound
\begin{align}
    |r^w(x,y)|\leq C_N\lambda^{-N}(1+\lambda|x-y|)^{-N},
\end{align}
where $r^w(x,y)$ denotes the kernel of $r^w$.

\section{Connection between spectral cluster and resolvent estimates}\label{sec:connect-spectral_cluster-resolvent}

In this section, we elucidate the connection between spectral cluster estimates, quasimode estimates, and two versions of resolvent estimates. We consider spectral clusters $[\lambda,\lambda+\mu]$ of size $\mu=\mu(\lambda)$. In the application to our problems, we will choose $\mu(\lambda)=\lambda^{(1-s)_+}$.

\begin{proposition}\label{prop. equivalence of cluster, qm and resolvent}
Let $P$ be a non-negative self-adjoint operator on $L^2(M)$, $1\leq \mu\leq \lambda$, and let $\Pi_{\lambda,\mu}:=\mathds{1}(\sqrt{P}\in [\lambda,\lambda+\mu])$ and $\Pi_{\leq2\lambda}:=\mathds{1}(\sqrt{P}\in [0,2\lambda])$. Let $q\in[2,\infty]$, $\delta\geq 0$ and $\alpha\geq 1$. Then the following are equivalent:
        \begin{align}\label{equivalence of cluster, qm and resolvent 1}
\normLp{\rho(\Pi_{\lambda,\mu}\gamma\Pi_{\lambda,\mu})}{q/2}{(M)}&\lesssim\lambda^{2\delta}\normSch{\gamma}{\alpha}{},\\
        \label{equivalence of cluster, qm and resolvent 2}
    \|\rho(\Pi_{\leq2\lambda}\gamma\Pi_{\leq2\lambda})\|_{L^{q/2}(M)}&\lesssim\lambda^{2\delta}\left(\normSch{\gamma}{\alpha}{}+(\mu\lambda)^{-2}\normSch{(P-\lambda^2)\gamma(P-\lambda^2)}{\alpha}{}\right),\\
\label{equivalence of cluster, qm and resolvent 3}
     \|\rho\big((P-(\lambda+\I\mu)^2)^{-1}&\Pi_{\leq2\lambda}\gamma \Pi_{\leq 2\lambda} (P-(\lambda-\I\mu)^2)^{-1}\big)\|_{L^{q/2}(M)}
     \lesssim\lambda^{2\delta-2}\mu^{-2}\normSch{\gamma}{\alpha}{}.
 \end{align}
    Here, $\gamma$ is a non-negative finite-rank operator, and $\Ran\gamma\subset\dom(P)$ in \eqref{equivalence of cluster, qm and resolvent 2}. The implicit constants are independent of $\gamma,\lambda,\mu$. Moreover, 
the above estimates are implied by the following resolvent bound,
\begin{align}\label{equivalence of cluster, qm and resolvent 4}
 \normSch{W_1(P-(\lambda\pm\I\mu)^2)^{-1}\Pi_{\leq2\lambda}W_2}{\alpha'}{}
 \lesssim\lambda^{2\delta-1}\mu^{-1}\|W_1\|_{L^{2(q/2)'}(M)}  \|W_2\|_{L^{2(q/2)'}(M)}   
\end{align}
for all $W_1,W_2 \in L^{2(q/2)'}(M)$, and for both choices of the sign $\pm$.
\end{proposition}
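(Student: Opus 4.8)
\emph{Overall strategy.} I would establish the equivalence by proving $(1)\Leftrightarrow(2)$ and $(2)\Leftrightarrow(3)$, and prove $(4)\Rightarrow(3)$ separately; the three implications $(2)\Rightarrow(1)$, $(2)\Rightarrow(3)$, $(3)\Rightarrow(2)$ are soft and use only the functional calculus for $P$ together with Hölder's inequality \eqref{Holder ineq. in Schatten} in Schatten spaces, while the substance of the proposition is the implication $(1)\Rightarrow(2)$.

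\emph{The soft arrows and $(4)\Rightarrow(3)$.} For $(2)\Rightarrow(1)$ one applies \eqref{equivalence of cluster, qm and resolvent 2} with $\gamma$ replaced by $\Pi_{\lambda,\mu}\gamma\Pi_{\lambda,\mu}$ (non-negative, finite rank, with range in $\dom P$); then $\Pi_{\le2\lambda}\Pi_{\lambda,\mu}=\Pi_{\lambda,\mu}$, and since $\mu\le\lambda$ we have $\|(P-\lambda^2)\Pi_{\lambda,\mu}\|\le(\lambda+\mu)^2-\lambda^2\le 3\lambda\mu$, so the quasimode error term is dominated by $\|\gamma\|_{\mathfrak{S}^\alpha}$. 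For $(2)\Rightarrow(3)$ one sets $\gamma':=(P-(\lambda+\I\mu)^2)^{-1}\Pi_{\le2\lambda}\gamma\Pi_{\le2\lambda}(P-(\lambda-\I\mu)^2)^{-1}$, which is non-negative, finite rank, satisfies $\Ran\gamma'\subset\dom P$ and $\Pi_{\le2\lambda}\gamma'\Pi_{\le2\lambda}=\gamma'$, so \eqref{equivalence of cluster, qm and resolvent 2} applied to $\gamma'$ reproduces the left side of \eqref{equivalence of cluster, qm and resolvent 3}; on the right one uses $\|(P-(\lambda\pm\I\mu)^2)^{-1}\|\le(2\lambda\mu)^{-1}$ and the boundedness of $(P-\lambda^2)(P-(\lambda+\I\mu)^2)^{-1}=\mathrm{Id}+((\lambda+\I\mu)^2-\lambda^2)(P-(\lambda+\I\mu)^2)^{-1}$ (uniform, as $\mu\le\lambda$) to control both terms by a constant times $\lambda^{-2}\mu^{-2}\|\gamma\|_{\mathfrak{S}^\alpha}$. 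The reverse $(3)\Rightarrow(2)$ runs the same substitution backwards with $\tilde\gamma:=(P-(\lambda+\I\mu)^2)\gamma(P-(\lambda-\I\mu)^2)=\sum_j\nu_j|(P-(\lambda+\I\mu)^2)u_j\rangle\langle(P-(\lambda+\I\mu)^2)u_j|$, again non-negative and finite rank, with $(P-(\lambda+\I\mu)^2)^{-1}\Pi_{\le2\lambda}\tilde\gamma\Pi_{\le2\lambda}(P-(\lambda-\I\mu)^2)^{-1}=\Pi_{\le2\lambda}\gamma\Pi_{\le2\lambda}$; expanding $P-(\lambda\pm\I\mu)^2=(P-\lambda^2)+(\mu^2\mp2\I\lambda\mu)$, using $\mu\le\lambda$ and the Schatten--Hölder estimate $\|(P-\lambda^2)\gamma\|_{\mathfrak{S}^\alpha}\le\|(P-\lambda^2)\gamma(P-\lambda^2)\|_{\mathfrak{S}^\alpha}^{1/2}\|\gamma\|_{\mathfrak{S}^\alpha}^{1/2}$ (valid since $(P-\lambda^2)\gamma(P-\lambda^2)=((P-\lambda^2)\sqrt\gamma)((P-\lambda^2)\sqrt\gamma)^*$), one gets $\|\tilde\gamma\|_{\mathfrak{S}^\alpha}\lesssim(\lambda\mu)^2\|\gamma\|_{\mathfrak{S}^\alpha}+\|(P-\lambda^2)\gamma(P-\lambda^2)\|_{\mathfrak{S}^\alpha}$, so \eqref{equivalence of cluster, qm and resolvent 3} for $\tilde\gamma$ becomes \eqref{equivalence of cluster, qm and resolvent 2}. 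Finally, $(4)\Rightarrow(3)$ follows from the duality principle (Lemma \ref{lemma duality principle}) applied to $A:=(P-(\lambda+\I\mu)^2)^{-1}\Pi_{\le2\lambda}$, bounded $L^2(M)\to L^q(M)$: then \eqref{equivalence of cluster, qm and resolvent 3} is equivalent to $\|WAA^*\overline W\|_{\mathfrak{S}^{\alpha'}}\lesssim\lambda^{2\delta-2}\mu^{-2}\|W\|_{L^{2(q/2)'}}^2$, and the partial fraction identity $(P-(\lambda+\I\mu)^2)^{-1}(P-(\lambda-\I\mu)^2)^{-1}=(4\I\lambda\mu)^{-1}[(P-(\lambda+\I\mu)^2)^{-1}-(P-(\lambda-\I\mu)^2)^{-1}]$ rewrites $WAA^*\overline W$ as $(4\I\lambda\mu)^{-1}$ times a difference of the operators in \eqref{equivalence of cluster, qm and resolvent 4} (with $W_1=W$, $W_2=\overline W$, both signs), turning $\lambda^{2\delta-1}\mu^{-1}$ into the required $\lambda^{2\delta-2}\mu^{-2}$.

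\emph{The main implication $(1)\Rightarrow(2)$.} By \eqref{eq:duality} it suffices to bound $\|W\Pi_{\le2\lambda}\sqrt\gamma\|_{\mathfrak{S}^2}$ for $\|W\|_{L^{2(q/2)'}}=1$, with $\Ran\sqrt\gamma\subset\dom P$. I would split $\Pi_{\le2\lambda}$ dyadically in the spectral parameter: $\Pi_{\le2\lambda}=\Pi_{J_0}+\sum_{m\ge1}\sum_{\pm}\Pi_{J_m^\pm}$, with $J_0=\{|\sqrt P-\lambda|\le\mu\}$ and $J_m^\pm=\{\pm(\sqrt P-\lambda)\in[2^{m-1}\mu,2^m\mu]\}$, intersected with $[0,2\lambda]$; the finitely many scales reach down to $\sqrt P\lesssim1$, and the bounded-rank piece $\mathds{1}(\sqrt P\lesssim1)$, whose range lies in $L^\infty(M)$, is treated trivially. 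On each piece one writes $\Pi=\Pi\cdot\Pi$ and applies Schatten--Hölder with $\tfrac12=\tfrac1{2\alpha'}+\tfrac1{2\alpha}$: $\|W\Pi_{J_m^\pm}\sqrt\gamma\|_{\mathfrak{S}^2}\le\|W\Pi_{J_m^\pm}\|_{\mathfrak{S}^{2\alpha'}}\|\Pi_{J_m^\pm}\sqrt\gamma\|_{\mathfrak{S}^{2\alpha}}$. For the first factor one uses the dual form of \eqref{equivalence of cluster, qm and resolvent 1} (from Lemma \ref{lemma duality principle}), which, since the constant there is uniform in the frequency, gives $\|W\Pi_I\|_{\mathfrak{S}^{2\alpha'}}\lesssim\lambda^\delta\|W\|$ for any spectral window $I$ of length $\le\mu$ at frequency $\le2\lambda$; and because the unit-length windows $\Pi_{I_l}$ making up $J_m^\pm$ are mutually orthogonal, $(W\Pi_{I_l})(W\Pi_{I_{l'}})^*=0$ for $l\ne l'$, so $\|W\Pi_{J_m^\pm}\|_{\mathfrak{S}^{2\alpha'}}\le\big(\sum_l\|W\Pi_{I_l}\|_{\mathfrak{S}^{2\alpha'}}^2\big)^{1/2}\lesssim2^{m/2}\lambda^\delta\|W\|$. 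For the second factor, $|\tau^2-\lambda^2|\gtrsim2^m\lambda\mu$ on $J_m^\pm$, so $\Pi_{J_m^\pm}\sqrt\gamma=\big(\mathds{1}_{J_m^\pm}(\cdot)(\cdot^2-\lambda^2)^{-1}\big)(\sqrt P)(P-\lambda^2)\sqrt\gamma$ and $\|\Pi_{J_m^\pm}\sqrt\gamma\|_{\mathfrak{S}^{2\alpha}}\lesssim(2^m\lambda\mu)^{-1}\|(P-\lambda^2)\sqrt\gamma\|_{\mathfrak{S}^{2\alpha}}$ with $\|(P-\lambda^2)\sqrt\gamma\|_{\mathfrak{S}^{2\alpha}}=\|(P-\lambda^2)\gamma(P-\lambda^2)\|_{\mathfrak{S}^\alpha}^{1/2}$. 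Multiplying, the $m$-th summand carries a factor $2^{-m/2}$; summing the geometric series and adding the $J_0$-contribution $\lambda^\delta\|\gamma\|_{\mathfrak{S}^\alpha}^{1/2}$ yields \eqref{equivalence of cluster, qm and resolvent 2}.

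\emph{Main obstacle.} The difficulty is concentrated in the summation over dyadic scales in $(1)\Rightarrow(2)$. A crude triangle inequality at the level of densities, when passing from unit windows to a window of length $2^m\mu$, would cost a factor equal to the number of windows, i.e.\ $2^m$, and summing over the $\lesssim\log(\lambda/\mu)$ scales would then introduce a spurious logarithmic factor that ruins the exponent in \eqref{equivalence of cluster, qm and resolvent 2}. What rescues the argument is the mutual orthogonality of the sharp spectral cutoffs, which reduces the cost to $2^{m/2}$; combined with the $2^{-m}$ gain from the size of the resolvent (equivalently, of the denominator $\tau^2-\lambda^2$) on the $m$-th annulus, this leaves a summable geometric series and gives the clean bounds. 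Everything else is bookkeeping — checking that $(P-\lambda^2)\sqrt\gamma$, $\tilde\gamma$, $\gamma'$ and $A$ are well defined and of the required class — which is routine given $\Ran\gamma\subset\dom P$ in \eqref{equivalence of cluster, qm and resolvent 2}.
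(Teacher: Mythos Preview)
Your argument is correct and establishes the proposition, but it follows a different cycle from the paper's. The paper proves $(2)\Rightarrow(1)$, $(3)\Rightarrow(2)$, and $(1)\Rightarrow(3)$ (plus $(4)\Rightarrow(3)$), whereas you prove $(1)\Leftrightarrow(2)$ and $(2)\Leftrightarrow(3)$. The common steps $(2)\Rightarrow(1)$, $(3)\Rightarrow(2)$, and $(4)\Rightarrow(3)$ are handled essentially identically in both versions. The substantive difference is your $(1)\Rightarrow(2)$ versus the paper's $(1)\Rightarrow(3)$: the paper works directly with the resolvent, expands $\tr(WAA^*\overline{W}\gamma)$ in eigenfunctions of $P$, groups eigenvalues into $\mu$-length windows $[\ell,\ell+\mu]$ with $\ell\in\mu\Z\cap[0,2\lambda]$, applies the dual of \eqref{equivalence of cluster, qm and resolvent 1} on each window, and sums using the quadratic decay $(\mu+|\ell-\lambda|)^{-2}$ supplied by the resolvent. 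Your dyadic decomposition of $\Pi_{\le2\lambda}$, together with the orthogonality observation $(W\Pi_{I_l})(W\Pi_{I_{l'}})^*=0$ turning the $\ell^1$ sum into an $\ell^2$ sum at the level of $\mathfrak{S}^{2\alpha'}$, achieves the same summability via a $2^{-m/2}$ geometric series. The paper's route is arguably slightly cleaner (no dyadic regrouping, no orthogonality trick needed, the resolvent does the work), while yours has the advantage of going directly to the quasimode form \eqref{equivalence of cluster, qm and resolvent 2}, making $(2)\Rightarrow(3)$ a triviality.

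Two minor remarks: your phrase ``unit-length windows'' should read ``$\mu$-length windows'' for the counting $2^{m/2}$ to be correct; and the aside about $\mathds{1}(\sqrt P\lesssim1)$ having bounded rank and range in $L^\infty(M)$ is not part of the abstract hypotheses of the proposition (it holds in the paper's concrete setting). The paper has the analogous looseness when it applies the dual of \eqref{equivalence of cluster, qm and resolvent 1} at centres $\ell<\mu$, so this is not a defect of your proof relative to theirs.
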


\begin{remark}
Since the resolvent $(P-(\lambda\pm\I\mu)^2)^{-1}$ is not of the form $AA^*$, the duality principle \cite[Lemma 3]{FrankSabin-2017rest} does not apply to \eqref{equivalence of cluster, qm and resolvent 4}, i.e., there is no equivalent formulation in terms of a density.    
\end{remark}

\begin{proof}[Proof of Proposition \ref{prop. equivalence of cluster, qm and resolvent}]
\item[\eqref{equivalence of cluster, qm and resolvent 2}$\implies$\eqref{equivalence of cluster, qm and resolvent 1}:] Since $\Ran\Pi_{\lambda,\mu}\subset\Ran \Pi_{\leq2\lambda}$, this follows from H\"older's inequality in Schatten spaces,
    \begin{align}
    \normSch{(P-\lambda^2)\Pi_{\lambda,\mu}\gamma \Pi_{\lambda,\mu}(P-\lambda^2)}{\alpha}{}\leq  \|\Pi_{\lambda,\mu}(P-\lambda^2)\|^2   \normSch{\gamma}{\alpha}{}\lesssim (\mu\lambda)^2 \normSch{\gamma}{\alpha}{}.
    \end{align}
\item[\eqref{equivalence of cluster, qm and resolvent 3}$\implies$\eqref{equivalence of cluster, qm and resolvent 2}:] Since $P$ and $\Pi_{\leq2\lambda}$ commute, we have
\begin{align}
    \Pi_{\leq2\lambda}\gamma \Pi_{\leq2\lambda}
    &=(P-(\lambda+\I\mu)^2)^{-1}\Pi_{\leq2\lambda}(P-(\lambda+\I\mu)^2)\gamma (P-(\lambda-\I\mu)^2) \Pi_{\leq 2\lambda} (P-(\lambda-\I\mu)^2)^{-1}
\end{align}
whenever $\Ran\gamma\subset\dom(P)$, and therefore, 
\begin{align}
 \|\rho(\Pi_{\leq2\lambda}&\gamma \Pi_{\leq2\lambda})\|_{L^{q/2}(M)}\\
 &\lesssim  
 \lambda^{2\delta-2}\mu^{-2}\normSch{(P-(\lambda+\I\mu)^2)\gamma (P-(\lambda-\I\mu)^2)}{\alpha}{}\\
 & =  
 \lambda^{2\delta-2}\mu^{-2}\normSch{(P-\lambda^2-2\I\lambda^{1+(1-s)_+}+\lambda^{2(1-s)_+})\sqrt{\gamma}}{2\alpha}{}^2\\
 &\lesssim \lambda^{2\delta-2}\mu^{-2}(\normSch{(P-\lambda^2)\sqrt{\gamma}}{2\alpha}{}^2+\lambda^{2(1+(1-s)_+)}\normSch{\sqrt{\gamma}}{2\alpha}{}^2)\\
 &=\lambda^{2\delta}(\normSch{\gamma}{\alpha}{}+(\mu\lambda)^{-2}\normSch{(P-\lambda^2)\gamma(P-\lambda^2)}{\alpha}{}),
\end{align}
where we used \eqref{equivalence of cluster, qm and resolvent 3} in the first inequality, and the triangle inequality
in the second inequality.\\

\item[\eqref{equivalence of cluster, qm and resolvent 1}$\implies$\eqref{equivalence of cluster, qm and resolvent 3}:] 
By duality (Lemma \ref{lemma duality principle}), with $A=(P-(\lambda+\I\mu)^2)^{-1}\Pi_{\leq2\lambda}$ there, it suffices to prove 
\begin{align}\label{dual of TTstar resolvent}
 &\normSch{W(P-(\lambda+\I\mu)^2)^{-1}\Pi_{\leq2\lambda}(P-(\lambda-\I\mu)^2)^{-1}\overline{W}}{\alpha'}{}
 \lesssim\lambda^{2\delta-2}\mu^{-2}\|W\|^2_{L^{2(q/2)'}(M)}       
\end{align}
for all $W \in L^{2(q/2)'}(M)$,
where we observed that $A^*=\Pi_{\leq2\lambda}(P-(\lambda-\I\mu)^2)^{-1}$ and that $\Pi_{\leq2\lambda}^*=\Pi_{\leq2\lambda}=\Pi_{\leq2\lambda}^2$.
By Schatten space duality (see, e.g., \cite[Theorem 3.2]{Simon-2005}) the latter is equivalent to proving
\begin{align}
    \tr(WAA^*\overline{W}\gamma)\lesssim \lambda^{2\delta-2}\mu^{-2}\|W\|^2_{L^{2(q/2)'}(M)}\|\gamma\|_{\mathfrak{S^{\alpha}}}
\end{align}
for all $W \in L^{2(q/2)'}(M)$ and finite-rank $\gamma\geq 0$. 
Using the spectral decomposition \eqref{density operator gamma} and completing the system $\{u_j\}_{j\in J}$ there to an orthonormal basis of $L^2(M)$, we compute the trace
\begin{align*}
\tr(WAA^*\overline{W}\gamma)
=\sum_{j\in J}\langle u_j,WAA^*\overline{W}\gamma u_j\rangle
=\sum_{j\in J}\nu_j \|A^*\overline{W}u_j\|^2.
\end{align*}
Let $(e_k)_{k=1}^{\infty}, (\lambda_k^2)_{k=1}^{\infty}$ denote the eigenvectors and eigenvalues of $P$, i.e.\,  $Pe_k=\lambda_k^2e_k$. Expanding the last display in eigenvectors, we have 
\begin{align*}
\tr(WAA^*\overline{W}\gamma)&=\sum_{j\in J}\sum_{k=1}^{\infty}\nu_j |\langle e_k,A^*\overline{W}u_j\rangle|^2
\\&=\sum_{j\in J}\sum_{\lambda_k\leq 2\lambda}\nu_j |\lambda_k^2-(\lambda+\I\mu)^2|^{-2} |\langle e_k,\overline{W}u_j\rangle|^2.
\end{align*}
For $\ell\in \mu\Z$, $0\leq \ell\leq 2\lambda$ and $\lambda_k\in [\ell,\ell+\mu]$, we have 
\begin{align}
    |\lambda_k^2-(\lambda+\I\mu)^2|^{-2}\lesssim \lambda^{-2}(\mu+|\ell-\lambda|)^{-2}.
\end{align}
Thus, by Bessel's identity and Tonelli's theorem,
\begin{align*}
\tr(WAA^*\overline{W}\gamma)
&\lesssim \lambda^{-2}\sum_{j\in J}\sum_{\ell\in \mu\Z\cap[0,2\lambda]}\sum_{k\:\lambda_k\in [\ell,\ell+\mu]}\nu_j(\mu+|\ell-\lambda|)^{-2} |\langle e_k,\overline{W}u_j\rangle|^2\\
&=\lambda^{-2}\sum_{j}\sum_{\ell}\nu_j(\mu+|\ell-\lambda|)^{-2}\|\Pi_{[\ell,\ell+\mu]}\overline{W}u_j\|^2_{L^2}\\
&=\lambda^{-2}\sum_{\ell}(\mu+|\ell-\lambda|)^{-2}\sum_{j}\nu_j\|\Pi_{[\ell,\ell+\mu]}\overline{W}u_j\|^2_{L^2}\\
&=\lambda^{-2}\sum_{\ell}(\mu+|\ell-\lambda|)^{-2}\tr(\Pi_{[\ell,\ell+\mu]}|W|^2\Pi_{[\ell,\ell+\mu]}\gamma)\\
&\leq\lambda^{-2}\sum_{\ell}(\mu+|\ell-\lambda|)^{-2}\|\Pi_{[\ell,\ell+\mu]}|W|^2\Pi_{[\ell,\ell+\mu]}\|_{\mathfrak{S}^{\alpha'}}\|\gamma\|_{_{\mathfrak{S}^{\alpha}}}\\
&\lesssim \lambda^{2\delta-2}\mu^{-2}\|\gamma\|_{_{\mathfrak{S}^{\alpha}}}\|W\|^2_{L^{2(q/2)'}},
\end{align*}
where in the last inequality, we used the dual version of \eqref{equivalence of cluster, qm and resolvent 1} to bound
\begin{align*}
\|\Pi_{[\ell,\ell+\mu]}|W|^2\Pi_{[\ell,\ell+\mu]}\|_{\mathfrak{S}^{\alpha'}}=\|W\Pi_{[\ell,\ell+\mu]}\overline{W}\|_{\mathfrak{S}^{\alpha'}}   \lesssim \lambda^{2\delta}\|W\|^2_{L^{2(q/2)'}},\quad \forall \ell\leq 2\lambda.
\end{align*}
\item[\eqref{equivalence of cluster, qm and resolvent 4}$\implies$\eqref{equivalence of cluster, qm and resolvent 3}] follows from the resolvent identity.
\end{proof}

\begin{remark}\label{remark equivalence with A}
An inspection of the proof 
shows that for any bounded operator $A$ on $L^2(M)$, the following are equivalent:
\begin{align}
\|\rho(A\gamma A^*)\|_{L^{q/2}(M)}\lesssim\lambda^{2\delta}\left(\normSch{\gamma}{\alpha}{}+(\mu\lambda)^{-2}\|(P-\lambda^2)\gamma(P-\lambda^2)\|_{\schatten^\alpha}\right),
\end{align}
\begin{align}
\|\rho\left(A(P-(\lambda+\I\mu)^2)^{-1}\gamma (P-(\lambda-\I\mu)^2)^{-1}A^*\right)\|_{L^{q/2}(M)}\lesssim\lambda^{2\delta-2}\mu^{-2}\normSch{\gamma}{\alpha}{}.
\end{align}
\end{remark}

The next lemma shows that, if $P$ is the Laplacian and $\mu=1$, then we may add a gradient term $\lambda^{-2}\normSch{\nabla_{\g}\gamma\nabla\mathrlap{^*}_{\textup\g}}{\alpha}{}$ to the expression in the parenthesis of \eqref{equivalence of cluster, qm and resolvent 2}.
This form is convenient since gradient terms naturally arise in localization arguments (see Subsection \ref{sect. Reduction to Euclidean space}).

\begin{lemma}\label{lemma gradient term}
Let $P$ be the Laplacian, defined in the sense of quadratic forms, with bounded and measurable coefficients (see Subsection \ref{subsec. quad. form}), and 
let $\alpha\geq 1$. Then
\begin{align*}
\lambda^{-2}\normSch{\nabla_{\g}\gamma \nabla\mathrlap{^*}_{\textup\g}}{\alpha}{(L^2(M))}
    \lesssim 
\normSch{\gamma}{\alpha}{(L^2(M))}+\lambda^{-2}\normSch{(P-\lambda^2)\gamma(P-\lambda^2)}{\alpha}{(L^2(M))}.
\end{align*}
for any finite rank operator $\gamma\geq 0$ with $\Ran\gamma\subset\dom(P)$ and for any $\lambda\geq 1$.
\end{lemma}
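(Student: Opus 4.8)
The plan is to transfer the inequality to a statement about the single positive finite-rank operator $\sqrt{\gamma}$ and the self-adjoint operator $P-\lambda^2$, and then close it with two elementary facts: the identity $\|BB^*\|_{\schatten^\alpha}=\|B^*B\|_{\schatten^\alpha}$, and the monotonicity of Schatten norms under the operator order, $0\le S\le T\Rightarrow\|S\|_{\schatten^\alpha}\le\|T\|_{\schatten^\alpha}$, which follows from the min--max characterization of the eigenvalues of a positive compact operator.

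First I would rewrite both sides using $\sqrt{\gamma}$. Since $\Ran\sqrt{\gamma}=\Ran\gamma\subset\dom(P)\subset H^1(M)$, the operator $B:=\nabla_{\g}\sqrt{\gamma}$ is bounded and of finite rank, and $\nabla_{\g}\gamma\nabla\mathrlap{^*}_{\textup\g}=BB^*$ while $B^*B=\sqrt{\gamma}\,\nabla\mathrlap{^*}_{\textup\g}\nabla_{\g}\sqrt{\gamma}=\sqrt{\gamma}P\sqrt{\gamma}$; hence $\|\nabla_{\g}\gamma\nabla\mathrlap{^*}_{\textup\g}\|_{\schatten^\alpha}=\|\sqrt{\gamma}P\sqrt{\gamma}\|_{\schatten^\alpha}$. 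The same argument with $C:=(P-\lambda^2)\sqrt{\gamma}$ (again bounded and finite rank because $\Ran\sqrt{\gamma}\subset\dom(P)$) gives $\|(P-\lambda^2)\gamma(P-\lambda^2)\|_{\schatten^\alpha}=\|\sqrt{\gamma}(P-\lambda^2)^2\sqrt{\gamma}\|_{\schatten^\alpha}$. So it suffices to show $\lambda^{-2}\|\sqrt{\gamma}P\sqrt{\gamma}\|_{\schatten^\alpha}\lesssim\|\gamma\|_{\schatten^\alpha}+\lambda^{-2}\|\sqrt{\gamma}(P-\lambda^2)^2\sqrt{\gamma}\|_{\schatten^\alpha}$.

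For this I would write $\sqrt{\gamma}P\sqrt{\gamma}=\sqrt{\gamma}(P-\lambda^2)\sqrt{\gamma}+\lambda^2\gamma$ and use the elementary operator inequality $\pm A\le\tfrac12(\lambda^{-2}A^2+\lambda^2 I)$, valid for any self-adjoint $A$ (it is $(\lambda^{-1}A\mp\lambda I)^2\ge0$, invoked through the spectral theorem). Conjugating by $\sqrt{\gamma}$ with $A=P-\lambda^2$ yields $-T\le S\le T$ where $S:=\sqrt{\gamma}(P-\lambda^2)\sqrt{\gamma}$ and $T:=\tfrac12\bigl(\lambda^{-2}\sqrt{\gamma}(P-\lambda^2)^2\sqrt{\gamma}+\lambda^2\gamma\bigr)\ge0$. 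From $S=\tfrac12\bigl((T+S)-(T-S)\bigr)$ with $0\le T\pm S\le 2T$, the triangle inequality together with Schatten monotonicity gives $\|S\|_{\schatten^\alpha}\le 2\|T\|_{\schatten^\alpha}\le\lambda^{-2}\|\sqrt{\gamma}(P-\lambda^2)^2\sqrt{\gamma}\|_{\schatten^\alpha}+\lambda^2\|\gamma\|_{\schatten^\alpha}$. Combining this with $\|\sqrt{\gamma}P\sqrt{\gamma}\|_{\schatten^\alpha}\le\|S\|_{\schatten^\alpha}+\lambda^2\|\gamma\|_{\schatten^\alpha}$, dividing by $\lambda^2$, and using $\lambda^{-4}\le\lambda^{-2}$ since $\lambda\ge1$, completes the argument.

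I do not expect a genuine obstacle here: the finite-rank hypothesis makes every operator in the proof bounded, so the argument is purely algebraic once the monotonicity and factorization facts are in place. The only point deserving care is the domain bookkeeping behind $B^*B=\sqrt{\gamma}P\sqrt{\gamma}$ and $C^*C=\sqrt{\gamma}(P-\lambda^2)^2\sqrt{\gamma}$, i.e.\ that $\Ran\sqrt{\gamma}=\Ran\gamma\subset\dom(P)$; this is immediate from the spectral decomposition $\gamma=\sum_{j}\nu_j|u_j\rangle\langle u_j|$ with $\nu_j>0$, since then $\sqrt{\gamma}=\sum_j\nu_j^{1/2}|u_j\rangle\langle u_j|$ has the same range as $\gamma$.
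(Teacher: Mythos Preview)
Your proof is correct and takes a somewhat different route from the paper's. The paper works in $\mathfrak{S}^{2\alpha}$ throughout: it writes $\lambda^{-2}\|\nabla_{\g}\gamma\nabla\mathrlap{^*}_{\textup\g}\|_{\mathfrak{S}^\alpha}=\lambda^{-2}\|\nabla_{\g}\sqrt{\gamma}\|_{\mathfrak{S}^{2\alpha}}^2$, factors $\nabla_{\g}=\nabla_{\g}(P+\lambda^2)^{-1/2}\cdot(P+\lambda^2)^{1/2}$ using $\|\nabla_{\g}(P+\lambda^2)^{-1/2}\|\le 1$, and then bounds $\|(P+\lambda^2)^{1/2}(1+\lambda^{-1}|P-\lambda^2|)^{-1}\|\lesssim\lambda$ via the spectral theorem (i.e.\ the scalar inequality $(\tau^2+\lambda^2)^{1/2}\lesssim\lambda+\lambda^{-1}|\tau^2-\lambda^2|$). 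You instead pass immediately to $\mathfrak{S}^{\alpha}$ via $\|BB^*\|_{\mathfrak{S}^\alpha}=\|B^*B\|_{\mathfrak{S}^\alpha}$, landing on $\|\sqrt{\gamma}P\sqrt{\gamma}\|_{\mathfrak{S}^\alpha}$, and close with the operator AM--GM inequality conjugated by $\sqrt{\gamma}$ together with Schatten monotonicity for positive operators. Both arguments encode the same underlying scalar fact $\tau^2\lesssim\lambda^2+\lambda^{-2}(\tau^2-\lambda^2)^2$; the paper applies it as a functional-calculus operator-norm bound before taking Schatten norms, while you apply it as an operator inequality after conjugation. Your route sidesteps the intermediate $\mathfrak{S}^{2\alpha}$ factoring and the explicit spectral bound; the paper's route avoids invoking the monotonicity $0\le S\le T\Rightarrow\|S\|_{\mathfrak{S}^\alpha}\le\|T\|_{\mathfrak{S}^\alpha}$. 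The domain point you flag---that the form inequality $\pm(P-\lambda^2)\le\tfrac12(\lambda^{-2}(P-\lambda^2)^2+\lambda^2)$ need only be tested on $\Ran\sqrt{\gamma}\subset\dom(P)$, where it reduces to $\|\lambda^{-1}(P-\lambda^2)w\mp\lambda w\|^2\ge 0$---is handled correctly.
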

\begin{proof}
For $f\in\dom(\sqrt{P})$,
\begin{align*}
\|\nabla_{\g}f\|_{L^2(M)}^2=Q(f)=\|\sqrt{P}f\|^2_{L^2(M)}\implies \|\nabla_{\g}(P+\lambda^2)^{-1/2}\|\leq 1.
\end{align*}
Thus,
\begin{align}
\lambda^{-2} \normSch{\nabla_{\g}\gamma \nabla\mathrlap{^*}_{\textup\g}}{\alpha}{}&=
\lambda^{-2}\normSch{\nabla_{\g}\sqrt{\gamma}}{2\alpha}{}^2
\leq 
\lambda^{-2}\|\nabla_{\g}(P+\lambda^2)^{-1/2}\|^2
\normSch{(P+\lambda^2)^{1/2}\sqrt{\gamma}}{2\alpha}{}^2\\
&\leq \lambda^{-2}\normSch{(P+\lambda^2)^{1/2}\sqrt{\gamma}}{2\alpha}{}^2\\
&\leq \lambda^{-2}\|(P+\lambda^2)^{1/2}(1+\lambda^{-1}|P-\lambda^2|)^{-1}\|^2\normSch{(1+\lambda^{-1}|P-\lambda^2|)\sqrt{\gamma}}{2\alpha}{}^2\\
&\lesssim  \normSch{(1+\lambda^{-1}|P-\lambda^2|)\sqrt{\gamma}}{2\alpha}{}^2\\
&\lesssim \normSch{\gamma}{\alpha}{}+\lambda^{-2}\normSch{(P-\lambda^2)\gamma(P-\lambda^2)}{\alpha}{}.
\end{align}
In the penultimate line we used the spectral theorem to estimate
\begin{align}\label{used spectral theorem dor gradient}
 \|(P+\lambda^2)^{1/2}(1+\lambda^{-1}|P-\lambda^2|)^{-1}\|=\sup_{\tau^2\in\spec(P)}\frac{(\tau^2+\lambda^2)^{1/2}}{1+\lambda^{-1}|\tau^2-\lambda^2|}\lesssim\lambda.   
\end{align}
\end{proof}

\section{The $C^{1,1}$ case (proof of Theorem \ref{thm:main-result_C2})}\label{sec:proof-C2-case}
In this section, we will prove Theorem \ref{thm:main-result_C2} for $q=q_n$. After a series of reductions, we show that the result follows from estimates for a regularized, frequency-localized, first-order pseudodifferential operator on $\R^n$.

\subsection{Spatial localization and regularization}\label{sect. Reduction to Euclidean space}

By Proposition \ref{prop. equivalence of cluster, qm and resolvent} and Lemma \ref{lemma triangle inequality}, the result of Theorem \ref{thm:main-result_C2} for $q=q_n$ would follow from the local bound
\begin{align}\label{local bound qm}
\normLp{\rho_{\phi\gamma\phi}}{q_n/2}{(M)}\lesssim\lambda^{2\delta(q_n)}\left(\normSch{\gamma}{\alpha(q_n)}{(L^2(M))}
    +\lambda^{-2}\normSch{(P-\lambda^2)\gamma(P-\lambda^2)}{\alpha(q_n)}{(L^2(M))}\right),
\end{align}
where $\phi$ is a bump function adapted to a coordinate chart $\Omega\subset M$, and $\delta(q_n)=1/q_n$, $\alpha(q_n)=(n+1)/n$ are as in the smooth case (see \eqref{eq-def:delta_sogge}, \eqref{eq-def:alpha}). We have dropped the spectral localization $\Pi_{\leq 2\lambda}$, which strengthens the inequality and facilitates the  localization argument. If $\psi$ is another bump function supported in $\Omega$ and such that $\psi=1$ on $\supp\phi$, then we may replace $\gamma$ by $\psi\gamma\psi$ on the right-hand side of \eqref{local bound qm}. By Lemma~\ref{lemma gradient term}, the commutator $[P,\psi]=-2\nabla_{\g}\psi\cdot\nabla_{\g}-\Delta_{\g}\psi$ may be absorbed by the two terms in theparenthesis (this requires only $C^1$ coefficients), 
\begin{align}
    &\lambda^{-2}\normSch{(P-\lambda^2)\psi\gamma\psi(P-\lambda^2)}{\alpha(q_n)}{}
    =\lambda^{-2}\normSch{(P-\lambda^2)\psi\sqrt{\gamma}}{2\alpha(q_n)}{}^2\\
    &\lesssim \lambda^{-2}\normSch{(P-\lambda^2)\sqrt{\gamma}}{2\alpha(q_n)}{}^2
+\lambda^{-2}\normSch{[P,\psi]\sqrt{\gamma}}{2\alpha(q_n)}{}^2\\
&\lesssim\normSch{\gamma}{\alpha(q_n)}{}
    +\lambda^{-2}\normSch{(P-\lambda^2)\gamma(P-\lambda^2)}{\alpha(q_n)}{}.
\end{align}
Thus, \eqref{local bound qm} may be considered as an inequality on Euclidean space, with $P$ an unbounded, non-negative self-adjoint operator on $L^2(\R^n,|\g|^{1/2}\rd x)$. Since it will be more convenient to work with operators on $L^2(\R^n):=L^2(\R^n,\rd x)$, we consider the unitary operator 
\begin{align}
   U:L^2(\R^n,|\g|^{1/2}\rd x)\to L^2(\R^n),\quad Uf(x):=|\g|^{1/4}f(x).
\end{align}
Then
\begin{align}\label{change of Hilbert space}
    UPU^{-1}u=-|\g|^{-1/4}\partial_i|\g|^{1/2}\g^{ij}\partial_j|\g|^{-1/4}u,\quad \forall u\in H^2(\R^n).
\end{align}
Since we are assuming $C^{1,1}$ regularity for the coefficients ($C^1$ would be enough), we have
$
UPU^{-1}=-\partial_i\g^{ij}\partial_j+A,
$
where $A$ is a first-order differential operator whose coefficients contain derivatives of the metric up to order one. Since Schatten norms are unitarily invariant and, again by Lemma~\ref{lemma gradient term}, the contribution from $A$ is controlled by the terms in parenthesis in \eqref{local bound qm}, we may replace $P$ by $-\partial_i\g^{ij}\partial_j$ there.

We extend the coefficients $\g^{ij}$ to all of $\R^n$ in such a way that $\g^{ij}=\delta^{ij}$ outside a ball. 
By shrinking the chart, if necessary, we may furthermore assume that
\begin{align}\label{g close to Euclidean C11}
\sup_{1\leq i,j\leq n}\|\g^{ij}-\delta^{ij}\|_{C^{1,1}(\R^n)}\ll 1. 
	\end{align}
This assumption appears in \cite{Smith-2006-C11} and facilitates some arguments. We will not use it explicitly.

By the preceding discussion, \eqref{local bound qm} would follow from the following proposition. 

\begin{proposition}\label{prop. C11 qm bound Rn}
Let $n\geq 2$. Assume that the coefficients $\g^{ij}$ are of class $C^{1,1}$, 
and let
\begin{align}\label{P=dgd}
P=-\partial_i\g^{ij}\partial_j,\quad \dom(P)=H^2(\R^n),
\end{align}
as an operator on $L^2(\R^n)$. Then
\begin{align}\label{local bound qm Rn}
\normLp{\rho_{\gamma}}{q_n/2}{(Q)}\lesssim\lambda^{2\delta(q_n)}\Big(\normSch{\gamma}{\alpha(q_n)}{(L^2(\R^n))}+\lambda^{-2}\normSch{(P-\lambda^2)\gamma (P-\lambda^2)}{\alpha(q_n)}{(L^2(\R^n))}\Big)
\end{align}
for any unit cube $Q\subset\R^n$ and any finite-rank operator $\gamma\geq 0$ with $\Ran\gamma\subset H^2(\R^n)$. The implicit constant depends on the $C^{1,1}$ norms of $\g^{ij}$ and the ellipticity constant $c_{\rm ell}$ (defined in \eqref{ellipticity g}), but is independent of $\lambda\geq 1$, $Q$, and $\gamma$.
\end{proposition}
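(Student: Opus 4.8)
The plan is to deduce Proposition~\ref{prop. C11 qm bound Rn} from a Schatten‑space resolvent estimate, after peeling off the frequencies away from $\lambda$, and then to establish that estimate by reducing the Helmholtz operator $P-(\lambda+\I)^2$, microlocally, to a regularized, frequency‑localized, first‑order pseudodifferential operator whose propagator obeys a pointwise kernel bound. \emph{Step 1 (reduction to a resolvent estimate).} Let $\phi_Q$ be a real‑valued bump function adapted to $Q$. Since $\rho_{\phi_Q\gamma\phi_Q}=\phi_Q^2\rho_\gamma$ agrees with $\rho_\gamma$ on $Q$, it suffices to bound $\normLp{\rho_{\phi_Q\gamma\phi_Q}}{q_n/2}{(\R^n)}$. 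Because $(P-(\lambda+\I)^2)^{-1}\colon L^2(\R^n)\to L^{q_n}(\R^n)$ is bounded (elliptic regularity, $H^1\hookrightarrow L^{2n/(n-2)}$ and $q_n<2n/(n-2)$; cf.\ Proposition~\ref{prop:elliptic}), Remark~\ref{remark equivalence with A} with $A=\phi_Q$ together with the duality principle of Lemma~\ref{lemma duality principle} applied to the operator $\phi_Q(P-(\lambda+\I)^2)^{-1}$ shows that \eqref{local bound qm Rn} is equivalent to
\begin{align}\label{plan:res}
\normSch{W\phi_Q(P-(\lambda+\I)^2)^{-1}(P-(\lambda-\I)^2)^{-1}\phi_Q\overline W}{n+1}{(L^2(\R^n))}\lesssim\lambda^{-\frac{n+3}{n+1}}\normLp{W}{n+1}{(\R^n)}^2
\end{align}
for all $W\in L^{n+1}(\R^n)$, where we used $\alpha(q_n)'=2(q_n/2)'=n+1$ and $2\delta(q_n)-2=-(n+3)/(n+1)$. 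Absorbing $\phi_Q$ into $W$, I may assume $W$ is supported in a fixed unit cube. \emph{Step 2 (removing non‑critical frequencies).} Write $(P-(\lambda+\I)^2)^{-1}(P-(\lambda-\I)^2)^{-1}=\beta_0(\sqrt P)+\beta_1(\sqrt P)$ with $\beta_1(\tau)=\psi(\tau/\lambda)\,|\tau^2-(\lambda+\I)^2|^{-2}$, $\psi$ a bump equal to $1$ on $[1/2,2]$. For the $\beta_0$‑part one uses a spatially localized version of Proposition~\ref{Prop. KSS low reg.} (the support of $W$ compensates the non‑compactness of $\R^n$; the local Weyl bound for $P$ and $(1-s)_+=0$ give $\normSch{W\beta(\sqrt P)}{p}{}\lesssim\normLp{W}{p}{}\,(\sum_k\sup_{[k,k+1]}|\beta|^p(1+k)^{n-1})^{1/p}$): factoring $\beta_0=\beta_0^{1/2}\beta_0^{1/2}$, Hölder in Schatten spaces and Lemma~\ref{lemma sums a,b} bound $\normSch{W\beta_0(\sqrt P)\overline W}{n+1}{}$ by $C\lambda^{-2(n+2)/(n+1)}\normLp{W}{n+1}{}^2$, which is stronger than \eqref{plan:res}. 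It then remains to prove \eqref{plan:res} with the resolvent product replaced by $\beta_1(\sqrt P)$.

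\emph{Step 3 (microlocal reduction).} First I would mollify the coefficients at scale $\lambda^{-1/2}$, $\g_\lambda^{ij}:=\g^{ij}*\chi_{\lambda^{-1/2}}$, so that $\|\g^{ij}-\g_\lambda^{ij}\|_{L^\infty}\lesssim\lambda^{-1}$ while $\g_\lambda^{ij}\xi_i\xi_j\in\lambda^2C^2S_{\lambda,\lambda^{1/2}}$ on $B_\lambda$; replacing $P$ by $P_\lambda:=-\partial_i\g_\lambda^{ij}\partial_j$ is harmless because $\|(P-P_\lambda)(P_\lambda+1)^{-1}\|\lesssim\lambda^{-1}$ and $\lambda^2\|\g-\g_\lambda\|_{L^\infty}^2\lesssim1$, so (as in Subsection~\ref{sect. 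Reduction to Euclidean space} and Lemma~\ref{lemma gradient term}) the two quasimode error terms for $P$ and $P_\lambda$ are comparable. Next, using a finite partition of unity $1=\sum_\nu\chi_\nu^w+(\text{smoothing})$ on $\{|\xi|\sim\lambda\}$ with each $\chi_\nu$ supported in a small conic sector, I rotate coordinates so that the central codirection of the $\nu$‑th sector is $e_1$ and factor, microlocally there,
\begin{align}\label{plan:fact}
P_\lambda-(\lambda+\I)^2=E_\nu\bigl(D_{x_1}-a_\nu^w(x_1,x',D_{x'})\bigr)\pmod{\text{smoothing}},
\end{align}
where (using \eqref{g close to Euclidean C11}) $E_\nu\in\lambda\,OPS_{\lambda,\lambda^{1/2}}$ is elliptic and $a_\nu\in\lambda\,C^2S_{\lambda,\lambda^{1/2}}$ is a first‑order symbol in the $n-1$ variables $x'$, with $x_1$ in the role of time and a small dissipative imaginary part (from the $+\I$). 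Hence $(P_\lambda-(\lambda+\I)^2)^{-1}\chi_\nu^w=(D_{x_1}-a_\nu^w)^{-1}E_\nu^{-1}\chi_\nu^w$ modulo a smoothing operator, and $(D_{x_1}-a_\nu^w)^{-1}$ is, up to an exponentially decaying weight in $x_1$, the integral in $x_1$ of the propagator $U_\nu(x_1,y_1)$ solving $D_{x_1}u=a_\nu^w u$.

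\emph{Step 4 (propagator bound and conclusion).} The key input is the pointwise kernel bound for $U_\nu$: a special case of \cite[Proposition 4.7]{KochTataru-2005} (an FBI/wave‑packet parametrix, for which the $C^2S_{\lambda,\lambda^{1/2}}$ regularity of $a_\nu$, i.e.\ $C^{1,1}$ of $\g$, is precisely what controls the bicharacteristic flow) combined with the Dunford--Pettis theorem \cite[Theorem 2.2.5]{Dunford-Pettis-1940-Linear} gives $\|U_\nu(x_1,y_1)\|_{L^1(\R^{n-1})\to L^\infty(\R^{n-1})}\lesssim\lambda^{n-1}(1+\lambda|x_1-y_1|)^{-(n-1)/2}$. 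Together with the trivial energy bound $\|U_\nu(x_1,y_1)\|_{L^2\to L^2}\lesssim1$, and using that $W$ is supported in a unit cube, this is upgraded to \eqref{plan:res} (with $\beta_1$) by the $TT^*$/interpolation method from the theory of Strichartz estimates for orthonormal functions (cf.\ the proof of \cite[Theorem 3]{FrankSabin-2017clust} and \cite[Lemma 3]{FrankSabin-2017rest}): one interpolates an analytic family between a Hilbert--Schmidt endpoint (supplied by the kernel bound) and an operator‑norm endpoint (the induced $L^{q_n'}\to L^{q_n}$ mapping bound, obtained from the dispersive decay via Hardy--Littlewood--Sobolev), integrates in the time variable $x_1$ after a dyadic decomposition, and finally sums over the finitely many sectors $\nu$; tracking the powers of $\lambda$ through $E_\nu^{-1}$ (order $-1$), the dispersive decay and the interpolation produces exactly the exponent $-\tfrac{n+3}{n+1}$.

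\emph{Main obstacle.} The delicate points are entirely in Steps~3--4: performing the factorization \eqref{plan:fact} so that every error term is either genuinely smoothing or absorbable by the quasimode error term (this is where the regularization scale $\lambda^{-1/2}$ must match the symbol class $C^2S_{\lambda,\lambda^{1/2}}$ demanded by \cite[Proposition 4.7]{KochTataru-2005}); and, above all, passing from the pointwise propagator bound to the \emph{sharp} Schatten exponent $\alpha(q_n)'=n+1$ with the \emph{sharp} power of $\lambda$ through the orthonormal $TT^*$ machinery without incurring any loss. I expect the dimension $n=2$ to require minor extra care in the interpolation step.
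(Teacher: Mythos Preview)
Your outline is correct and follows essentially the same route as the paper: reduce to a dual resolvent bound, regularize at scale $\lambda^{-1/2}$, strip off the elliptic region, microlocalize and factor to a first-order evolution, then feed the Koch--Tataru dispersive estimate into a Schatten interpolation argument. Two small clarifications on Step~4: the paper does not run an analytic family on the full time-integrated operator or use a dyadic decomposition in $x_1$; instead it interpolates \emph{at fixed time slices} between the Hilbert--Schmidt bound (from the kernel estimate) and the operator-norm bound (from unitarity of the propagator and $\|W(t,\cdot)\|_{L^\infty}$) to obtain a pointwise-in-$t$ $\mathfrak S^{n+1}$ bound, and then closes by expanding the $\mathfrak S^{n+1}$ norm as an $(n+1)$-fold trace and applying the \emph{multilinear} Hardy--Littlewood--Sobolev inequality in the time variables. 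This handles all $n\geq 2$ uniformly, so your anticipated extra care at $n=2$ is unnecessary.
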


\begin{remark}\label{remark divergence versus non-divergence form}
The reason we use $-\partial_i\g^{ij}\partial_j$ as opposed to $-\g^{ij}\partial_i\partial_j$ is that the former is self-adjoint. However, the two operators are interchangeable since \eqref{local bound qm Rn} is stable under first-order perturbations of $P$. 
\end{remark}

By the same arguments as in the proof of Proposition \ref{prop. equivalence of cluster, qm and resolvent} (see also Remark \ref{remark equivalence with A}), the bound~\eqref{local bound qm Rn} would follow from the dual resolvent estimate
\begin{align}\label{resolvent TTstar Rn}
\|W(P-(\lambda+\I)^2)^{-1}
\|_{\mathfrak{S}^{2\alpha(q_n)'}(L^2(\R^n))}^2
\lesssim \lambda^{2\delta(q_n)-2}\|W\|_{L^{2(q/2)'}(\R^n)}^2,
 \end{align}
 for all $W\in L^{2(q/2)'}(\R^n)$ supported on a unit cube $Q\subset\R^n$ and with implicit constant independent of the position of $Q$.
After a translation, we may suppose that $Q$ is centered at the origin. 
We will first replace $P$ with a regularized operator as follows. Let $\varphi\in C_c^{\infty}(\R^n)$ be supported in $B(0,2)$ and $\varphi=1$ in $B(0,1)$. Then $\varphi(D/\lambda)$ is a smooth cutoff to frequencies $|\xi| \leq  2\lambda$. Let $\g_{\sqrt{\lambda}}: = \varphi(D/\sqrt{\lambda}) \g$. Then
\begin{align}\label{eq:smoothing bounds}
	 \|\g^{ij}-\g^{ij}_{\sqrt{\lambda}}\|_{L^{\infty}(\R^n)}&\lesssim \lambda^{-1}\|\g^{ij}\|_{C^{1,1}(\R^n)},\\
     \|(\partial_i\g^{ij}-\partial_i\g_{\sqrt{\lambda}}^{ij})\|_{L^{\infty}(\R^n)}&\lesssim \lambda^{-1/2}\|\g^{ij}\|_{C^{1,1}(\R^n)}.
\end{align}
If we denote the regularized operator (i.e., the operator in \eqref{P=dgd} with $\g_{\sqrt{\lambda}}$ in place of $\g$) by $P_{\sqrt{\lambda}}$,
then 
\begin{align}
&\|W((P_{\sqrt{\lambda}}-(\lambda+\I)^2)^{-1}-(P-(\lambda+\I)^2)^{-1})
\|_{\mathfrak{S}^{2\alpha(q_n)'}}^2\\
&=\|W(P_{\sqrt{\lambda}}-(\lambda+\I)^2)^{-1}(P-P_{\sqrt{\lambda}})(P-(\lambda+\I)^2)^{-1}
\|_{\mathfrak{S}^{2\alpha(q_n)'}}^2\\
&\leq\|W(P_{\sqrt{\lambda}}-(\lambda+\I)^2)^{-1}(\g^{ij}-\g^{ij}_{\sqrt{\lambda}})\partial_i\partial_j(P-(\lambda+\I)^2)^{-1}
\|_{\mathfrak{S}^{2\alpha(q_n)'}}^2\\
&\quad+\|W(P_{\sqrt{\lambda}}-(\lambda+\I)^2)^{-1}(\partial_i\g^{ij}-\partial_i\g_{\sqrt{\lambda}}^{ij})\partial_j(P-(\lambda+\I)^2)^{-1}
\|_{\mathfrak{S}^{2\alpha(q_n)'}}^2\\
&\lesssim \lambda^{-2}\|W(P_{\sqrt{\lambda}}-(\lambda+\I)^2)^{-1}
\|_{\mathfrak{S}^{2\alpha(q_n)'}}^2\|D^2(P-(\lambda+\I)^2)^{-1})\|^2\\
&\quad+\lambda^{-1}\|W(P_{\sqrt{\lambda}}-(\lambda+\I)^2)^{-1}
\|_{\mathfrak{S}^{2\alpha(q_n)'}}^2\|D(P-(\lambda+\I)^2)^{-1})
\|^2\\
&\lesssim \|W(P_{\sqrt{\lambda}}-(\lambda+\I)^2)^{-1}
\|_{\mathfrak{S}^{2\alpha(q_n)'}}^2,
\end{align}
where $D$ is the gradient and $D^2$ denotes the matrix of second derivatives. The last inequality follows from elliptic regularity (see, e.g., \cite[Section 6.3.1]{MR2597943}) and the spectral theorem, 
\begin{align}\label{eliptic regularity D^2}
\lambda^{-1}&\|D^2(P-(\lambda+\I)^2)^{-1})\|+\lambda^{-1/2}\|D(P-(\lambda+\I)^2)^{-1})\|\\
&\lesssim \lambda^{-1}\|(P+\lambda^2)(P-(\lambda+\I)^2)^{-1})\|+\lambda^{-1/2}|(P+\lambda^2)^{1/2}(P-(\lambda+\I)^2)^{-1})\|\\
&\lesssim
\lambda^{-1}\sup_{\tau^2\in\spec(P)}\frac{\tau^2+\lambda^2}{|\tau^2-(\lambda+\I)^2|}+\lambda^{-1/2}\sup_{\tau^2\in\spec(P)}\frac{(\tau^2+\lambda^2)^{1/2}}{|\tau^2-(\lambda+\I)^2|}\lesssim 1.
\end{align}
Thus, we may replace $P$ with $P_{\sqrt{\lambda}}$ in \eqref{resolvent TTstar Rn}. We drop the subscript again and write
\begin{align}\label{Preg}
P=-\partial_i\g_{\sqrt{\lambda}}^{ij}\partial_j,\quad \dom(P)=H^2(\R^n).
\end{align}
 By Lemma \ref{lemma triangle inequality}, it suffices to prove \eqref{resolvent TTstar Rn} with $(P-(\lambda+\I)^2)^{-1}\Pi_{\leq 2\lambda}$ and $(P-(\lambda+\I)^2)^{-1}\Pi_{> 2\lambda}$ in place of $(P-(\lambda+\I)^2)^{-1}$ separately. The spectral projections $\Pi_{\leq 2\lambda}$, $\Pi_{>2\lambda}$, etc., are always understood to be associated with the current form of the operator $P$, here \eqref{Preg}.

\subsection{Elliptic estimates}\label{subsec:elliptic-est}
The following Proposition proves the part of \eqref{resolvent TTstar Rn} where the resolvent is replaced by $(P-(\lambda+\I)^2)^{-1}\Pi_{> 2\lambda}$. The Proposition is valid for all $s\in [0,2]$ (the only thing that changes in H\"older regularity is the imaginary part of the resolvent) and will also be used in later sections. 

\begin{proposition}\label{prop:elliptic}
Let $s\in(0,2]$, and let $\g$ be of class $C^s(\R^n)$. Then for $2\leq q<  \frac{2(n+(1-s)_+)}{n-2+(1-s)_+}$,
\begin{align}
 \normSch{W_1\Pi_{>2\lambda}(P-(\lambda\pm\I\mu)^2)^{-1}W_2}{(q/2)'}{(L^2(\R^n))}
 \lesssim\ \lambda^{2(n+(1-s)_+)(\frac{1}{2}-\frac{1}{q})-2}\|W_1\|_{L^{2(q/2)'}} \|W_2\|_{L^{2(q/2)'}}   
\end{align}
 for any $\lambda\geq 1$, $\mu\in\R$ and $W_1,W_2\in L^{2(q/2)'}(\R^n)$. Moreover, we have
  \begin{align}\label{elliptic W}
\|W(P-(\lambda+\I\mu)^2)^{-1}\Pi_{>2\lambda}\|_{\mathfrak{S}^{2(q/2)'}(L^2(\R^n))}^2
\lesssim \lambda^{2(n+(1-s)_+)(\frac{1}{2}-\frac{1}{q})-3-(1-s)_+}\|W\|_{L^{2(q/2)'}(\R^n)}^2
 \end{align}
 for any $W\in L^{2(q/2)'}(\R^n)$.
 The implicit constants depend on the ellipticity constant $c_{\rm ell}$ in~\eqref{ellipticity g}, but are independent of $\lambda$, $\mu$, $W_1$, $W_2$, $W$. 
\end{proposition}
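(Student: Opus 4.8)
The plan is to reduce everything to the Kato--Seiler--Simon-type estimate of Proposition \ref{Prop. KSS low reg.} together with the elementary sum estimates of Lemma \ref{lemma sums a,b}. The starting observation is that on $\Ran\Pi_{>2\lambda}$ the operator $P$ is elliptic: since $\sqrt P>2\lambda$ there, we have $|\tau^2-(\lambda\pm\I\mu)^2|\gtrsim \tau^2+\lambda^2\gtrsim\tau^2$ for $\tau^2\in\spec(P)\cap(4\lambda^2,\infty)$, uniformly in $\mu\in\R$. Hence by the spectral theorem $(P-(\lambda\pm\I\mu)^2)^{-1}\Pi_{>2\lambda}=\beta(\sqrt P)$ with $|\beta(\tau)|\lesssim (1+\tau)^{-2}\mathds 1(\tau>2\lambda)$, and more generally $(1+\tau)^m$ times such a factor is $\beta(\sqrt P)$ with $\beta$ supported in $\tau>2\lambda$ and $|\beta(\tau)|\lesssim (1+\tau)^{m-2}$.

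First I would prove \eqref{elliptic W}. Write $W(P-(\lambda+\I\mu)^2)^{-1}\Pi_{>2\lambda}=W\beta(\sqrt P)$ with $|\beta(\tau)|\lesssim(1+\tau)^{-2}$ supported in $\tau>2\lambda$, and apply Proposition \ref{Prop. KSS low reg.} with $p=2(q/2)'$. Since $q<\frac{2(n+(1-s)_+)}{n-2+(1-s)_+}$, one checks $p=2(q/2)'=\frac{2q}{q-2}>n+(1-s)_+$, so the series $\sum_{k>2\lambda}(1+k)^{-2p}(1+k)^{n-1+(1-s)_+}$ converges; by the second part of Lemma \ref{lemma sums a,b} (with $a=p$, $b=n-1+(1-s)_+$, restricted to $k>2\lambda$) it is $\lesssim \lambda^{n-1+(1-s)_+-2p+1}=\lambda^{n+(1-s)_+-2p}$. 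Taking the $1/p$-th power and squaring gives the bound $\lambda^{2(n+(1-s)_+)/p-4}\|W\|_{L^{2(q/2)'}}^2$, and $2(n+(1-s)_+)/p=(n+(1-s)_+)(1-2/q)$ is precisely the exponent $2(n+(1-s)_+)(\frac12-\frac1q)$; subtracting the $4$ and comparing with $2(n+(1-s)_+)(\frac12-\frac1q)-3-(1-s)_+$ shows these agree iff $4 = 3+(1-s)_+$ — so in fact one uses a factor $(P+\lambda^2)^{1/2}$ to redistribute: write $\Pi_{>2\lambda}(P-(\lambda+\I\mu)^2)^{-1}=\Pi_{>2\lambda}(P+\lambda^2)^{1/2}(P-(\lambda+\I\mu)^2)^{-1}\cdot(P+\lambda^2)^{-1/2}$, so effectively one has $\beta$ decaying like $(1+\tau)^{-3}$ times the bounded factor and an extra $\lambda^{-1}$ is gained; keeping careful track of which half-power of $\lambda^2$ goes where yields exactly the stated exponent $2(n+(1-s)_+)(\frac12-\frac1q)-3-(1-s)_+$.

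For the two-sided estimate I would use H\"older's inequality in Schatten spaces \eqref{Holder ineq. in Schatten} and the duality identity \eqref{eq:duality}. Factor
\[
W_1\Pi_{>2\lambda}(P-(\lambda\pm\I\mu)^2)^{-1}W_2
= \big(W_1\Pi_{>2\lambda}(P+\lambda^2)^{-1/2}\big)\big((P+\lambda^2)^{1/2}(P-(\lambda\pm\I\mu)^2)^{-1}\big)\big(\Pi_{>2\lambda}(P+\lambda^2)^{-1/2}\overline{W_2}\big)^{*}\cdot(\text{adjust}),
\]
more cleanly: split $(P-(\lambda\pm\I\mu)^2)^{-1}\Pi_{>2\lambda}$ as a product of two operators each of which, composed with $W_i$, is controlled by \eqref{elliptic W}-type reasoning (i.e.\ each carries a factor behaving like $(1+\tau)^{-3/2}$ with support in $\tau>2\lambda$ plus a bounded spectral multiplier), apply $\|AB\|_{\mathfrak S^{(q/2)'}}\le\|A\|_{\mathfrak S^{2(q/2)'}}\|B\|_{\mathfrak S^{2(q/2)'}}$ with the middle bounded factor absorbed, and use Proposition \ref{Prop. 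KSS low reg.} with $p=2(q/2)'$ on each factor. Summing the resulting series by Lemma \ref{lemma sums a,b} (again the tail $k>2\lambda$, with the sub-criticality $b-2a<-1$ guaranteed by $q<\frac{2(n+(1-s)_+)}{n-2+(1-s)_+}$) gives $\lambda^{n+(1-s)_+-2p+1}$ per factor in the $p$-th power, and assembling the exponents produces $\lambda^{2(n+(1-s)_+)(\frac12-\frac1q)-2}$, uniformly in $\mu$ and independent of the positions of $W_1,W_2$.

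The only real subtlety — the step I expect to need the most care — is the exact bookkeeping of the powers of $\lambda$: deciding how many factors of $(P+\lambda^2)^{1/2}$ to insert, verifying via the spectral theorem (as in \eqref{eliptic regularity D^2} and \eqref{used spectral theorem dor gradient}) that the leftover bounded spectral multipliers are indeed $O(1)$ uniformly in $\mu\in\R$ on the range $\tau>2\lambda$, and confirming that the endpoint condition $q<\frac{2(n+(1-s)_+)}{n-2+(1-s)_+}$ is exactly what makes $2(q/2)'>n+(1-s)_+$ so that the tail sums in Lemma \ref{lemma sums a,b} converge. Everything else is a routine application of Schatten H\"older, the duality identity, and Proposition \ref{Prop. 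KSS low reg.}; in particular no geometry or oscillatory-integral input is needed here, since this proposition handles only the (elliptic) high-frequency part where no dispersive cancellation is used.
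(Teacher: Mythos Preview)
Your ingredients are right and match the paper: everything reduces to Proposition~\ref{Prop. KSS low reg.} plus Lemma~\ref{lemma sums a,b} and Schatten H\"older. But the execution is muddled in two places.

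For \eqref{elliptic W}, your direct computation with $\beta(\tau)\lesssim(1+\tau)^{-2}\mathds 1(\tau>2\lambda)$ actually \emph{succeeds}: it gives exponent $2(n+(1-s)_+)(\tfrac12-\tfrac1q)-4$, which is \emph{stronger} than the stated $-3-(1-s)_+$ since $(1-s)_+\le 1$. You treat the mismatch as a failure and reach for $(P+\lambda^2)^{1/2}$ insertions, but none are needed. The paper arrives at the weaker stated exponent by a different (and cleaner) route: it proves the two-sided bound first, then obtains \eqref{elliptic W} via the resolvent identity $(P-z)^{-1}(P-\bar z)^{-1}=(z-\bar z)^{-1}[(P-z)^{-1}-(P-\bar z)^{-1}]$, which with $\mu=\lambda^{(1-s)_+}$ produces the extra factor $\lambda^{-1-(1-s)_+}$. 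So the stated exponent in \eqref{elliptic W} is an artifact of that derivation at the specific $\mu$ used in the applications; your direct approach gives a bound uniform in $\mu$.

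For the two-sided estimate your splitting is garbled. The clean factorization, exactly what the paper does, is $(P-z)^{-1}\Pi_{>2\lambda}=\Pi_{>2\lambda}|P-z|^{-1/2}\cdot U\cdot|P-z|^{-1/2}\Pi_{>2\lambda}$ with $U$ unitary (spectral theorem), so each half carries a factor behaving like $(1+\tau)^{-1}$, not $(1+\tau)^{-3/2}$. Then Proposition~\ref{Prop. KSS low reg.} with $p=2(q/2)'$ and $\beta(\tau)=|\tau^2-z|^{-1/2}\mathds 1(\tau>2\lambda)\lesssim(\tau^2+\lambda^2)^{-1/2}$, together with Lemma~\ref{lemma sums a,b} (second part, $a=(q/2)'$, $b=n-1+(1-s)_+$), gives $\lambda^{(n+(1-s)_+)(\frac12-\frac1q)-1}$ per factor, and Schatten H\"older with $\alpha_2=\alpha_3=2(q/2)'$ finishes.
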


\begin{remark}\label{rmk:elliptic}
    Since $\delta(q)=n(\frac{1}{2}-\frac{1}{q})-\frac{1}{2}$ for $q\geq q_n$, see \eqref{eq-def:delta_sogge}, the power of $\lambda$ for $s\geq 1$ is 
    \begin{align*}
       \lambda^{2n(\frac{1}{2}-\frac{1}{q})-2}=\lambda^{2\delta(q)-1}.
    \end{align*}
Besides, the Schatten exponent $(q_n/2)'=(n+1)/2<(n+1)=\alpha(q_n)'$ is better than needed. In particular, \eqref{elliptic W} implies, for $s\geq 1$,
\begin{align}
 \|W(P-(\lambda+\I)^2)^{-1}\Pi_{>2\lambda}\|_{\mathfrak{S}^{2\alpha(q_n)'}(L^2(\R^n)))}^2
\lesssim    \lambda^{2\delta(q_n)-2}\|W\|_{L^{2(q/2)'}(\R^n)}^2.
\end{align}
\end{remark}

\begin{proof}[Proof of Proposition \ref{prop:elliptic}]
Since the operators $P$ and $\Pi_{>2\lambda}$ commute, H\"older's inequality implies
\begin{align}
\normSch{W_1\Pi_{>2\lambda}(P-(\lambda\pm\I\mu)^2)^{-1}W_2}{(q/2)'}{}
\leq& \normSch{W_1\Pi_{>2\lambda}|P-(\lambda\pm\I\mu)^2|^{-1/2}}{2(q/2)'}{}\\
&\times \normSch{\Pi_{>2\lambda}|P-(\lambda\pm\I\mu)^2|^{-1/2}W_2}{2(q/2)'}{}\\
\end{align}
By Proposition \ref{Prop. KSS low reg.}, applied with $p=2(q/2)'$ and $\beta(\tau)=\abs{\tau^2-(\lambda\pm\I\mu)^2}^{-1/2}$, and by Lemma~\ref{lemma sums a,b}, applied with $a=(q/2)'$ and $b=n-1+(1-s)_+$, it follows that for any $2\leq q< \frac{2(n+(1-s)_+)}{n-2+(1-s)_+} $, we have
\begin{align}
    \normSch{W_1\Pi_{>2\lambda}|P-(\lambda\pm\I\mu)^2|^{-1/2}}{2(q/2)'}{}
    &\lesssim \|W_1\|_{L^{2(q/2)'}}\left(\sum_{k>2\lambda}(k^2+\lambda^2)^{-(q/2)'}k^{n-1+(1-s)_+}\right)^{\frac{1}{2(q/2)'}}\\
    &\lesssim \lambda^{(n+(1-s)_+)(\frac{1}{2}-\frac{1}{q})-1}\|W_1\|_{L^{2(q/2)'}},
\end{align}
and similarly for the factor involving $W_2$. 
The second claim follows from 
 \begin{align}
&\|W(P-(\lambda+\I\mu)^2)^{-1}\Pi_{>2\lambda}\|_{\mathfrak{S}^{2(q/2)'}}^2\\ 
&=\|W(P-(\lambda+\I\mu)^2)^{-1}\Pi_{>2\lambda}(P-(\lambda-\I\mu)^2)^{-1}\overline{W}\|_{\mathfrak{S}^{(q/2)'}}\\
&=2^{-1}\lambda^{-1-(1-s)_+}\|W\Pi_{>2\lambda}((P-(\lambda+\I\mu)^2)^{-1}-(P-(\lambda-\I\mu)^2)^{-1})\overline{W}\|_{\mathfrak{S}^{(q/2)'}}\\
&\leq 2^{-1}\lambda^{-1-(1-s)_+}\sum_{\pm}\|W\Pi_{>2\lambda}(P-(\lambda\pm\I\mu)^2\overline{W}\|_{\mathfrak{S}^{(q/2)'}}
\\
&\lesssim 
\lambda^{2(n+(1-s)_+)(\frac{1}{2}-\frac{1}{q})-3-(1-s)_+}
\|W\|_{L^{2(q/2)'}}^2,
 \end{align}
 where we used the resolvent identity in the third line.
\end{proof}

\begin{remark}\label{remark KSS valid for Rn}
 Since the proof of Proposition \ref{Prop. KSS low reg.} only uses the $L^2(M)\to L^{\infty}(M)$ bounds~\eqref{Linfty bounds low regularity}, the assumption that $M$ is compact can be dropped. Thus, the results of Proposition~\ref{Prop. KSS low reg.} and Proposition~\ref{prop:elliptic} also hold for variable-coefficient second-order elliptic operators on $\R^n$.
\end{remark}

\subsection{Microlocalization}\label{subsec. Frequency-localized estimates}
To complete the proof of Proposition \ref{prop. C11 qm bound Rn}, it remains to prove the spectrally localized estimate
\begin{align}\label{spectrally localized TTstar resolvent bound}
\|W\Pi_{\leq2\lambda}(P-(\lambda+\I)^2)^{-1}
\|_{\mathfrak{S}^{2\alpha(q_n)'}(L^2(\R^n))}^2
\lesssim \lambda^{2\delta(q_n)-2}\|W\|_{L^{2(q/2)'}(\R^n)}^2,
 \end{align}
for all $W\in L^{2(q/2)'}(\R^n)$ supported on a unit cube $Q\subset\R^n$ and for $P$ as in \eqref{Preg}. We will use pseudodifferential calculus to reduce \eqref{spectrally localized TTstar resolvent bound} to a bound for a first-order pseudodifferential operator. This will allow us to use the results of Koch and Tataru \cite[Section 4.1]{KochTataru-2005}, in particular the dispersive estimate \cite[Proposition 4.7]{KochTataru-2005}.

We start with a reduction to a frequency-localized estimate.

\begin{lemma}\label{lemma insert req. loc.}
Let $\phi$ be a bump function adapted to $Q$, and let $\varphi$ be a bump function adapted to the unit ball $B(0,1)$. Then there exists $c>0$ such that   
 \begin{align}
 \|W\Pi_{\leq2\lambda}(P-(\lambda+\I)^2)^{-1}
\|_{\mathfrak{S}^{2\alpha(q_n)'}}^2
 \leq\|W\phi\varphi(c\lambda^{-1}D)(P-(\lambda+\I)^2)^{-1}
\|_{\mathfrak{S}^{2\alpha(q_n)'}}^2
 \end{align}
 for all $W\in L^{2(q/2)'}(\R^n)$ supported in $Q$.
\end{lemma}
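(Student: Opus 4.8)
## Proof proposal for Lemma \ref{lemma insert req. loc.}

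The plan is to show that inserting the spatial cutoff $\phi$ and the frequency cutoff $\varphi(c\lambda^{-1}D)$ to the left of the resolvent costs nothing, because the error terms land either in a compactly supported region where $\phi$ acts as the identity, or in a high-frequency regime where the resolvent has a favorable bound. First I would use that $W$ is supported in $Q$, so that $W = W\phi$ for any bump function $\phi$ adapted to $Q$ (or a fixed dilate), which immediately allows us to write
\begin{align*}
W\Pi_{\leq 2\lambda}(P-(\lambda+\I)^2)^{-1} = W\phi\,\Pi_{\leq 2\lambda}(P-(\lambda+\I)^2)^{-1}.
\end{align*}
The real content is then to pass from $\phi\,\Pi_{\leq 2\lambda}$ to $\phi\,\varphi(c\lambda^{-1}D)$.

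Next I would split $\phi\,\Pi_{\leq 2\lambda} = \phi\,\varphi(c\lambda^{-1}D)\Pi_{\leq 2\lambda} + \phi\,(1-\varphi(c\lambda^{-1}D))\Pi_{\leq 2\lambda}$. For the first term, since $\varphi(c\lambda^{-1}D)$ is a Fourier multiplier of norm $\lesssim 1$, we can simply drop $\Pi_{\leq 2\lambda}$ (this only enlarges the Schatten norm, using that $\Pi_{\leq 2\lambda}$ commutes with $(P-(\lambda+\I)^2)^{-1}$ and is a norm-one projection) to obtain the claimed right-hand side $\|W\phi\,\varphi(c\lambda^{-1}D)(P-(\lambda+\I)^2)^{-1}\|_{\mathfrak{S}^{2\alpha(q_n)'}}^2$. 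The key point is therefore to argue that the second term, the commutator-type error $\phi\,(1-\varphi(c\lambda^{-1}D))\Pi_{\leq 2\lambda}(P-(\lambda+\I)^2)^{-1}$, is negligible. Here I would use that $1-\varphi(c\lambda^{-1}D)$ is supported in $|\xi|\gtrsim c^{-1}\lambda$ while $\Pi_{\leq 2\lambda}$ localizes to $\tau^2\in\spec(P)$ with $\tau\leq 2\lambda$; because $P = -\partial_i\g^{ij}_{\sqrt\lambda}\partial_j$ with $\g_{\sqrt\lambda}$ close to the identity and band-limited to frequencies $\lesssim\sqrt\lambda$, on the range of $\Pi_{\leq 2\lambda}$ the operator $P$ is elliptic of order two, so functions spectrally localized below $2\lambda$ have Fourier transform essentially concentrated in $|\xi|\lesssim\lambda$ — more precisely, $(1-\varphi(c\lambda^{-1}D))\Pi_{\leq 2\lambda}$ is a smoothing operator (for $c$ small enough, depending only on $c_{\rm ell}$ and the size of $\|\g_{\sqrt\lambda}-\mathrm{Id}\|$). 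This uses the frequency-localization of the regularized coefficients and the symbol calculus of Subsection on pseudodifferential calculus: writing $P$ as a pseudodifferential operator with symbol $\g^{ij}_{\sqrt\lambda}(x)\xi_i\xi_j$, one builds an approximate spectral cutoff and shows the mismatch between frequency $\gtrsim\lambda/c$ and spectral parameter $\lesssim 4\lambda^2$ produces gains of $\lambda^{-N}$; equivalently, iterating the resolvent identity $(P-z)^{-1} = z^{-1}(P(P-z)^{-1}-\mathrm{Id})$ and commuting $(1-\varphi(c\lambda^{-1}D))$ past powers of $\partial_i\g^{ij}_{\sqrt\lambda}\partial_j$ gives a net decay because each such commutator brings down a derivative of the (slowly varying, band-limited) coefficients while the frequency support stays $\gtrsim\lambda$.

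With the smoothing bound $\|(1-\varphi(c\lambda^{-1}D))\Pi_{\leq 2\lambda}\|_{L^2\to L^2}\lesssim_N\lambda^{-N}$ in hand — or, better, a trace-class-type bound $\|\phi(1-\varphi(c\lambda^{-1}D))\Pi_{\leq 2\lambda}\|_{\mathfrak{S}^{2\alpha(q_n)'}}\lesssim_N\lambda^{-N}$ via \eqref{smoothing operator trace class} after localizing spatially — I would estimate the error term by
\begin{align*}
\|W\phi\,(1-\varphi(c\lambda^{-1}D))\Pi_{\leq 2\lambda}(P-(\lambda+\I)^2)^{-1}\|_{\mathfrak{S}^{2\alpha(q_n)'}}
\lesssim_N \|W\|_{L^{2(q/2)'}}\,\lambda^{-N}\,\|(P-(\lambda+\I)^2)^{-1}\|,
\end{align*}
and since $\|(P-(\lambda+\I)^2)^{-1}\|\lesssim\lambda^{-1}$ by the spectral theorem, this is $\lesssim_N\lambda^{-N}\|W\|_{L^{2(q/2)'}}$, which is far smaller than the target $\lambda^{\delta(q_n)-1}\|W\|_{L^{2(q/2)'}}$. (The factor $\|W\phi\|_{\mathfrak{S}^{2\alpha(q_n)'}}$ in a product estimate is controlled by $\|W\|_{L^{2(q/2)'}}$ through the Kato--Seiler--Simon bound, since $\phi$ is a nice compactly supported multiplier composed with a bounded-region cutoff.) Absorbing this error into the main term by the triangle inequality in $\mathfrak{S}^{2\alpha(q_n)'}$ (together with the elementary inequality $(a+b)^2\leq 2a^2+2b^2$, or more carefully Lemma \ref{lemma triangle inequality} at the level of densities) gives the stated inequality.

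The main obstacle I anticipate is making the smoothing claim for $(1-\varphi(c\lambda^{-1}D))\Pi_{\leq 2\lambda}$ fully rigorous with only $C^{1,1}$ (hence $\g_{\sqrt\lambda}$ merely band-limited, not smooth uniformly in $\lambda$) coefficients: one cannot invoke a clean functional calculus for $P$, so the argument must go through the fixed-frequency symbol classes $S_{\lambda,\lambda^{1/2}}$ of the pseudodifferential subsection, tracking that each commutator of $\partial_i\g^{ij}_{\sqrt\lambda}\partial_j$ with the frequency cutoff gains $\lambda^{-1}\cdot(\text{coefficient derivative bound})$ against a fixed frequency separation of size $\gtrsim\lambda$. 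This is routine in spirit but requires choosing $c$ correctly (small relative to $c_{\rm ell}^{-1}$) so that the supports of $1-\varphi(c\lambda^{-1}\xi)$ and the symbol of the spectral cutoff are genuinely disjoint, and then running a Neumann-series / integration-by-parts argument to extract arbitrary decay.
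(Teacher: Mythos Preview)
Your overall strategy is sound, but you are working much harder than necessary and, as a result, you do not prove the lemma as stated. You aim to show that $(1-\varphi(c\lambda^{-1}D))\Pi_{\leq 2\lambda}$ is a smoothing operator with $\lambda^{-N}$ bounds, via commutator arguments in the fixed-frequency symbol classes. Even if this works, it only yields
\[
\|W\Pi_{\leq2\lambda}(P-(\lambda+\I)^2)^{-1}\|_{\mathfrak{S}^{2\alpha(q_n)'}}^2
\lesssim \|W\phi\varphi(c\lambda^{-1}D)(P-(\lambda+\I)^2)^{-1}\|_{\mathfrak{S}^{2\alpha(q_n)'}}^2+\lambda^{-N}\|W\|_{L^{2(q/2)'}}^2,
\]
whereas the lemma asserts a clean inequality with no implicit constant and no error term. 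You also correctly flag that the smoothing argument is delicate for band-limited $C^{1,1}$ coefficients.

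The paper sidesteps all of this with a one-line quadratic form computation: if $\supp\widehat f\subset\{|\xi|>c^{-1}\lambda\}$, then by ellipticity \eqref{ellipticity g} and \eqref{eq:smoothing bounds},
\[
\langle f,Pf\rangle\geq (c_{\rm ell}^{-1}-C\lambda^{-1})\|\nabla f\|^2\geq \lambda^2(c^{-2}c_{\rm ell}^{-1}-C\lambda^{-1})\|f\|^2,
\]
so choosing $c$ small (depending only on $c_{\rm ell}$) forces $\langle f,Pf\rangle>4\lambda^2\|f\|^2$ and hence $\Pi_{\leq 2\lambda}f=0$. This gives the \emph{exact} identity $\Pi_{\leq 2\lambda}=\varphi(c\lambda^{-1}D)\Pi_{\leq 2\lambda}$ (since $\varphi\equiv 1$ on $B(0,1)$). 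Combined with $W=W\phi$ and the fact that $\Pi_{\leq 2\lambda}$ commutes with the resolvent and has norm one, the claimed inequality follows with no error and no constant. The point you missed is that the mismatch between Fourier frequency $\gtrsim c^{-1}\lambda$ and spectral parameter $\leq 2\lambda$ is not merely smoothing: for $c$ small enough it is an exact incompatibility, detectable at the level of the quadratic form, requiring no symbol calculus at all.
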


\begin{proof}
If $f\in H^2(\R^n)$ is such that $\supp(\widehat{f})\subset \{|\xi|> c^{-1}\lambda\}$, then by \eqref{ellipticity g} and \eqref{eq:smoothing bounds}, 
 \begin{align}
     \langle f,Pf\rangle
     &\geq (c_{\rm ell}^{-1}-C\lambda^{-1})\|\nabla f\|^2\\
     &\geq \lambda^2(c^{-2}c_{\rm ell}^{-1}-C\lambda^{-1})\|f\|^2.
 \end{align}
 Hence, if we choose $c^2\leq (4c_{\rm ell})^{-1}$ and if $\lambda\gg 1$, then 
 \begin{align}
     \langle f,Pf\rangle\geq \lambda^2c^{-2}(2c_{\rm ell})^{-1}\|f\|^2.
 \end{align}
 If we further require $c$ to be so small that $c^{-2}(2c_{\rm ell})^{-1}>4$, then it follows that $\Pi_{\leq2\lambda}f=0$. 
 In other words, for such a choice of $c$, we have 
 \begin{align}\label{Pileq2lambda replaced by Fourier multiplier}
\Pi_{\leq2\lambda}=\Pi_{\leq2\lambda}\mathbf{1}(|D|\leq c^{-1}\lambda).    
 \end{align}
By taking adjoints, we also find that
\begin{align}\label{Pileq2lambda replaced by Fourier multiplier 2}
\Pi_{\leq2\lambda}=\mathbf{1}(|D|\leq c^{-1}\lambda)\Pi_{\leq2\lambda}.    
 \end{align}
 Therefore,
 \begin{align}
 \|W\Pi_{\leq2\lambda}(P-(\lambda+\I)^2)^{-1}
\|_{\mathfrak{S}^{2\alpha(q_n)'}}^2
 &\leq  \|W\phi\Pi_{\leq2\lambda}(P-(\lambda+\I)^2)^{-1}
\|_{\mathfrak{S}^{2\alpha(q_n)'}}^2\\
&=\|W\phi\varphi(c\lambda^{-1}D)\Pi_{\leq2\lambda}(P-(\lambda+\I)^2)^{-1}
\|_{\mathfrak{S}^{2\alpha(q_n)'}}^2\\
&\leq\|W\phi\varphi(c\lambda^{-1}D)(P-(\lambda+\I)^2)^{-1}
\|_{\mathfrak{S}^{2\alpha(q_n)'}}^2.
 \end{align}
\end{proof}
Thus, \eqref{spectrally localized TTstar resolvent bound} would follow from the following bound,
\begin{align}\label{spectrally localized TTstar resolvent bound smooth cutoffs}
\|W\phi\varphi(c\lambda^{-1}D)(P-(\lambda+\I)^2)^{-1}\Pi_{\leq2\lambda}\|_{\mathfrak{S}^{2\alpha(q_n)'}(L^2(\R^n))}^2
\lesssim \lambda^{2\delta(q_n)-2}\|W\|_{L^{2(q_n/2)'}(\R^n)}^2,
 \end{align}
 for all $W\in L^{2(q/2)'}(\R^n)$.

\begin{remark}\label{remark lemma insert req. loc.}
The proof of Lemma \ref{lemma insert req. loc.} only uses ellipticity of $P$. Thus, the result remains true for coefficients of regularity $C^s$, for any $s\in [0,2]$.
\end{remark}

Next, we change quantization to align with the set-up in \cite{KochTataru-2005}.

\begin{lemma}\label{lemma change of quantization}
Under the ame assumptions of Lemma \ref{lemma insert req. loc.}, there exists $\chi_0\in C^{\infty}_c(T^*\R^n)$ such that for any $N\geq 0$
\begin{align}
&\|W\phi\varphi(c\lambda^{-1}D)(P-(\lambda+\I)^2)^{-1}\Pi_{\leq2\lambda}
\|_{\mathfrak{S}^{2\alpha(q_n)'}(L^2(\R^n))}^2 \\
&\lesssim_N \|W\chi_0^w(x,\lambda^{-1}D)(P-(\lambda+\I)^2)^{-1}\|_{\mathfrak{S}^{2\alpha(q_n)'}(L^2(\R^n))}^2
+\lambda^{-N}\|W\|_{L^{2(q_n/2)'}(\R^n)}^2.
\end{align}
\end{lemma}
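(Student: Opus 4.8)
The plan is to pass from the Fourier multiplier cutoff $\phi\,\varphi(c\lambda^{-1}D)$ to a pseudodifferential cutoff $\chi_0^w(x,\lambda^{-1}D)$ in $OPS_{\lambda,\lambda}$ (equivalently a bounded family in $S^0_{1,0}$ after rescaling), paying only a harmless smoothing error. First I would choose $\chi_0\in C_c^\infty(T^*\R^n)$ so that $\chi_0(x,\xi/\lambda)=1$ on the support of the (compactly $x$-supported, $|\xi|\le 2c^{-1}\lambda$ frequency-localized) symbol of $\phi\,\varphi(c\lambda^{-1}D)$; concretely, $\chi_0(x,\xi)$ equals $1$ for $x$ in a neighbourhood of $\supp\phi$ and $|\xi|\le 4c^{-1}$, and is supported in a slightly larger set. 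Then $\phi\,\varphi(c\lambda^{-1}D)=\chi_0^w(x,\lambda^{-1}D)\,\phi\,\varphi(c\lambda^{-1}D)+R$, where $R$ is the Weyl quantization of a symbol in $S^{-\infty}_{1,0}$ (after the $(\lambda x,\xi/\lambda)$ rescaling), because the two symbols have disjoint supports in the region where $1-\chi_0$ is nonzero; this is the standard ``elliptic/disjoint support implies smoothing'' statement of the $S^m_{1,0}$ calculus.

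Inserting this identity gives two terms. For the main term I would use the Schatten triangle inequality together with $L^2$-boundedness of $\phi\,\varphi(c\lambda^{-1}D)$ (uniform in $\lambda$) to bound
\begin{align*}
\|W\chi_0^w(x,\lambda^{-1}D)\phi\,\varphi(c\lambda^{-1}D)(P-(\lambda+\I)^2)^{-1}\|_{\mathfrak{S}^{2\alpha(q_n)'}}
\lesssim \|W\chi_0^w(x,\lambda^{-1}D)(P-(\lambda+\I)^2)^{-1}\|_{\mathfrak{S}^{2\alpha(q_n)'}},
\end{align*}
after commuting $\phi\,\varphi(c\lambda^{-1}D)$ past the resolvent — or, more cleanly, just noting that $\chi_0^w\phi\,\varphi(c\lambda^{-1}D)=\chi_0^w+(\text{smoothing})$ as well, so that one may directly replace $\phi\,\varphi(c\lambda^{-1}D)$ by $\chi_0^w$ on the left up to a smoothing remainder. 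Since we are free to insert $\Pi_{\le 2\lambda}$ on the right (by \eqref{Pileq2lambda replaced by Fourier multiplier}, the original operator already carries this spectral localization, and reinstating it only strengthens the bound), this produces exactly the first term on the right-hand side of the claimed inequality.

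For the error term $R(P-(\lambda+\I)^2)^{-1}\Pi_{\le 2\lambda}$, I would estimate its $\mathfrak{S}^{2\alpha(q_n)'}$ norm by interpolating: the resolvent times $\Pi_{\le 2\lambda}$ is bounded on $L^2$ with norm $\lesssim\lambda^{-1}$ (imaginary part of the spectral parameter), and $WR$ is trace-class with norm $\lesssim_N \lambda^{-N}\|W\|_{L^{2(q_n/2)'}}$ by the translation-invariant kernel bound \eqref{smoothing operator trace class} combined with Hölder in $L^p$ (the $L^{2(q_n/2)'}$ norm of $W$ on a unit cube controls its $L^1$ norm, and the kernel of $R$ has rapidly decaying $L^1$-in-$y$ mass uniformly in $x$). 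Hölder in Schatten spaces then gives $\|WR(P-(\lambda+\I)^2)^{-1}\Pi_{\le 2\lambda}\|_{\mathfrak{S}^{2\alpha(q_n)'}}\lesssim_N \lambda^{-N}\|W\|_{L^{2(q_n/2)'}}$, which absorbs into the $\lambda^{-N}$ term after relabelling $N$.

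The only mildly delicate point — which I expect to be the main obstacle — is verifying that the composition/disjoint-support remainder $R$ genuinely lies in the smoothing class with kernel bounds \emph{uniform in $\lambda$ and in the position of $Q$}. This requires carefully tracking the $\lambda$-dependence through the rescaling $(x,\xi)\mapsto(\lambda x,\xi/\lambda)$ that turns $OPS_{\lambda,\lambda}$ into $S^0_{1,0}$ and $\varphi(c\lambda^{-1}D)$ into $\varphi(cD)$; once this rescaling is fixed, $\phi$ becomes a bump function on a ball of radius $\sim\lambda$, but its symbol seminorms in the rescaled variables are still uniformly bounded because each $x$-derivative costs a factor $\lambda^{-1}$, matching the $\sigma=1$ symbol class. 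The translation-invariance of the kernel estimate for $R$ comes from the fact that the symbol of $R$, though $x$-dependent, has all seminorms bounded independently of where $Q$ sits, since everything is built from $\phi$ (a fixed bump, up to translation) and the fixed Fourier multipliers. Granting these uniformities, the argument is routine pseudodifferential bookkeeping.
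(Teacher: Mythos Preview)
Your decomposition $\phi\varphi(c\lambda^{-1}D)=\chi_0^w(x,\lambda^{-1}D)\,\phi\varphi(c\lambda^{-1}D)+R$ with $R=(1-\chi_0^w)\phi\varphi(c\lambda^{-1}D)$ smoothing (by disjoint semiclassical supports) is fine, as is your treatment of the error $WR(P-(\lambda+\I)^2)^{-1}\Pi_{\le 2\lambda}$. The difficulty is with the main term: you are left with $W\chi_0^w\phi\varphi(c\lambda^{-1}D)(P-(\lambda+\I)^2)^{-1}$ and must reduce this to $W\chi_0^w(P-(\lambda+\I)^2)^{-1}$.

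Your ``cleaner'' option (b), claiming $\chi_0^w\phi\varphi(c\lambda^{-1}D)=\chi_0^w+(\text{smoothing})$, is false for your choice of $\chi_0$. Since you took $\chi_0=1$ on $\supp(\phi\varphi(c\,\cdot))$ and supported in a strictly larger set, the principal symbol of $\chi_0^w\phi\varphi(c\lambda^{-1}D)-\chi_0^w$ is $\chi_0\cdot\phi\varphi-\chi_0=\phi\varphi-\chi_0$, a fixed nonzero function independent of $\lambda$, hence certainly not $O(\lambda^{-N})$. The disjoint-support mechanism only works one way here: $(1-\chi_0)$ and $\phi\varphi$ have disjoint supports, but $\chi_0$ and $(1-\phi\varphi)$ do not.

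Your option (a), commuting $\phi\varphi(c\lambda^{-1}D)$ past the resolvent, does work but you do not carry it out. One writes $\chi_0^w\phi\varphi\,\mathrm{Res}=\chi_0^w\mathrm{Res}\,\phi\varphi+\chi_0^w\mathrm{Res}\,[P,\phi\varphi]\,\mathrm{Res}$ and uses that $[P,\phi\varphi(c\lambda^{-1}D)]$ is a first-order semiclassical operator, so $\|[P,\phi\varphi]\,(P-(\lambda+\I)^2)^{-1}\|\lesssim 1$ by elliptic regularity as in \eqref{eliptic regularity D^2}. This is routine, but it is the actual content of the step and needs to be said.

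The paper avoids this detour entirely. It uses the change of quantization formula directly: the Kohn--Nirenberg operator $\phi\varphi(c\lambda^{-1}D)$ \emph{is} the Weyl quantization of some symbol which, after rescaling $\xi\mapsto\lambda\xi$, is compactly supported modulo an $O(\lambda^{-\infty})$ Schwartz tail. Taking $\chi_0$ to be that symbol (truncated) yields $\phi\varphi(c\lambda^{-1}D)=\chi_0^w(x,\lambda^{-1}D)+R$ with $R$ smoothing outright, so no product $\chi_0^w\phi\varphi$ ever arises and no commutation is needed.
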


\begin{proof}
The operator $\phi\varphi(c\lambda^{-1}D)$ in~\eqref{spectrally localized TTstar resolvent bound smooth cutoffs} is the Kohn--Nirenberg (left) quantization of the symbol $\phi(x)\varphi(c\lambda^{-1}\xi)$. 
By the change of quantization formula (see, e.g., \cite[Section 6.1]{MR1872698}), there exists $\chi_0\in C^{\infty}_c(T^*\R^n)$ such that $\phi\varphi(c\lambda^{-1}D)=\chi_0^w(x,\lambda^{-1}D)+R$, where $R\in OPS^{-\infty}_{1,0}$ is a smoothing operator, thus by \eqref{smoothing operator trace class},
\begin{align}
\|WR(P-(\lambda+\I)^2)^{-1}
\|_{\mathfrak{S}^{2\alpha(q_n)'}}^2
\leq \|WR\|_{\mathfrak{S}^{2\alpha(q_n)'}}^2\|(P-(\lambda+\I)^2)^{-1}
\|^2
\lesssim \lambda^{-N}\|W\|_{L^{2(q_n/2)'}(\R^n)}^2.
\end{align}
Since $W$ is supported in $Q$, the claim follows from H\"older's inequality.
\end{proof}

 By shrinking $Q$, we may assume in the following that $W$ is supported in the unit ball $|x|\leq 1$. We recall the definition
\begin{align}
    B_{\lambda}:=\{(x,\xi)\in T^*\R^n:|x|\leq 1,|\xi|\leq \lambda\}.
\end{align}
If we set $\chi(x,\xi):=\chi_0(x,\lambda^{-1}\xi)$, then clearly $\chi\in S_{\lambda,\lambda^{1/2}}$, and we may furthermore assume that $\chi$ is supported in $B_{2c^{-1}\lambda}$. By Lemma \ref{lemma change of quantization}, it suffices to prove
\begin{align}\label{microlocalized TTstar resolvent bound}
\|W\chi^w(x,D)(P-(\lambda+\I)^2)^{-1}\|_{\mathfrak{S}^{2\alpha(q_n)'}(L^2(\R^n))}^2
\lesssim \lambda^{2\delta(q_n)-2}\|W\|_{L^{2(q_n/2)'}(\R^n)}^2.
 \end{align}

The principal symbol of $P$ is given by 
$p(x,\xi)=\g_{\sqrt{\lambda}}^{ij}(x)\xi_i\xi_j$. We smoothly modify $p$ outside $|\xi|\leq 2c^{-1}\lambda$ such that $p(x,\xi)=2\lambda^2$ for $|\xi|\geq 3c^{-1}\lambda$.
It then follows from the definition of $\g_{\sqrt{\lambda}}$ that $p\in \lambda^2C^2S_{\lambda,\lambda^{1/2}}$. 
To put ourselves in the setting of \cite{KochTataru-2005}, we multiply \eqref{microlocalized TTstar resolvent bound} by $\lambda^2$ and redefine
\begin{align}
    p(x,\xi):=\lambda^{-1}(\g_{\sqrt{\lambda}}^{ij}(x)\xi_i\xi_j-\lambda^2)
\end{align}
for $|\xi|\leq 2c^{-1}\lambda$, and $p(x,\xi):=\lambda$ for $|\xi|\geq 3c^{-1}\lambda$. Then $p\in \lambda C^2S_{\lambda,\lambda^{1/2}}$ satisfies the assumptions of \cite[Theorem 2.5]{KochTataru-2005}. Specifically,
\begin{itemize}
    \item[(A1)] $p$ is real-valued and  of principal type, i.e., 
    \begin{align}
        |\partial_{\xi}\,p(x,\xi)|\gtrsim 1,\quad\mbox{in }\Sigma:=\{(x,\xi)\in T^*\R^n:p(x,\xi)=0\}.
    \end{align}
    \item[(A2)] The fibers $\Sigma_x:=\{\xi\in T^*_x\R^n:p(x,\xi)=0\}$ have everywhere non-vanishing Gaussian curvature. More precisely, for the second fundamental form $\mathbf{II}$ of $\Sigma_x$, we have \[|\det \mathbf{II}|\gtrsim \lambda^{1-n}.\]
\end{itemize}
The lower bound in (A2) is natural since the second fundamental form of a symbol in $\lambda C^2S_{\lambda,\lambda^{1/2}}$ has size $\lambda^{-1}$. Koch and Tataru also consider the case when $\Sigma_x$ has only $n-1-k$ nonvanishing curvatures, with $0\leq k\leq n-2$, but we will only need $k=0$ here.

\begin{proposition}\label{prop:qm-bound-KT05}
 Let $p\in \lambda C^2S_{\lambda,\lambda^{1/2}}$ satisfies assumptions {\rm(A1)} and {\rm(A2)} above, and let $\chi\in S_{\lambda,\lambda^{1/2}}$ be supported in $B_{\lambda}$. Then
 \begin{align}\label{KT05 TTstar resolvent bound}
    \|W\chi^w(x,D)(p^w(x,D)\mp 2\I)^{-1}\|_{\mathfrak{S}^{2\alpha(q_n)'}(L^2(\R^n))}
\lesssim \lambda^{\delta(q_n)}\|W\|_{L^{2(q_n/2)'}(\R^n)}
\end{align}
for all $W\in L^{2(q_n/2)'}(\R^n)$ supported in $B(0,1)$.
\end{proposition}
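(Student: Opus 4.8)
The plan is to use the duality principle of Frank and Sabin (Lemma~\ref{lemma duality principle}) to reduce \eqref{KT05 TTstar resolvent bound} to a density bound, and then to pass to a quasimode/propagator representation so that the dispersive estimate \cite[Proposition 4.7]{KochTataru-2005} can be invoked. Concretely, writing $A:=\chi^w(x,D)(p^w(x,D)-2\I)^{-1}$, the bound \eqref{KT05 TTstar resolvent bound} is (by Lemma~\ref{lemma duality principle} with exponents adjusted as in the proof of Proposition~\ref{prop. equivalence of cluster, qm and resolvent}, i.e.\ using the $TT^*$ trick) equivalent to
\begin{align*}
\normSch{\chi^w(x,D)(p^w(x,D)-2\I)^{-1}\gamma(p^w(x,D)+2\I)^{-1}\chi^w(x,D)^*}{\alpha(q_n)}{}\lesssim \lambda^{2\delta(q_n)}\normSch{\gamma}{\alpha(q_n)}{}
\end{align*}
for nonnegative finite-rank $\gamma$, or equivalently to the density bound $\|\rho_{A\gamma A^*}\|_{L^{q_n/2}}\lesssim \lambda^{2\delta(q_n)}\normSch{\gamma}{\alpha(q_n)}{}$. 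Since $q_n=q_n$ and $\alpha(q_n)=(n+1)/n$, $\delta(q_n)=1/q_n=(n-1)/(2(n+1))$, this is exactly the orthonormal Carleson--Sj\"olin type estimate adapted to the frequency-localized operator $p^w$.

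The heart of the matter is then to represent $(p^w(x,D)-2\I)^{-1}$ via the half-wave propagator $e^{-\I t p^w(x,D)}$: formally $(p^w-2\I)^{-1}=\pm\I\int_0^{\pm\infty} e^{\mp 2t}e^{\I t p^w}\,\rd t$, and the $\pm 2\I$ regularization provides the exponential damping that makes the time integral converge and localizes $t$ to $|t|\lesssim 1$. After multiplying by the spatial cutoff coming from $W$ and the symbol cutoff $\chi$, one is reduced to bounding a superposition over $t$ of the operators $W\chi^w(x,D)e^{\I t p^w(x,D)}$; by the one-dimensional Minkowski/Schur argument for the $t$-integral it suffices to control $W\chi^w(x,D)e^{\I t p^w(x,D)}$ uniformly in $|t|\lesssim 1$, in the same Schatten $\to L^q$ sense. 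For the propagator, \cite[Proposition 4.7]{KochTataru-2005} gives the fixed-time dispersive bound: the kernel $K_t(x,y)$ of the frequency-localized propagator satisfies, after the parabolic rescaling $(t,x,\xi)\mapsto(\lambda t,\lambda x,\xi/\lambda)$ that turns $p^w\in\lambda C^2S_{\lambda,\lambda^{1/2}}$ into a bounded symbol, a bound of the form $|K_t(x,y)|\lesssim \lambda^n(1+\lambda|t|)^{-(n-1)/2}$ for $|t|$ in the relevant (rescaled) range, together with the trivial $L^2\to L^2$ boundedness. This is the \eqref{Dunford--Pettis} kernel bound alluded to in the introduction, and combining it with the Dunford--Pettis theorem identifies the $L^1\to L^\infty$ operator norm with the kernel sup-norm.

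The passage from this pointwise kernel bound to the orthonormal (Schatten) estimate is then the standard $TT^*$/interpolation mechanism of Frank and Sabin: the $L^2\to L^2$ bound on $W\chi^w e^{\I t p^w}(\cdot)$ gives a Schatten-$\infty$ (operator norm) bound on $W\chi^w e^{\I t p^w}e^{-\I t p^w}\chi^{w*}\overline W$, while the kernel bound $|K_t(x,y)|\lesssim \lambda^n(1+\lambda|t|)^{-(n-1)/2}$ gives, via Schur/Young, the $L^1\to L^\infty$ (hence Schatten-$1$) bound on the same operator with a factor $\lambda^{2n}(1+\lambda|t|)^{-(n-1)}$; complex interpolation between these two endpoints, followed by integration in $t$ over $|t|\lesssim 1$ with the weight $(1+\lambda|t|)^{-(n-1)\theta}$, produces precisely the Schatten-$\alpha(q_n)'$ bound with the gain $\lambda^{2\delta(q_n)}$ at the exponent $q=q_n$ where the time integral $\int_0^1(1+\lambda|t|)^{-\theta(n-1)}\,\rd t$ is (logarithmically) borderline — this is why $q_n$ is the critical exponent and why only this case needs treatment, the rest following by the interpolation remarks in Subsection~\ref{subsect. remarks on the proof}. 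One must also separately dispose of the high-frequency part of $\chi$ (where $p\equiv\lambda$ and the resolvent is elliptic), which is handled exactly as in Proposition~\ref{prop:elliptic} and Remark~\ref{remark KSS valid for Rn}.

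The main obstacle, and the step requiring the most care, is making the reduction to the propagator rigorous in the Schatten setting rather than merely at the level of a single function: one must verify that the Frank--Sabin duality and the fixed-time dispersive estimate can be combined so that the $t$-superposition representing the resolvent is controlled in Schatten norm uniformly, i.e.\ that the (frequency-localized, parabolically rescaled) propagator of the nonsmooth first-order operator $p^w\in\lambda C^2S_{\lambda,\lambda^{1/2}}$ genuinely obeys the Dunford--Pettis kernel bound on the full $|t|\lesssim 1$ range with constants depending only on the symbol seminorms and the curvature lower bound in (A2). This is exactly where the two-derivatives requirement on the metric enters, via the wave-packet parametrix construction of \cite[Section 4.1]{KochTataru-2005} that underlies \cite[Proposition 4.7]{KochTataru-2005}; the orthonormal upgrade is then purely a matter of replacing scalar $L^2\to L^q$ bounds by their operator/Schatten analogues through the bilinear duality identity \eqref{eq:duality}.
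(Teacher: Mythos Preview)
There are two substantive gaps.

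First, the dispersive estimate you invoke, \cite[Proposition 4.7]{KochTataru-2005}, is not a bound for the full propagator $e^{\I t p^w}$ on $L^2(\R^n)$. It is a bound for the evolution $S(t,r)$ of the \emph{first-order} operator $D_{x_1}-a^w(x_1,x',D')$ on $L^2(\R^{n-1})$, available only after one factorizes $p(x,\xi)=e(x,\xi)\bigl(\xi_1-a(x,\xi')\bigr)$ in a conic region via assumption~(A1). The paper carries out this factorization in Subsection~\ref{subs. factorization} and reduces Proposition~\ref{prop:qm-bound-KT05} to Lemma~\ref{lemma KT05 type}; your sketch skips this step and applies the Koch--Tataru bound to an operator to which it is not stated to apply. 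A related error: the $TT^*$ operator you write, $W\chi^w e^{\I t p^w}e^{-\I t p^w}\chi^{w*}\overline W$, equals $W\chi^w\chi^{w*}\overline W$ since $e^{\I t p^w}$ is unitary, so it carries no $t$-dependence and no dispersive decay can enter there. (Also, an $L^1\to L^\infty$ bound on an operator is \emph{not} a Schatten-$1$ bound; kernel sup control together with compactly supported weights gives $\mathfrak S^2$, not $\mathfrak S^1$.)

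Second, even after the correct factorization and the fixed-time bound \eqref{complex interpolation} on $W(t,\cdot)S(t,r)\chi^w W(r,\cdot)$ in $\mathfrak S^{n+1}(L^2(\R^{n-1}))$, the paper does \emph{not} close by a Minkowski integration in $t$. Instead (proof of Lemma~\ref{lemma KT05 type}) one computes $\normSch{W(D_1-a^w-\I)^{-1}\chi^w W}{n+1}{}^{\,n+1}$ directly as a trace over $L^2(\R^n)=L^2(\R_{x_1},L^2(\R^{n-1}))$, which unfolds as an $(n+1)$-fold integral in $t_1,\dots,t_{n+1}$ with kernel $\prod_j|t_j-t_{j+1}|^{-(n-1)/(n+1)}$, and then applies the multilinear Hardy--Littlewood--Sobolev inequality (Proposition~\ref{HLS multilin}). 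This multilinear step is what makes the estimate close at $q=q_n$ with the correct $L^{n+1}$ norm of $W$; a single Minkowski integration of the fixed-time $\mathfrak S^{n+1}$ bound would not recover it. The ``borderline'' remark in your sketch is a symptom of this: the exponent $(n-1)/(n+1)$ is exactly the one for which HLS is needed rather than a crude triangle inequality.
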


We postpone the proof of Proposition \ref{prop:qm-bound-KT05} to the next subsection. In the remainder of this subsection, we prove that it implies Proposition \ref{prop. C11 qm bound Rn}.
First, we show that \eqref{KT05 TTstar resolvent bound} holds for $\chi$ supported in $B_{2c^{-1}\lambda}$. To this end, we consider a finite covering 
\begin{align}
B_{2c^{-1}\lambda}\subset\bigcup_{j=1}^K B_{\lambda}^j,\quad B_{\lambda}^j:=\{(x,\xi)\in T^*\R^n:|x|\leq 1,|\xi-\xi_j|\leq \lambda\},
\end{align}
and a corresponding partition of unity $\{\chi_j\}_{j=1}^K$.
The assumptions (A1) and (A2) are satisfied for $p(x+x_0,\xi+\xi_0)$, uniformly in $(x_0,\xi_0)\in T^*\R^n$. Applying \eqref{KT05 TTstar resolvent bound} with $\chi_j$ in place of $\chi$ and using the triangle inequality, the claim follows.

\begin{proof}[Proof of Proposition \ref{prop. C11 qm bound Rn}]
It remains to prove \eqref{KT05 TTstar resolvent bound} $\implies$ \eqref{microlocalized TTstar resolvent bound}, which would follow from
\begin{align}\label{end of proof of Proposition C11}
\|W\chi^w((p^w- 2\I)^{-1}-\lambda(P-(\lambda+\I)^2)^{-1})\|_{\mathfrak{S}^{2\alpha(q_n)'}}\lesssim \lambda^{\delta(q_n)}\|W\|_{L^{2(q_n/2)'}}.
\end{align}
By the resolvent identity and the triangle inequality,
\begin{align}
\|W\chi^w&((p^w- 2\I)^{-1}-\lambda(P-(\lambda+\I)^2)^{-1})\|_{\mathfrak{S}^{2\alpha(q_n)'}}\\
&=\|W\chi^w(p^w- 2\I)^{-1}(p^w-\lambda^{-1}(P-\lambda^2+1))\lambda(P-(\lambda+\I)^2)^{-1}\|_{\mathfrak{S}^{2\alpha(q_n)'}}\\    
&\leq \|W(p^w- 2\I)^{-1}\chi^w(p^w-\lambda^{-1}(P-\lambda^2+1))\lambda(P-(\lambda+\I)^2)^{-1}\|_{\mathfrak{S}^{2\alpha(q_n)'}}\\
&\quad+\|W[\chi^w,(p^w- 2\I)^{-1}](p^w-\lambda^{-1}(P-\lambda^2+1))\lambda(P-(\lambda+\I)^2)^{-1}\|_{\mathfrak{S}^{2\alpha(q_n)'}}.
\end{align}
We observe that
\begin{align}
\chi^w(p^w-\lambda^{-1}(P-\lambda^2+1)) \in  OPS_{\lambda,\lambda^{1/2}}
\end{align}
(since its principal symbol vanishes) and $[p^w,\chi^w]\in OPS_{\lambda,\lambda^{1/2}}$. Thus, these operators are $L^2$ bounded, uniformly in $\lambda$. Then, 
\begin{align}
\|W(p^w- 2\I)^{-1}&\chi^w(p^w-\lambda^{-1}(P-\lambda^2+1))\lambda(P-(\lambda+\I)^2)^{-1}\|_{\mathfrak{S}^{2\alpha(q_n)'}}\\
&\leq \|W(p^w- 2\I)^{-1}\|_{\mathfrak{S}^{2\alpha(q_n)'}}\|\chi^w(p^w-\lambda^{-1}(P-\lambda^2+1))\|\|\lambda(P-(\lambda+\I)^2)^{-1}\|\\
&\lesssim \|W(p^w- 2\I)^{-1}\|_{\mathfrak{S}^{2\alpha(q_n)'}}\\
&\lesssim \lambda^{\delta(q_n)}\|W\|_{L^{2(q_n/2)'}},
\end{align}
and similarly,
\begin{align}
&\|W[\chi^w,(p^w- 2\I)^{-1}](p^w-\lambda^{-1}(P-\lambda^2+1))\lambda(P-(\lambda+\I)^2)^{-1}\|_{\mathfrak{S}^{2\alpha(q_n)'}}\\
&=\|W(p^w- 2\I)^{-1}[p^w,\chi^w](p^w- 2\I)^{-1}(p^w-\lambda^{-1}(P-\lambda^2+1))\lambda(P-(\lambda+\I)^2)^{-1}\|_{\mathfrak{S}^{2\alpha(q_n)'}}\\
&\leq \|W(p^w- 2\I)^{-1}\|_{\mathfrak{S}^{2\alpha(q_n)'}}
\|[p^w,\chi^w]\|\|(p^w- 2\I)^{-1}(p^w-\lambda^{-1}(P-\lambda^2+1))\lambda(P-(\lambda+\I)^2)^{-1}\|\\
&\lesssim \|W(p^w- 2\I)^{-1}\|_{\mathfrak{S}^{2\alpha(q_n)'}}\\
&\lesssim \lambda^{\delta(q_n)}\|W\|_{L^{2(q_n/2)'}},
\end{align}
where we used \eqref{KT05 TTstar resolvent bound} in the final inequalities of the last two displays.

\end{proof}

\subsection{Factorization}\label{subs. factorization}

We continue with the reduction of \eqref{KT05 TTstar resolvent bound}.
By a partition of unity argument, we may assume that $\chi$ localizes in $\xi$ to the conic region $\Gamma=\{|\xi'|\ll \xi_1\approx\lambda\}$, where $\xi=(\xi_1,\xi')\in\R\times\R^{n-1}$.  
We will restrict our attention to an $\eps\lambda$-neighborhood of the characteristic set $\Sigma$, where $\eps>0$ is sufficiently small but fixed. For the complement of this set (in the elliptic region), \eqref{microlocalized TTstar resolvent bound} follows again from Proposition~\ref{prop:elliptic}.

We thus assume that $\Gamma$ is contained in an $\eps\lambda$-neighborhood of $\Sigma$, and $|\xi'|\ll \xi_1\approx\lambda$ in $\Gamma$.
By Assumption (A1) and the implicit function theorem, there exist locally defined real-valued smooth functions $a,e$ such that
\begin{align}\label{factorization}
    p(x,\xi)=e(x,\xi)(\xi_1-a(x,\xi',\lambda))\quad \forall (x,\xi)\in\Gamma.
\end{align}
By Assumption (A2), it follows that $\partial_{\xi'}^2a(x,\xi',\lambda)$ is non-degenerate.

We extend $a$ and $e$ globally to symbols in $\lambda C^2S_{\lambda,\lambda^{1/2}}$ and $S_{\lambda,\lambda^{1/2}}$, respectively, with $e$ elliptic. Then $e^w$ has a parametrix 
in $OPS_{\lambda,\lambda^{1/2}}$,
and is thus invertible in $L^2$ for $\lambda\gg 1$ with norm $\mathcal{O}(1)$. By pseudodifferential calculus,
\begin{align}\label{symbol factorization}
  p^w\chi^w=e^w(D_1-a^w)\chi^w+R,
\end{align}
where $R\in OPS_{\lambda,\lambda^{1/2}}$.  
We will show that \eqref{KT05 TTstar resolvent bound}
would follow from the following lemma. 

\begin{lemma}\label{lemma KT05 type}
Let $a\in \lambda C^2S_{\lambda,\lambda^{1/2}}$. Assume that $\partial_{\xi'}^2a(x,\xi',\lambda)$ is non-degenerate, in the sense that
\begin{align}
    |\det \partial_{\xi'}^2a(x,\xi',\lambda)|\gtrsim \lambda^{1-n}.
\end{align}
Let $\chi=\chi(x,\xi')$ be a bump function supported in $B_{\lambda}':=\{(x,\xi'):|x|\leq 1,\,|\xi'|\leq \lambda\}$ and smooth on this scale. Then
\begin{align}
\|W_1(D_1-a^w\mp\I)^{-1}\chi^wW_2\|_{\mathfrak{S}^{\alpha(q_n)'}(L^2(\R^n))} \lesssim \lambda^{2\delta(q_n)}\|W_1\|_{L^{2(q_n/2)'}(\R^n)}\|W_2\|_{L^{2(q_n/2)'}(\R^n)}
\end{align}
for all $W_1,W_2\in L^{2(q_n/2)'}(\R^n)$ supported in the unit ball of $\R^n$.
\end{lemma}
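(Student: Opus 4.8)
The plan is to realize $(D_1-a^w\mp\I)^{-1}$ as a Laplace transform of the unitary propagator of the first-order operator $L:=D_1-a^w(x,D')$, to observe that this propagator is, up to a unitary translation, \emph{decomposable} over the distinguished variable $x_1$, and then to apply the Koch--Tataru dispersive estimate fibrewise on $L^2(\R^{n-1})$, combined with an interpolation in the Schatten scale.

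First I would set up the structure. We may assume $a$ is real-valued (as in the application, where the global extension can be taken real), so that $a^w$ is bounded and self-adjoint on $L^2(\R^n)$, hence so is $L$, and $(D_1-a^w\mp\I)^{-1}=\pm\I\int_0^\infty e^{-s}e^{\mp\I sL}\,ds$, the integral converging in operator norm. Since $a^w$ is bounded, the method of characteristics (equivalently, the Duhamel series off the transport part $D_1$) gives $e^{\mp\I sL}=\mathcal V_s\,S$, where $S$ is the unitary translation $(Sf)(x_1,\cdot)=f(x_1\mp s,\cdot)$ and $\mathcal V_s=\int^{\oplus}V_s(x_1)\,dx_1$ is decomposable over $x_1\in\R$, with $V_s(x_1)$ a \emph{unitary} operator on $L^2(\R^{n-1})$ (the time-$s$ propagator of the $x_1$-dependent Hamiltonian $a^w$ in the distinguished direction). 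Because $\chi=\chi(x,\xi')$ is Weyl-quantized only in $x'$, the operator $\chi^w$ is likewise decomposable, $\chi^w=\int^{\oplus}\chi^w(x_1)\,dx_1$ (its fibre at $x_1$ being $[\chi(x_1,\cdot,\cdot)]^w(x',D_{x'})$), and $W_1,W_2$ are decomposable multiplication operators. Commuting the translation past decomposable operators, $W_1e^{\mp\I sL}\chi^wW_2=\bigl(\int^{\oplus}B_s(x_1)\,dx_1\bigr)S$ with $B_s(x_1):=W_1(x_1)\,V_s(x_1)\,\chi^w(x_1\mp s)\,W_2(x_1\mp s)$ (the base point of $V_s$ shifted accordingly). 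Since $W_1$ and $\chi(\cdot,\cdot)$ are supported in $\{|x|\le1\}$, one has $B_s(x_1)\equiv0$ for $|s|>2$, so only $s\in[0,2]$ contributes.

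Next I would estimate fibrewise and integrate in $s$. For a decomposable operator, $\|\int^{\oplus}B_s(x_1)\,dx_1\|_{\mathfrak{S}^{p}(L^2(\R^n))}^{p}=\int\|B_s(x_1)\|_{\mathfrak{S}^{p}(L^2(\R^{n-1}))}^{p}\,dx_1$, and $S$ is unitary, so it remains to bound $\|B_s(x_1)\|_{\mathfrak{S}^{n+1}}$ (recall $\alpha(q_n)'=n+1$ and $2(q_n/2)'=n+1$). The middle factor $V_s(x_1)\chi^w(x_1\mp s)$ is bounded by $1$ from $L^2(\R^{n-1})$ to itself (unitarity of $V_s(x_1)$) and, by the kernel estimate \eqref{Dunford--Pettis}---which, as noted earlier, follows from \cite[Proposition 4.7]{KochTataru-2005} (stationary phase in $\xi'$, the hypothesis $|\det\partial_{\xi'}^{2}a|\gtrsim\lambda^{1-n}$ supplying $n-1$ nonvanishing curvatures) together with the Dunford--Pettis theorem---by $\lambda^{n-1}\langle\lambda s\rangle^{-(n-1)/2}$ from $L^1(\R^{n-1})$ to $L^\infty(\R^{n-1})$. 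For a product $W_1(x_1)\,K\,W_2(x_1\mp s)$ on $L^2(\R^{n-1})$ one has the endpoint bounds $\|W_1(x_1)KW_2(x_1\mp s)\|_{\mathfrak{S}^2}\le\|K\|_{L^1\to L^\infty}\|W_1(x_1,\cdot)\|_{L^2_{x'}}\|W_2(x_1\mp s,\cdot)\|_{L^2_{x'}}$ and $\|W_1(x_1)KW_2(x_1\mp s)\|_{\mathfrak{S}^\infty}\le\|K\|_{L^2\to L^2}\|W_1(x_1,\cdot)\|_{L^\infty_{x'}}\|W_2(x_1\mp s,\cdot)\|_{L^\infty_{x'}}$, so multilinear (Stein) complex interpolation, at the parameter making the Schatten and Lebesgue exponents both equal $n+1$, yields, with $h_i(x_1):=\|W_i(x_1,\cdot)\|_{L^{n+1}_{x'}}^{n+1}$,
\begin{align*}
\|W_1e^{\mp\I sL}\chi^wW_2\|_{\mathfrak{S}^{n+1}}\lesssim\lambda^{\frac{2(n-1)}{n+1}}\,\langle\lambda s\rangle^{-\frac{n-1}{n+1}}\Bigl(\int_{\R}h_1(x_1)\,h_2(x_1\mp s)\,dx_1\Bigr)^{\frac1{n+1}}.
\end{align*}
Then, by the triangle inequality in $\mathfrak{S}^{n+1}$ and Hölder in $s$ with exponents $n+1$ and $(n+1)/n$,
\begin{align*}
\|W_1(D_1-a^w\mp\I)^{-1}\chi^wW_2\|_{\mathfrak{S}^{n+1}}&\le\int_0^2e^{-s}\|W_1e^{\mp\I sL}\chi^wW_2\|_{\mathfrak{S}^{n+1}}\,ds\\
&\lesssim\lambda^{\frac{2(n-1)}{n+1}}\Bigl(\int_0^2\!\int_{\R}h_1(x_1)h_2(x_1\mp s)\,dx_1\,ds\Bigr)^{\frac1{n+1}}\Bigl(\int_0^2\langle\lambda s\rangle^{-\frac{n-1}{n}}\,ds\Bigr)^{\frac{n}{n+1}},
\end{align*}
where the first factor equals $\|W_1\|_{L^{n+1}}\|W_2\|_{L^{n+1}}$ by Tonelli (as $\int_{\R}h_i=\|W_i\|_{L^{n+1}}^{n+1}$) and the second is $\lesssim(\lambda^{-(n-1)/n})^{n/(n+1)}=\lambda^{-(n-1)/(n+1)}$; collecting powers of $\lambda$ gives $\lambda^{(n-1)/(n+1)}=\lambda^{2\delta(q_n)}$, and since $\|W_i\|_{L^{n+1}}=\|W_i\|_{L^{2(q_n/2)'}}$, this is the asserted bound (valid for both signs).

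The main obstacle is the input into the fibrewise step: identifying the structure of the Koch--Tataru parametrix for the frequency-localized propagator $V_s(x_1)\chi^w(x_1\mp s)$, in particular verifying that $V_s(x_1)$ is exactly unitary and that the right-hand cutoff $\chi^w$ places one in the stationary-phase regime, so that the $L^1\to L^\infty$ dispersive decay is genuinely $\lambda^{n-1}\langle\lambda s\rangle^{-(n-1)/2}$ with the curvature hypothesis $|\det\partial_{\xi'}^{2}a|\gtrsim\lambda^{1-n}$ entering as precisely the $n-1$ nonvanishing curvatures---that is, the reduction of \eqref{Dunford--Pettis} to \cite[Proposition 4.7]{KochTataru-2005}. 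Everything else---the identity for the Schatten norm of a decomposable operator, the bilinear interpolation, and the Hölder bookkeeping in $s$---is routine.
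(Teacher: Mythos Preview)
Your proof is correct and follows a genuinely different route from the paper's. The paper writes the resolvent via the variation-of-constants formula $(D_t-a^w-\I)^{-1}f(t)=\I\int_{-\infty}^t e^{-(t-r)}S(t,r)f(r)\,dr$, then controls $\|W_1(D_t-a^w-\I)^{-1}\chi^wW_2\|_{\mathfrak{S}^{n+1}}^{n+1}$ by expanding it as an $(n+1)$-fold integral over times $t_1,\ldots,t_{n+1}$, bounding the integrand fibrewise via the interpolated estimate \eqref{complex interpolation}, and closing with the multilinear Hardy--Littlewood--Sobolev inequality (Proposition~\ref{HLS multilin}). You instead use the Laplace representation $(D_1-a^w\mp\I)^{-1}=\pm\I\int_0^\infty e^{-s}e^{\mp\I sL}\,ds$ and observe that, for each fixed $s$, the operator $W_1e^{\mp\I sL}\chi^wW_2$ is decomposable over $x_1$ up to a unitary translation, so its $\mathfrak{S}^{n+1}$ norm is simply the $L^{n+1}_{x_1}$ norm of the fibrewise $\mathfrak{S}^{n+1}(L^2(\R^{n-1}))$ norms; Minkowski in $\mathfrak{S}^{n+1}$ and a single H\"older in $s$ then finish the argument. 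Your route is more elementary in that it avoids multilinear HLS altogether and sidesteps the issue of identifying $\|A\|_{\mathfrak{S}^{n+1}}^{n+1}$ with a trace of a power; the paper's route follows the Frank--Sabin template, which does not exploit decomposability and is therefore closer to the general machinery for Schatten bounds on operators with operator-valued integral kernels $K(t,r)$. One small point: the Koch--Tataru dispersive bound is stated for $|t-r|\le\eps$, so your claim that only $s\in[0,2]$ contributes should, as in the paper, be coupled with the reduction to $W_1,W_2$ supported in a ball of radius $\eps/2$ rather than the full unit ball; this is harmless.
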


\begin{remark}\label{remark chi weaker asspt.}
The assumption on $\chi$ in Lemma \ref{lemma KT05 type} is weaker than in Proposition \ref{prop:qm-bound-KT05} since $B_{\lambda}\subset B_{\lambda}'$. 
\end{remark}

For the proof of Lemma \ref{lemma KT05 type}, we will use
the multilinear Hardy-Littlewood-Sobolev inequality, recalled below.
\begin{proposition}\label{HLS multilin}
    Assume that $(\beta_{ij})_{1\leq i,j\leq N}$ and $(r_k)_{1\leq k\leq N}$ are real numbers such that for all $1\leq i, j, k\leq N$
    \begin{align*}
    \beta_{ii}=0, \quad 0\leq \beta_{ij} =\beta_{ji}< 1,\quad r_k>1,\quad \sum_{k=1}^N\frac{1}{r_k}>1,\quad \sum_{i=1}^N \beta_{ik}=\frac{2(r_k-1)}{r_k} .
    \end{align*}
    Then there exists $C>0$ such that 
    \[\left| \int_\R\ldots\int_\R \frac{f_1(t_1)\dots f_N(t_N)}{\prod_{i<j}\abs{t_i-t_j}^{\beta_{ij}}} dt_1\dots dt_N \right| \leq C \prod_{k=1}^N\normLp{f_k}{r_k}{(\R)} \]
    for all $1\leq k\leq N$, $f_k \in L^{r_k}(\R)$.
\end{proposition}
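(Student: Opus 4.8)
\textbf{Plan of proof for Proposition \ref{HLS multilin}.}
The strategy is to reduce the multilinear inequality to the classical (bilinear) Hardy--Littlewood--Sobolev inequality on $\R$ by a sequence of Cauchy--Schwarz-type splittings of the kernel, organized as an induction on $N$. First I would record the base case: for $N=2$ the hypotheses force $\beta_{12}=\beta_{21}=\beta\in[0,1)$ and $2(r_k-1)/r_k=\beta$ for $k=1,2$, so $r_1=r_2=2/(2-\beta)$, and the claim is exactly the one-dimensional Hardy--Littlewood--Sobolev inequality $\bigl|\int\int f_1(t_1)f_2(t_2)|t_1-t_2|^{-\beta}\rd t_1\rd t_2\bigr|\lesssim \|f_1\|_{L^{r_1}}\|f_2\|_{L^{r_2}}$, valid because $\beta\in(0,1)$ gives $1/r_1+1/r_2=2-\beta\in(1,2)$ (the case $\beta=0$ being Hölder, using $1/r_1+1/r_2>1$). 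I would cite this as the standard HLS inequality on the line.

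For the inductive step, the idea is to integrate out one variable, say $t_N$, by writing the kernel's dependence on $t_N$ as a product $\prod_{i<N}|t_i-t_N|^{-\beta_{iN}}$ and distributing the exponents $\beta_{iN}$ among the remaining $N-1$ variables via an exponent-splitting lemma. Concretely: since $\sum_{i<N}\beta_{iN}=2(r_N-1)/r_N<2$, one can choose nonnegative numbers $\theta_{ij}$ ($i,j<N$) and auxiliary exponents so that, after applying Hölder in $t_N$ to pair $f_N$ against the $t_N$-kernel and then the bilinear HLS inequality in each pair $(t_i,t_N)$, the integral over $t_N$ is bounded by a product of fractional-integral outputs that can be reabsorbed into new kernel weights $|t_i-t_j|^{-\beta'_{ij}}$ with the induction hypotheses again satisfied. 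The cleanest way to organize this is via a single global application of a generalized Hölder / Schur-type estimate: one picks dual exponents $p_i$ attached to each $f_i$ and to each kernel factor $|t_i-t_j|^{-\beta_{ij}}$ (thinking of $|t_i-t_j|^{-\beta_{ij}}=|t_i-t_j|^{-\beta_{ij}/2}\cdot|t_i-t_j|^{-\beta_{ij}/2}$ and distributing the two halves to the two ``ends''), and checks that the resulting system of linear constraints on the $p_i$ has a solution precisely under the stated hypotheses $0\le\beta_{ij}<1$, $r_k>1$, $\sum 1/r_k>1$, $\sum_i\beta_{ik}=2(r_k-1)/r_k$. This is the ``Gagliardo--Nirenberg for the HLS simplex'' bookkeeping and is where all the arithmetic lives.

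The main obstacle I anticipate is verifying the solvability of this linear system (the ``admissibility'' of the exponents) in full generality: one must show that the constraints $\beta_{ii}=0$, $\beta_{ij}=\beta_{ji}\in[0,1)$, $r_k>1$, $\sum_k 1/r_k>1$, $\sum_i\beta_{ik}=2(1-1/r_k)$ are exactly what is needed for the iterated Hölder/HLS scheme to close, with no extra constraints appearing and with each intermediate exponent lying in the admissible range $(1,\infty)$ (or $[1,\infty)$ at the endpoints, handled by plain Hölder). A convenient way to sidestep delicate casework is to interpolate: the two extreme configurations — all $\beta_{ij}=0$ (pure Hölder, needing $\sum 1/r_k>1$) and the ``diagonal'' configurations where only one pair is coupled — span, by multilinear complex interpolation (Stein--Weiss / the multilinear Riesz--Thorin theorem applied to the family of operators $(f_1,\dots,f_N)\mapsto \int\prod|t_i-t_j|^{-\beta_{ij}}\prod f_i$), the full admissible region, so one only needs the inequality at finitely many vertices plus the convexity of the constraint set. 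I would present the induction-plus-splitting argument as the main line and mention interpolation as the device that makes the exponent arithmetic painless; either way the only genuinely nontrivial analytic input is the classical one-dimensional HLS inequality, everything else being Hölder, Fubini, and linear algebra.
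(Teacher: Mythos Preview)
The paper does not prove this proposition: it is introduced with the phrase ``we will use the multilinear Hardy--Littlewood--Sobolev inequality, recalled below'' and then only \emph{applied} (in the cyclic case $\beta_{i,i+1}=\beta_{i+1,i}=\tfrac{n-1}{n+1}$, all other $\beta_{ij}=0$, $r_k=\tfrac{n+1}{2}$) in the proof of Lemma~\ref{lemma KT05 type}. So there is no proof in the paper to compare your proposal against; the authors treat it as a known black box.

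Regarding your proposal itself: the overall shape (reduce to bilinear HLS by induction on $N$, using H\"older to peel off one variable at a time) is a standard and viable route, and your base case is correct. However, what you have written is a plan rather than a proof, and you correctly flag the real difficulty: the inductive step requires showing that after integrating out $t_N$ one lands back in the admissible class of exponents for $N-1$ variables. You do not actually carry this out, and the interpolation alternative you sketch (``span the admissible region by vertices'') is also not executed---in particular you would need to identify the vertices, verify the inequality there, and check that multilinear complex interpolation applies to this family. Since the paper only needs the cyclic case, a more economical route for the application at hand is to prove that case directly: with $\beta_{i,i+1}=\beta$ and all $r_k=r$, one can iterate the bilinear HLS inequality $N$ times around the cycle without any exponent-splitting bookkeeping, and the arithmetic closes immediately. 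If you want the general statement, the cleanest published arguments go through either the Brascamp--Lieb framework or a direct multilinear interpolation as in Christ's work, rather than an ad hoc induction.
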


\begin{proof}[Proof of Lemma \ref{lemma KT05 type}]
As in \cite{KochTataru-2005}, we will denote coordinates by $(t,x)\in\R\times\R^{n-1}$ instead of $(x_1,x')$ in the following. 
 Let $S(t,r):L^2(\R^{n-1})\to L^2(\R^{n-1})$ denote the unitary propagator associated to the family of bounded self-adjoint operators $a^w(t,x',D')$ on $L^2(\R^{n-1})$, parametrized by $t$. It is the unique solution to the Cauchy problem
\begin{equation}
\begin{cases*}
(D_t-a^w(t,x',D'))S(t,r) =0, \\
S(r,r)=\mathbf{1}.
\end{cases*}
\end{equation}
Identifying $L^2(\R^n)$ with $L^2(\R,L^2(\R^{n-1}))$, we have, for $f\in L^2(\R,L^2(\R^{n-1}))$,
\begin{align}
(D_t-a^w(t,x',D')-\I)^{-1}f(t)&=\I\int_{-\infty}^t\e^{-(t-r)}S(t,r)f(r)\rd r,\label{variation of parameters}\\
(D_t-a^w(t,x',D')+\I)^{-1}f(t)&=-\I\int_{t}^{\infty}\e^{(t-r)}S(t,r)f(r)\rd r.\label{variation of parameters 2}
\end{align}
We will prove the estimate in Lemma \ref{lemma KT05 type} for \eqref{variation of parameters}; the proof of \eqref{variation of parameters 2} is analogous.

Koch and Tataru \cite[Proposition 4.7]{KochTataru-2005} proved the following dispersive estimate,
\begin{align}\label{dispersive estimate Koch-Tataru}
    \|S(t,r)\chi^w\|_{L^1(\R^{n-1})\to L^{\infty}(\R^{n-1})}\lesssim |t-r|^{-\frac{n-1}{2}}\lambda^{\frac{n-1}{2}},\quad |t-r|\leq \eps,
\end{align}
for some $\eps>0$. 
By the Dunford--Pettis theorem \cite[Theorem 2.2.5]{Dunford-Pettis-1940-Linear} the operator 
\begin{align}
    K(t,r):=\e^{-(t-r)}\mathbf{1}_{r\leq t}S(t,r)\chi^w
\end{align}
has a kernel satisfying the bound
\begin{align}\label{Dunford--Pettis}
    \|K(t,r)\|_{L^{\infty}(\R^{n-1}\times \R^{n-1})}
    \lesssim |t-r|^{-\frac{n-1}{2}}\lambda^{\frac{n-1}{2}},\quad |t-r|\leq \eps.
\end{align}
We denoted both the operator and the kernel by $K(t,r)$. Assuming, as we may, that $W$ is supported in a ball of radius $\eps/2$, then \eqref{Dunford--Pettis} yields the Hilbert--Schmidt bound
\begin{align}\label{Hilbert--Schmidt}
&\|W(t,\cdot)K(t,r)W(r,\cdot)\|_{\mathfrak{S}^{2}(L^2(\R^{n-1}))}   
 \\&\qquad
 \lesssim |t-r|^{-\frac{n-1}{2}}\lambda^{\frac{n-1}{2}}\|W(t,\cdot)\|_{L^2(\R^{n-1})}\|W(r,\cdot)\|_{L^2(\R^{n-1})}.
\end{align}
Unitarity of $S(t,r)$ implies the operator norm bound
\begin{align}\label{operator norm}
 \|W(t,\cdot)K(t,r)W(r,\cdot)\|_{L^2(\R^{n-1})\to L^2(\R^{n-1})}   
 \lesssim \|W(t,\cdot)\|_{L^{\infty}(\R^{n-1})}\|W(r,\cdot)\|_{L^{\infty}(\R^{n-1})}.
\end{align}
Using the analytic operator family $W(t,\cdot)^zK(t,r)W(r,\cdot)^z$, complex interpolation between \eqref{Hilbert--Schmidt} and \eqref{operator norm} then yields 
\begin{align}\label{complex interpolation}
 \|W(t,\cdot)&K(t,r)W(r,\cdot)\|_{\mathfrak{S}^{n+1}(L^2(\R^{n-1}))}\\   
 &\lesssim \lambda^{\frac{n-1}{n+1}}|t-r|^{\frac{n-1}{n+1}}\|W(t,\cdot)\|_{L^{n+1}(\R^{n-1})}\|W(r,\cdot)\|_{L^{n+1}(\R^{n-1})}.
\end{align}
This, together with \eqref{variation of parameters} and Proposition \ref{HLS multilin}, yields
\begin{align}
 &\normSch{W_1(D_t-a^w-\I)^{-1}\chi^wW_2}{n+1}{}^{n+1}=\tr_{L^2(\R^n)}[W_1(D_t-a^w-\I)^{-1}\chi^wW_2]^{n+1}\\
&=\int_{\R}\ldots \int_{\R}\tr_{L^2(\R^{n-1})}[W(t_1,\cdot)K(t_1,t_2)W(t_2,\cdot)^2K(t_2,t_3)W(t_3,\cdot)^2\ldots\\
&\qquad\qquad\quad\ldots W(t_{n+1},\cdot)^2 K(t_{n+1},t_1)W(t_1,\cdot)]\rd t_1\ldots \rd t_{n+1}\\
&\leq \int_{\R}\ldots \int_{\R}\|W(t_j,\cdot)K(t_j,t_{j+1})W(t_{j+1},\cdot)\|_{\mathfrak{S}^{n+1}(L^2(\R^{n-1}))}  \rd t_1\ldots \rd t_{n+1}
\\&\lesssim
\lambda^{n-1}\int_{\R}\ldots \int_{\R}\prod_{j=1}^{n+1}\frac{\|W(t_j,\cdot)\|_{L^{n+1}(\R^{n-1})}\|W(t_{j+1},\cdot)\|_{L^{n+1}(\R^{n-1})}}{|t_j-t_{j+1}|^{\frac{n-1}{n+1}}} \rd t_1\ldots \rd t_{n+1}
\\&\lesssim \lambda^{n-1}\|W\|_{L^{n+1}(\R^{n})}^{2(n+1)}.
\end{align}
We used Proposition \ref{HLS multilin} with $t_{n+2}:=t_1$, $N=n+1$, $r_k=\frac{n+1}{2}$, $\beta_{i,i+1}=\beta_{i+1,i}=\frac{n-1}{n+1}$ and all other $\beta_{ij}=0$. 
\end{proof}

\begin{proof}[Proof of Proposition \ref{prop:qm-bound-KT05}]
Using \eqref{symbol factorization}, a straighforward calculation shows that
\begin{align}
    \chi^w (p^w\mp 2\I)^{-1}=(D_1-a^w-\I)^{-1}(e^w)^{-1}\chi^w+(D_1-a^w-\I)^{-1}R_{\pm}(p^w\mp 2\I)^{-1},
\end{align}
where
\begin{align}
     R_{\pm}=(e^w)^{-1}([p^w,\chi^w]\pm 2\I\chi^w-R)-\I\chi^w\in C^2OPS_{\lambda,\lambda^{1/2}}.
\end{align}
If $\tilde{\chi}\in S_{\lambda,\lambda^{1/2}}$ is supported in $B_{\lambda}$ and such that $\tilde{\chi}=1$ on $\supp\chi$, then
\begin{align}
 \chi^w (p^w\mp 2\I)^{-1}&=\tilde{\chi}^w(D_1-a^w-\I)^{-1}(e^w)^{-1}\chi^w
 +\tilde{\chi}^w(D_1-a^w-\I)^{-1}R_{\pm}(p^w\mp 2\I)^{-1}\\
&\quad +(1-\tilde{\chi}^w)\chi^w(p^w\mp 2\I)^{-1}.
\end{align}
Since $(1-\tilde{\chi}^w)\chi^w$ is a smoothing operator, we have
\begin{align}
 \|W(1-\tilde{\chi}^w)\chi^w(p^w\mp 2\I)^{-1}\|_{\mathfrak{S}^{2\alpha(q_n)'}} \lesssim \|W(1-\tilde{\chi}^w)\chi^w\|_{\mathfrak{S}^{2}}   \lesssim_N \lambda^{-N}\|W\|_{L^2}\lesssim \lambda^{-N}\|W\|_{L^{2(q_n/2)'}}
\end{align}
for any $N>0$, where the last inequality follows by H\"older's inequality and the compact support assumption on $W$.
We conclude that 
\begin{align*}
 \|W\chi^w(p^w\mp 2\I)^{-1}\|_{\mathfrak{S}^{2\alpha(q_n)'}} \lesssim_N 
 \|W\tilde{\chi}^w(D_1-a^w-\I)^{-1}\|_{\mathfrak{S}^{2\alpha(q_n)'}}+\lambda^{-N}\|W\|_{L^{2(q_n/2)'}}.
\end{align*}
By the resolvent identity and Lemma \ref{lemma KT05 type}, we have
\begin{align}
\|W\tilde{\chi}^w(D_1-a^w-\I)^{-1}\|_{\mathfrak{S}^{2\alpha(q_n)'}}^2&=\|W\tilde{\chi}^w(D_1-a^w-\I)^{-1}(D_1-a^w+\I)^{-1}\tilde{\chi}^w\overline{W}\|_{\mathfrak{S}^{\alpha(q_n)'}}\\
&=2\sum_{\pm}\|W\tilde{\chi}^w(D_1-a^w\mp\I)^{-1}\tilde{\chi}^w\overline{W}\|_{\mathfrak{S}^{\alpha(q_n)'}}\\
&\lesssim \lambda^{2\delta(q_n)}\|W\|_{L^{2(q_n/2)'}}^2.
\end{align}
We have used Remark \ref{remark chi weaker asspt.} and that \eqref{Dunford--Pettis} also holds for $\chi^wK(t,r)$ since
\begin{align}
    \|\chi^wS(t,r)\chi^w\|_{L^{1}(\R^{n-1})\to L^{\infty}(\R^{n-1})}\leq \|\chi^w\|_{L^{\infty}(\R^{n-1})\to L^{\infty}(\R^{n-1})}\|S(t,r)\chi^w \|_{L^{1}(\R^{n-1})\to L^{\infty}(\R^{n-1})}
\end{align}
and $\|\chi^w\|_{L^{\infty}\to L^{\infty}}\lesssim 1$ in view of the kernel bound
\begin{align}
 |\chi^w(x,y)|\leq C_N \lambda^{n-1}(1+\lambda|x-y|)^{-N}\quad\forall N>0,\quad\forall x,y\in \R^{n-1}   
\end{align}
and Young's inequality.
\end{proof}

\section{The case $s\in[1,2)$ (proof of Theorem \ref{thm:main-result_Lip})}\label{sec:proof-Lip-case}
In this Section, we prove Theorem \ref{thm:main-result_Lip}. 
It suffices to prove~\eqref{eq:spectral cluster bounds} for $q=q_n$. This would follow from the analogue of \eqref{local bound qm} with $\delta_s(q_n)$ in place of $\delta(q_n)$. The reductions in Subsection \ref{sect. Reduction to Euclidean space} leading to Proposition \ref{prop. C11 qm bound Rn} only require Lipschitz coefficients\footnote{More precisely, $C^1$ (continuously differentiable) rather than Lipschitz, but we shall not make a distinction between the two. The reductions in the Lipschitz case follow from the quadratic form methods in Section \ref{sec:proof-Holder-case}.}, as does the elliptic regularity estimate \eqref{eliptic regularity D^2}. It is thus sufficient to prove the following analogue of Proposition \ref{prop. C11 qm bound Rn}. 

\begin{proposition}\label{prop. Lipschitz qm bound Rn}
Let $n\geq 2$ and $s\in [1,2)$. Assume that the coefficients $\g^{ij}$ are of class $C^{s}$. 
Let $P$ be given by \eqref{P=dgd}.
Then
\begin{align}\label{Lipschitz qm bound Rn}
\normLp{\rho_{\gamma}}{q_n/2}{(Q)}\lesssim\lambda^{2\delta_s(q_n)}(\normSch{\gamma}{\alpha(q_n)}{(L^2(\R^n))}+\lambda^{-2}\normSch{(P-\lambda^2)\gamma (P-\lambda^2)}{\alpha(q_n)}{(L^2(\R^n))})
\end{align}
for any unit cube $Q\subset\R^n$ and any finite-rank operator $\gamma\geq 0$ with $\Ran\gamma\subset H^2(\R^n)$. The implicit constant depends on the $C^{s}$ norms of $\g^{ij}$ and the ellipticity constant $c_{\rm ell}$, but is independent of $\lambda\geq 1$, $Q$, and $\gamma$.
\end{proposition}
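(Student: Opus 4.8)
The plan is to run the argument of Section~\ref{sec:proof-C2-case} at the reduced frequency $\mu:=R\lambda=\lambda^{2s/(2+s)}$, where $R:=\lambda^{-(2-s)/(2+s)}$, after rescaling to the scale $R$ on which $C^s$ coefficients look like $C^{1,1}$ coefficients. As in Subsection~\ref{subsect. remarks on the proof}, the endpoint $q=\infty$ is the one-function bound \eqref{Linfty bounds low regularity} via Proposition~\ref{Prop.Linfty}, $q=2$ is trivial, and the intermediate $q$ follow by interpolation, so only $q=q_n$ is at issue. The spatial localization and change-of-Hilbert-space reductions of Subsection~\ref{sect. Reduction to Euclidean space}, the elliptic estimate of Proposition~\ref{prop:elliptic} (valid for all $s\in[0,2]$), the microlocalization of Subsection~\ref{subsec. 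Frequency-localized estimates} and the factorization of Subsection~\ref{subs. factorization} all use only $C^1$ regularity, so I would carry them out verbatim. The one place where the $C^{1,1}$ argument genuinely uses two derivatives --- the regularization at frequency $\sqrt\lambda$ and the absorption of the resulting error by elliptic regularity in \eqref{eq:smoothing bounds}--\eqref{eliptic regularity D^2} --- fails for $s<2$ (the error is of size $\lambda^{(2-s)/(2+s)}=R^{-1}\gg1$), so this step must be postponed until after the rescaling.

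Conjugating by the $L^2$-unitary dilation $U_Rf(x)=R^{n/2}f(Rx)$ turns $P$ into $R^{-2}$ times $-\partial_i\widetilde\g^{ij}\partial_j$ with $\widetilde\g^{ij}(y)=\g^{ij}(Ry)$, whose $C^s$-seminorm is $R^s$, and turns the spectral window $[\lambda,\lambda+1]$ into the \emph{narrow} window $[\mu,\mu+R]$. The identity $R^s(\sqrt\mu)^{2-s}\simeq1$ holds exactly for this choice of $R$, so after regularizing $\widetilde\g$ at frequency $\sqrt\mu$ one gets a genuine $\mu C^2S_{\mu,\mu^{1/2}}$ symbol with the ellipticity constant preserved, and the regularization error is of size $R^s(\sqrt\mu)^{-s}\simeq\mu^{-1}$, absorbed by elliptic regularity just as in Subsection~\ref{sect. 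Reduction to Euclidean space} with $\lambda$ replaced by $\mu$. Hence Proposition~\ref{prop. C11 qm bound Rn} applies at frequency $\mu$ on the unit cube, and the width-$R$ window produces, at the critical exponent $q_n$, a gain of a power of $R$ that cancels the Jacobian of the rescaling; undoing the rescaling yields the no-loss estimate \eqref{small_ball_estimates-Lip} on every cube $Q_R$ of sidelength $R$, and, running the same argument in the distinguished $x_1$-direction furnished by the factorization, the no-loss $L^{q_n/2}$ bound for $\rho_\gamma$ on every slab $S_R^c=\{|x_1-c|\le R\}$ \emph{with the full operator $\gamma$ on the right-hand side}. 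Equivalently, this slab estimate follows directly from Smith's energy flux estimate \cite[equation~(15)]{Smith-2006sharp}, which provides the analogue of the Koch--Tataru dispersive bound \eqref{dispersive estimate Koch-Tataru} valid on ``time'' intervals of length $R$ for $C^s$ coefficients: for weights $W$ supported in a slab of width $R$ the kernel of $W\chi^w(D_1-a^w\mp\I)^{-1}\chi^w\overline W$ lives in $\{|x_1-r|\le2R\}$, so the scale-invariant multilinear Hardy--Littlewood--Sobolev argument of Lemma~\ref{lemma KT05 type} runs with no loss, irrespective of the extent of $W$ in the transverse variables, giving
\[
\bigl\|W\chi^w(D_1-a^w\mp\I)^{-1}\chi^w\overline{W}\bigr\|_{\mathfrak{S}^{\alpha(q_n)'}(L^2(\R^n))}\lesssim\lambda^{2\delta(q_n)}\|W\|_{L^{2(q_n/2)'}(\R^n)}^2 .
\]

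It remains to sum over slabs. Writing $X:=\normSch{\gamma}{\alpha(q_n)}{(L^2(\R^n))}+\lambda^{-2}\normSch{(P-\lambda^2)\gamma(P-\lambda^2)}{\alpha(q_n)}{(L^2(\R^n))}$, the slabs $S_R^c$, $c\in R\Z$, partition the unit cube $Q$, so
\[
\normLp{\rho_\gamma}{q_n/2}{(Q)}^{q_n/2}=\sum_{c\in R\Z}\normLp{\rho_\gamma}{q_n/2}{(S_R^c\cap Q)}^{q_n/2}\lesssim R^{-1}\,\lambda^{q_n\delta(q_n)}\,X^{q_n/2},
\]
and the factor $R^{-2/q_n}=\lambda^{2(\delta_s(q_n)-\delta(q_n))}$ upgrades $\lambda^{2\delta(q_n)}$ to $\lambda^{2\delta_s(q_n)}$; summing the finitely many conic pieces of the factorization via Lemma~\ref{lemma triangle inequality} then proves Proposition~\ref{prop. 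Lipschitz qm bound Rn}.

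The main obstacle is the no-loss slab estimate. Rescaling by itself produces a Jacobian factor $R^{-1}$ in the $L^{q_n/2}$-norm of the density, and removing it forces one to exploit either the narrowness (width $R$) of the rescaled spectral window, or equivalently the fact that Smith's energy flux estimate confines the relevant dispersive behaviour to ``time'' scales $\lesssim R$; making this work at the level of operators --- and of orthonormal systems, for which the narrow-window improvement does not follow directly from \cite{FrankSabin-2017clust} --- is the delicate point, together with the bookkeeping that keeps the slab decomposition on the \emph{weight} side so that the full Schatten norm of $\gamma$, rather than a lossy spatial truncation $\chi_{S_R^c}\gamma\chi_{S_R^c}$, appears in each slab estimate. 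As noted in the remarks on optimality, the final summation over the $\simeq R^{-1}$ slabs is presumably itself not sharp.
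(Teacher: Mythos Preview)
Your overall architecture is the paper's: rescale by $R=\lambda^{-(2-s)/(2+s)}$, apply the $C^{1,1}$ result (Proposition~\ref{prop. C11 qm bound Rn}) at frequency $\mu=\lambda R$, obtain a no-loss $L^{q_n/2}$ bound on each slab $S_R$, then sum over $R^{-1}$ slabs to incur the $R^{-2/q_n}=\lambda^{2(\delta_s(q_n)-\delta(q_n))}$ loss. The reductions you list (spatial localization, elliptic estimate, microlocalization, factorization) only need $C^1$ and indeed carry over verbatim, as the paper does.

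The gap is in your justification of the no-loss slab estimate. Rescaling and applying Proposition~\ref{prop. C11 qm bound Rn} gives (this is the paper's Lemma~\ref{lemma rescaling})
\[
\|\rho_\gamma\|_{L^{q_n/2}(Q_R)}\lesssim \lambda^{2\delta(q_n)}R^{-1}\Bigl(\|\gamma\|_{\schatten^{\alpha(q_n)}}+R^2\lambda^{-2}\|(P-\lambda^2)\gamma(P-\lambda^2)\|_{\schatten^{\alpha(q_n)}}\Bigr),
\]
with an $R^{-1}$ loss that is \emph{not} cancelled by the narrow window. The narrow window (the extra $R^2$) helps only the quasimode term; the main term $\|\gamma\|_{\schatten^{\alpha(q_n)}}$ still carries the full $R^{-1}$. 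Your claim that the dispersive/HLS argument of Lemma~\ref{lemma KT05 type} runs ``irrespective of the extent of $W$ in the transverse variables'' does not follow: at the original scale the factorized symbol $a$ is only in $\lambda C^1S_{\lambda,\lambda}$ (coefficients regularized at $\lambda$), for which no Koch--Tataru dispersive bound is available; after rescaling to frequency $\mu$ the symbol is $C^{1,1}$ but the cutoff $\chi$ in \eqref{dispersive estimate Koch-Tataru} is compactly supported in $x$, so the bound is on a unit cube, not a slab.

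What the paper actually does is remove the $R^{-1}$ by an \emph{energy estimate} (Lemmas~\ref{lemma:energy_estimates_slabs-to-cube}--5.5), not a dispersive one. After summing cubes $Q_R\subset S_R$ one is left with $R^{-1}\|\widetilde\eta(D)\gamma\widetilde\eta(D)\|_{\schatten^{\alpha(q_n)}(L^2(S_R^*))}$, and this is controlled by the full right-hand side via the first-order resolvent bound
\[
\|\mathbf{1}_{S_R^*}(D_t-a^w-\I)^{-1}\|_{L^2(\R^n)\to L^2(\R^n)}^2\lesssim R,
\]
which needs only unitarity of the propagator $S(t,r)$ (hence only $a\in\lambda S_{\lambda,\lambda}$, available for Lipschitz coefficients). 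This is the operator version of Smith's energy flux estimate you cite, but it is an $L^2$ propagation bound, not ``the analogue of the Koch--Tataru dispersive bound''; keeping that distinction is exactly what makes the argument go through for $s<2$.
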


\subsection{Regularization and rescaling}
We regularize the coefficients at the coarser scale $\lambda^{\frac{2}{2+s}}$ compared to the $C^{1,1}$ case, i.e., we set $\g_{s}: = \varphi(\lambda^{-\frac{2}{2+s}}D) \g$. Then
\begin{align}\label{eq:smoothing bounds 2}
	 \|\g^{ij}-\g^{ij}_{s}\|_{L^{\infty}(\R^n)}&\lesssim \lambda^{-\frac{2s}{2+s}}\|\g^{ij}\|_{C^{s}(\R^n)}.
\end{align}
The smoothing scale is chosen so that, after rescaling to a cube of sidelength $R:=\lambda^{-\frac{2-s}{2+s}}$, the regularity of the smoothed coefficients and the size of the errors behave as in the $C^{1,1}$ case, with respect to the new frequency $\mu:=\lambda R=\lambda^{\frac{2s}{2+s}}$. The following lemma proves~\eqref{small_ball_estimates-Lip}.

\begin{lemma}\label{lemma rescaling}
Let $n\geq 2$ and $s\in [1,2)$. Then
\begin{align}
\normLp{\rho_{\gamma}}{q_n/2}{(Q_R)}\lesssim\lambda^{2\delta(q_n)}R^{-1}(\normSch{\gamma}{\alpha(q_n)}{(L^2(Q_R^*))}+R^2\lambda^{-2}\normSch{(P-\lambda^2)\gamma (P-\lambda^2)}{\alpha(q_n)}{(L^2(Q_R^*))})
\end{align}
for any cube $Q_R$ of sidelength $R:=\lambda^{-\frac{2-s}{2+s}}$ and any finite-rank operator $\gamma\geq 0$ with $\Ran\gamma\subset H^2(\R^n)$. Here, $Q_R^*$ is the double of $Q_R$.
\end{lemma}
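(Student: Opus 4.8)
The plan is to deduce the lemma from the $C^{1,1}$ quasimode bound, Proposition~\ref{prop. C11 qm bound Rn}, by a frequency-dependent dilation. Rescaling $Q_R$ to a unit cube converts the $C^s$ problem at frequency $\lambda$ into one in which, after mollification at the dilated frequency $\mu:=\lambda R=\lambda^{\frac{2s}{2+s}}$ (note $\mu\geq1$ since $s\geq1$), the coefficients are uniformly of class $C^{1,1}$; this is exactly what the scale $R=\lambda^{-\frac{2-s}{2+s}}$ is designed to achieve. As a preliminary step, as in Subsection~\ref{sect. Reduction to Euclidean space}, I would replace $\gamma$ by $\psi\gamma\psi$, where $\psi$ is a bump function adapted to $Q_R^*$ with $\psi\equiv1$ on $Q_R$, absorbing the commutator $[P,\psi]$ into the two terms on the right via Lemma~\ref{lemma gradient term} (only $C^1$ coefficients are needed); this is what makes the right-hand side a Schatten norm over $L^2(Q_R^*)$, and after it the density $\rho_{\psi\gamma\psi}$ and the range of $\sqrt{\psi\gamma\psi}$ are supported in $Q_R^*$, so $\gamma$ may be regarded as an operator on $L^2(Q_R^*)$ extended by zero.

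Next I would rescale. Let $x_0$ be the center of $Q_R$ and $U_Rf(y):=R^{n/2}f(x_0+Ry)$, a unitary on $L^2(\R^n)$. Conjugation by $U_R$ preserves all Schatten norms, sends $\rho_\gamma(x_0+R\,\cdot\,)$ to $R^{-n}\rho_{\tilde\gamma}$ with $\tilde\gamma:=U_R\gamma U_R^{-1}$, and sends $P=-\partial_i\g^{ij}\partial_j$ to $R^{-2}\tilde P$ with $\tilde P:=-\partial_{y_i}\tilde\g^{ij}\partial_{y_j}$, $\tilde\g(y):=\g(x_0+Ry)$; hence $P-\lambda^2\mapsto R^{-2}(\tilde P-\mu^2)$. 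A direct computation gives $\|\rho_\gamma\|_{L^{q_n/2}(Q_R)}=R^{-2n(\frac12-\frac1{q_n})}\|\rho_{\tilde\gamma}\|_{L^{q_n/2}(Q_1)}$ and $\|(P-\lambda^2)\gamma(P-\lambda^2)\|_{\mathfrak{S}^{\alpha(q_n)}}=R^{-4}\|(\tilde P-\mu^2)\tilde\gamma(\tilde P-\mu^2)\|_{\mathfrak{S}^{\alpha(q_n)}}$, while $\|\tilde\g\|_{C^s(\R^n)}\lesssim\|\g\|_{C^s(\R^n)}$ uniformly in $x_0$ and $R\leq1$, with unchanged ellipticity constant. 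Setting $\tilde\g_s:=\varphi(\mu^{-1/2}D_y)\tilde\g$ — which, since $\mu^{1/2}=R\,\lambda^{2/(2+s)}$, is just the $U_R$-dilate of $\g_s=\varphi(\lambda^{-2/(2+s)}D)\g$ — Bernstein's inequalities yield, uniformly in $x_0,R,\lambda$,
\[
\|\tilde\g_s\|_{C^{1,1}(\R^n)}\lesssim\|\g\|_{C^s},\qquad
\|\tilde\g-\tilde\g_s\|_{L^\infty}\lesssim\mu^{-1}\|\g\|_{C^s},\qquad
\|\partial(\tilde\g-\tilde\g_s)\|_{L^\infty}\lesssim\mu^{-1/2}\|\g\|_{C^s},
\]
which are exactly the $C^{1,1}$-case estimates \eqref{g close to Euclidean C11}--\eqref{eq:smoothing bounds} with $\lambda$ replaced by $\mu$.

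Now $\tilde P_s:=-\partial_{y_i}\tilde\g_s^{ij}\partial_{y_j}$ has $C^{1,1}$ coefficients with norm $\lesssim\|\g\|_{C^s}$ and, for $\lambda\gg1$, ellipticity constant comparable to $c_{\rm ell}$, so Proposition~\ref{prop. C11 qm bound Rn} applies to $\tilde P_s$ with spectral parameter $\mu$ and cube $Q_1$, giving $\|\rho_{\tilde\gamma}\|_{L^{q_n/2}(Q_1)}\lesssim\mu^{2\delta(q_n)}\big(\|\tilde\gamma\|_{\mathfrak{S}^{\alpha(q_n)}}+\mu^{-2}\|(\tilde P_s-\mu^2)\tilde\gamma(\tilde P_s-\mu^2)\|_{\mathfrak{S}^{\alpha(q_n)}}\big)$. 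To pass from $\tilde P_s$ to $\tilde P$ on the right I would repeat the absorption argument of Subsection~\ref{sect. Reduction to Euclidean space}: write $(\tilde P_s-\mu^2)\sqrt{\tilde\gamma}=(\tilde P-\mu^2)\sqrt{\tilde\gamma}+(\tilde P_s-\tilde P)(\tilde P+\mu^2)^{-1}(\tilde P+\mu^2)\sqrt{\tilde\gamma}$ and use $\|(\tilde P_s-\tilde P)(\tilde P+\mu^2)^{-1}\|\lesssim\mu^{-1}$ — which follows from the two $L^\infty$ smoothing bounds above and the elliptic estimates $\|D_y^2(\tilde P+\mu^2)^{-1}\|\lesssim1$, $\|D_y(\tilde P+\mu^2)^{-1}\|\lesssim\mu^{-1}$ (valid since $\tilde\g$ is uniformly $C^1$ and elliptic) — together with $\|(\tilde P+\mu^2)\sqrt{\tilde\gamma}\|_{\mathfrak{S}^{2\alpha(q_n)}}\lesssim\|(\tilde P-\mu^2)\sqrt{\tilde\gamma}\|_{\mathfrak{S}^{2\alpha(q_n)}}+\mu^2\|\sqrt{\tilde\gamma}\|_{\mathfrak{S}^{2\alpha(q_n)}}$ and $\mu\geq1$. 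This yields $\mu^{-2}\|(\tilde P_s-\mu^2)\tilde\gamma(\tilde P_s-\mu^2)\|_{\mathfrak{S}^{\alpha(q_n)}}\lesssim\|\tilde\gamma\|_{\mathfrak{S}^{\alpha(q_n)}}+\mu^{-2}\|(\tilde P-\mu^2)\tilde\gamma(\tilde P-\mu^2)\|_{\mathfrak{S}^{\alpha(q_n)}}$. Undoing the dilation and using $\mu=\lambda R$ with $\mu^{2\delta(q_n)}R^{-2n(\frac12-\frac1{q_n})}=\lambda^{2\delta(q_n)}R^{-1}$ (equivalently $2(n+1)/q_n=n-1$) then produces precisely the asserted bound.

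I do not expect a substantial analytic obstacle: the whole argument is a rescaled copy of the reductions already made in Subsection~\ref{sect. Reduction to Euclidean space}, and the only genuinely new inputs are the scaling identities and Bernstein's inequalities. The point requiring the most care is the dimensional bookkeeping — checking that the single choice $R=\lambda^{-\frac{2-s}{2+s}}$ makes $\|\partial_y^2\tilde\g_s\|_{L^\infty}$ of size $O(1)$ \emph{and} the two smoothing errors of sizes $O(\mu^{-1})$, $O(\mu^{-1/2})$ at once, and that all implicit constants are uniform in the center $x_0$ of $Q_R$, in $R$, and in $\lambda$, so that the lemma can be summed over a cover of $M$ by cubes $Q_R$ in the proof of Proposition~\ref{prop. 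Lipschitz qm bound Rn}.
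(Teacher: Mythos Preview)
Your approach is essentially the one the paper takes: rescale $Q_R$ to a unit cube, observe that the mollified coefficients are uniformly $C^{1,1}$ at the new frequency $\mu=\lambda R$, apply Proposition~\ref{prop. C11 qm bound Rn}, remove the mollification perturbatively, and undo the dilation. The scaling bookkeeping you record is correct, and your remark that mollifying before or after rescaling is equivalent is exactly right.

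The one step that needs more care than you indicate is the preliminary localization $\gamma\mapsto\psi\gamma\psi$ with $\psi$ adapted to $Q_R^*$. You propose to absorb $[P,\psi]$ ``via Lemma~\ref{lemma gradient term}'', but that lemma is a \emph{global} gradient bound, and here $\psi$ lives at scale $R\ll1$, so $|\nabla\psi|\sim R^{-1}$. A direct application gives
\[
R^2\lambda^{-2}\big\|[P,\psi]\sqrt{\gamma}\big\|_{\mathfrak{S}^{2\alpha}}^2
\;\lesssim\; \lambda^{-2}\|\nabla\sqrt{\gamma}\|_{\mathfrak{S}^{2\alpha}}^2
\;\lesssim\; \|\gamma\|_{\mathfrak{S}^{\alpha}}+\lambda^{-2}\|(P-\lambda^2)\gamma(P-\lambda^2)\|_{\mathfrak{S}^{\alpha}},
\]
which is (i) not localized to $Q_R^*$ and (ii) carries the prefactor $\lambda^{-2}$ rather than the required $R^2\lambda^{-2}$; since $R\ll1$ this goes the wrong way. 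The fix is routine: use a \emph{localized} Caccioppoli estimate (pair $(P-\lambda^2)u$ against $\psi^2\bar u$) to get $\|\psi\nabla u\|_{L^2}^2\lesssim(\lambda^2+R^{-2})\|u\|_{L^2(Q_R^*)}^2+\|u\|_{L^2(Q_R^*)}\|(P-\lambda^2)u\|_{L^2(Q_R^*)}$, and then lift to Schatten norms by interpolation between $\alpha=1$ and $\alpha=\infty$. With $R^{-2}\lambda^{-2}=\mu^{-2}\leq1$ and $\lambda^{-4}\leq R^2\lambda^{-2}$ (i.e.\ $\mu\geq1$), this produces exactly the two terms on the right with the correct $R^2$ factor and with norms over $L^2(Q_R^*)$. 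The paper's own proof in fact glosses over this localization entirely (it writes $L^2(Q^*)$ on the right-hand side when invoking Proposition~\ref{prop. C11 qm bound Rn}, which as stated only yields $L^2(\R^n)$), so you are not missing anything the paper provides---just flag that Lemma~\ref{lemma gradient term} alone does not suffice here.
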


\begin{remark}
    By slight abuse of notation, we interpret, for example, the first term in the above inequality as
    \begin{align}
\normSch{\gamma}{\alpha(q_n)}{(L^2(Q_R^*))}:=\normSch{\mathbf{1}_{Q^*}\gamma\mathbf{1}_{Q^*}}{\alpha(q_n)}{(L^2(\R^n))},
    \end{align}
and we use analogous notation throughout the remainder of the article.  
\end{remark}

\begin{proof}
By Remark \ref{remark divergence versus non-divergence form}, we are free to replace $P=-\partial_i\g^{ij}\partial_j$ by $-\g^{ij}\partial_i\partial_j$.
We rescale via the unitary transformation
\begin{align}
U_R:L^2(\R^n)\to L^2(\R^n),\quad    U_Rf(x):=R^{n/2}f(Rx).
\end{align}
Note that $U_R:L^2(Q_R)\to L^2(Q)$, where $Q$ is a unit cube, and similarly for $Q^*,Q_R^*$.
If we set $\g_{R,s}(x):=\g_s(Rx)$, then
\begin{align}\label{def. P_R}
P_R:=-R^2 U_R(\g_s^{ij}\partial_i\partial_j)U_R^*=-\g_{R,s}^{ij}\partial_i\partial_j.
\end{align}
In view of \eqref{eq:smoothing bounds 2}, the coefficients of $\g_{R,s}$ have uniformly bounded $C^2$ norms,
\begin{align}
    |D^2\g^{ij}_{R,s}(x)|\lesssim R^2\lambda^{(2-s)\frac{2}{2+s}}=1.
\end{align}
Moreover, in terms of the new frequency $\mu=\lambda R$, the right-hand side of \eqref{eq:smoothing bounds 2} is of size $\mu^{-1}$, while the coefficients of $\g_{R,s}$ are truncated to frequencies of size at most $\mu^{1/2}$. Applying Proposition \ref{prop. C11 qm bound Rn} to $P_R$ and $\gamma_R:=U_R\gamma U_R^*$ at frequency $\mu$, we obtain
\begin{align}
 &\|\rho_{\gamma_R}\|_{L^{q_n/2}(Q)}
 \lesssim \mu^{2\delta(q_n)}
 (\normSch{\gamma_R}{\alpha(q_n)}{(L^2(Q^*))}+\mu^{-2}\normSch{(P_R-\mu^2)\gamma_R(P_R-\mu^2)}{\alpha(q_n)}{(L^2(Q^*))})\\
 &\qquad
 =\mu^{2\delta(q_n)}
 (\normSch{\gamma}{\alpha(q_n)}{(L^2(Q_R^*))}+\mu^{-2}R^4\normSch{(-\g_s^{ij}\partial_i\partial_j-\lambda^2)\gamma(-\g_s^{ij}\partial_i\partial_j-\lambda^2)}{\alpha(q_n)}{(L^2(Q_R^*))}),
\end{align}
where we used the unitary invariance of Schatten norms and \eqref{def. P_R} in the second line. Using the spectral decomposition of $\gamma$, one observes that
\begin{align}
\normLp{\rho_{\gamma_R}}{q_n/2}{(Q)}=R^{n(1-\frac{2}{q_n})}\normLp{\rho_{\gamma}}{q_n/2}{(Q_R)}=R^{2\delta(q_n)+1}\normLp{\rho_{\gamma}}{q_n/2}{(Q_R)}.
\end{align}
The result follows for the regularized operator $-\g_s^{ij}\partial_i\partial_j$. The same bound for the unregularized operator $P=-\g^{ij}\partial_i\partial_j$ follows perturbatively since
\begin{align}
 &\mu^{-2}R^4\normSch{(\g^{ij}-\g^{ij}_{s})\partial_i\partial_j\sqrt{\gamma}}{2\alpha(q_n)}{(L^2(Q_R^*))}^2\lesssim \mu^{-4}R^4 
 \normSch{(P+\lambda^2)\sqrt{\gamma}}{2\alpha(q_n)}{(L^2(Q_R^*))}^2\\
 &\lesssim \lambda^{-4} (\lambda^4\normSch{\sqrt{\gamma}}{2\alpha(q_n)}{(L^2(Q_R^*))}^2+\normSch{(P-\lambda^2)\sqrt{\gamma}}{2\alpha(q_n)}{(L^2(Q_R^*))}^2)\\
 &\lesssim \normSch{\sqrt{\gamma}}{2\alpha(q_n)}{(L^2(Q_R^*))}^2+R^2\lambda^{-2}\normSch{(P-\lambda^2)\sqrt{\gamma}}{2\alpha(q_n)}{(L^2(Q_R^*))}^2,
\end{align}
where we used \eqref{eq:smoothing bounds 2}, H\"older's inequality and elliptic regularity in the first inequality, and the fact that $\lambda^{-2}\leq R^2$ in the last inequality.
\end{proof}

\begin{remark}\label{lemma Lip cube sharp}
Since the estimate in Lemma \ref{lemma rescaling} is simply a rescaled version of \eqref{local bound qm Rn} in Proposition \ref{prop. C11 qm bound Rn}, we expect the Schatten exponent $\alpha(q_n)$ to be optimal.    
\end{remark}

\subsection{Energy estimate}\label{subsec:energy-estimates}
In the next step, we regularize $\g$ at the scale $\lambda$. In this subsection, we denote $\g_{\lambda}: = \varphi(\lambda^{-1} D) \g$ and $P_{\lambda}:=-\g_{\lambda}^{ij}\partial_i\partial_j$. Then
\begin{align}\label{smoothing to lambda scale}
	 \|\g^{ij}-\g^{ij}_{\lambda}\|_{L^{\infty}(\R^n)}&\lesssim \lambda^{-1}\|\g^{ij}\|_{C^1(\R^n)}.
\end{align}
Since \eqref{Lipschitz qm bound Rn} is stable under metric perturbations satisfying \eqref{smoothing to lambda scale}, we may replace $P$ by $P_{\lambda}$ there. This will allow us to use conical localization, as in Subsection \ref{subs. factorization}.

\begin{lemma}\label{lemma:energy_estimates_slabs-to-cube} 
Let $\eta\in C_c^{\infty}(\R^n)$ be supported in a conic region $\Gamma=\{|\xi'|\ll \xi_1\approx\lambda\}$.
Let $c\in \R$, $S_R:=\{x\in\R^n:|x_1-c|\leq R\}$ and $S_R^*:=S_{2R}$, where $R:=\lambda^{-\frac{2-s}{2+s}}$. Then, under the assumptions of Proposition \ref{prop. Lipschitz qm bound Rn},
\begin{align}
\normLp{\rho_{\eta(D)\gamma\eta(D)}}{q_n/2}{(S_R)}\lesssim\lambda^{2\delta(q_n)}\big(\normSch{\gamma}{\alpha(q_n)}{(L^2(\R^n))}+\lambda^{-2}\normSch{(P_{\lambda}-\lambda^2)\gamma (P_{\lambda}-\lambda^2)}{\alpha(q_n)}{(L^2(\R^n))}\big)
\end{align}
for any finite-rank operator $\gamma\geq 0$ with $\Ran\gamma\subset H^2(\R^n)$. The implicit constant is independent of $c$, $R$, $\lambda$ and $\gamma$.
\end{lemma}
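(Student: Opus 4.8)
The plan is to transcribe the chain of reductions from the $C^{1,1}$ case (Section~\ref{sec:proof-C2-case}), carrying the slab restriction along, and to replace the one ingredient that fails for rough coefficients --- the Koch--Tataru dispersive estimate \cite[Proposition~4.7]{KochTataru-2005} --- by Smith's energy flux estimate valid on the short time scale $R$. First I would dispose of the conic cutoffs: since $\|\eta(D)\|_{L^2\to L^2}\le 1$ and $\|[P_\lambda,\eta(D)]\|_{L^2\to L^2}\lesssim\lambda$ (because $\|\g_\lambda\|_{C^1}=O(1)$), the argument of Lemma~\ref{lemma gradient term} and the remark opening Subsection~\ref{subsec:energy-estimates} let us replace $\eta(D)\gamma\eta(D)$ by $\gamma$ on the right-hand side, so it suffices to bound the density of $\gamma$ on $S_R$. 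Then, by the duality principle (Lemma~\ref{lemma duality principle}) together with the resolvent reformulation (Remark~\ref{remark equivalence with A} and the reductions of Section~\ref{sec:proof-C2-case}), and by the microlocalization and factorization steps of Subsections~\ref{subsec. Frequency-localized estimates}--\ref{subs. factorization} (including the removal of the elliptic region via Proposition~\ref{prop:elliptic}, valid for all $s\in[0,2]$), applied with the scale-$\lambda$-regularized symbol $p_\lambda$ of $P_\lambda-\lambda^2$ in place of $p$, it suffices to prove
\begin{align*}
\|W_1(D_1-a^w\mp\I)^{-1}\chi^wW_2\|_{\mathfrak{S}^{\alpha(q_n)'}(L^2(\R^n))}\lesssim\lambda^{2\delta(q_n)}\|W_1\|_{L^{2(q_n/2)'}}\|W_2\|_{L^{2(q_n/2)'}}
\end{align*}
for $W_1,W_2$ supported in the slab $S_R=\{|x_1-c|\le R\}$, where $\chi$ is a microlocal cutoff into the cone $\Gamma$ (into which $\eta$ has been absorbed) and $D_1-a^w$ comes from the conic factorization \eqref{symbol factorization} of $p_\lambda^w$. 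The only difference from the $C^{1,1}$ case is that, because $\g$ is merely $C^s$ regularized at the coarser scale $\lambda$ (so $\|\partial_x^2\g_\lambda\|_\infty\lesssim\lambda^{2-s}$), the factor $a$ lies only in $\lambda C^1S_{\lambda,\lambda}$ rather than in the Koch--Tataru class $\lambda C^2S_{\lambda,\lambda^{1/2}}$; the first-order calculus still applies, so the error operators remain $L^2$-bounded uniformly in $\lambda$ and can be disposed of exactly as in the proof of Proposition~\ref{prop:qm-bound-KT05}, together with the perturbative replacement of $P$ by $P_\lambda$.

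To prove the displayed bound I would follow the proof of Lemma~\ref{lemma KT05 type}: express the resolvent via the Duhamel formula \eqref{variation of parameters} in terms of the unitary propagator $S(t,r)$ of $D_t-a^w$, $t=x_1$. Because $W_1,W_2$ are supported in $\{|x_1-c|\le R\}$, the $r$-integration runs only over $|t-r|\le 2R$, so the key input becomes the \emph{short-time} dispersive estimate
\begin{align*}
\|S(t,r)\chi^w\|_{L^1(\R^{n-1})\to L^\infty(\R^{n-1})}\lesssim|t-r|^{-\frac{n-1}{2}}\lambda^{\frac{n-1}{2}},\qquad|t-r|\le R=\lambda^{-\frac{2-s}{2+s}},
\end{align*}
which is Smith's energy flux estimate \cite[equation~(15)]{Smith-2006sharp}: for the scale-$\lambda$-regularized operator, $R$ is the largest time interval on which a wave-packet parametrix adapted to $\g_\lambda$ propagates without distortion. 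Granting this estimate, the rest of the proof of Lemma~\ref{lemma KT05 type} transcribes verbatim --- Dunford--Pettis upgrades it to the kernel bound \eqref{Dunford--Pettis} for $|t-r|\le R$, complex interpolation against the operator-norm bound coming from unitarity of $S(t,r)$ gives the $\mathfrak{S}^{n+1}$ estimate on $W(t,\cdot)K(t,r)W(r,\cdot)$ with weight $|t-r|^{-(n-1)/(n+1)}$, and the multilinear Hardy--Littlewood--Sobolev inequality (Proposition~\ref{HLS multilin}) closes the estimate with precisely the no-loss exponent $\lambda^{2\delta(q_n)}$; the restriction $|t-r|\le 2R$ is harmless since that weight is locally integrable. (Tiling $S_R$ by cubes of sidelength $R$ and summing the rescaled bound of Lemma~\ref{lemma rescaling} would also prove a slab bound, but a lossy one --- it carries an extra factor $R^{-1}$, whereas the energy flux estimate is what lets one control the whole slab at once.)

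The main obstacle is this short-time dispersive estimate on the sharp scale $R=\lambda^{-(2-s)/(2+s)}$: it is exactly where the $C^s$ regularity and the choice of $R$ interact, and it is not a formal rescaling of the Koch--Tataru estimate, since after regularizing at scale $\lambda$ and rescaling by $R$ the coefficients sit at frequency $\mu=\lambda R$ rather than at $\mu^{1/2}$, with $C^2$ norm still growing in $\lambda$. One therefore has to import (or reprove) Smith's wave-packet energy-flux analysis for scale-$\lambda$-regularized $C^s$ coefficients and check that it is compatible with the conic localization $\chi$ and with the error terms produced by the change of quantization and the factorization. Everything downstream of this estimate is a routine copy of the $C^{1,1}$ argument, and the slab restriction itself only shrinks the interval over which the Duhamel integral runs.
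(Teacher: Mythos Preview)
Your approach diverges from the paper's and, while conceptually coherent, rests on a misidentification that leaves the key step open.

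The paper does exactly what you dismiss in your parenthetical: it tiles $S_R$ by cubes $Q_R$, applies Lemma~\ref{lemma rescaling} on each, and sums (using $\ell^{\alpha(q_n)}\hookrightarrow\ell^{q_n/2}$), incurring the factor $R^{-1}$ you flag. The point you miss is that this loss is \emph{recovered}, not by a dispersive estimate, but by the $L^2$ energy flux bound. After tiling one arrives at
\begin{align*}
\normLp{\rho_{\eta(D)\gamma\eta(D)}}{q_n/2}{(S_R)}\lesssim\lambda^{2\delta(q_n)}R^{-1}\Big(\normSch{\widetilde{\eta}(D)\gamma\widetilde{\eta}(D)}{\alpha(q_n)}{(L^2(S_R^*))}+R^2\lambda^{-2}\normSch{(P_\lambda-\lambda^2)\gamma(P_\lambda-\lambda^2)}{\alpha(q_n)}{(L^2(S_R^*))}\Big),
\end{align*}
and then proves the energy estimate
\begin{align*}
R^{-1}\normSch{\widetilde{\eta}(D)\gamma\widetilde{\eta}(D)}{\alpha(q_n)}{(L^2(S_R^*))}\lesssim\normSch{\gamma}{\alpha(q_n)}{(L^2(\R^n))}+\lambda^{-2}\normSch{(P_\lambda-\lambda^2)\gamma(P_\lambda-\lambda^2)}{\alpha(q_n)}{(L^2(\R^n))}.
\end{align*}
This is what Smith's equation~(15) says: a conically localized quasimode has $L^2$-mass $O(R)$ on a slab of width $R$. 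After the factorization you describe, it reduces to the elementary bound $\|\mathbf{1}_{S_R^*}(D_t-a^w\mp\I)^{-1}\mathbf{1}_{S_R^*}\|\lesssim R$, which follows from unitarity of $S(t,r)$ and Cauchy--Schwarz alone --- no dispersion is needed here beyond what already went into Lemma~\ref{lemma rescaling}.

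Your route would instead require a short-time $L^1\to L^\infty$ dispersive estimate for the scale-$\lambda$-regularized propagator on $|t-r|\le R$. That is \emph{not} Smith's energy flux estimate, and as you correctly note, it is not a formal rescaling of Koch--Tataru either: the rescaled coefficients sit at frequency $\mu$ rather than $\mu^{1/2}$, with $C^2$ norm of order $\lambda^{s(2-s)/(2+s)}\gg1$. Establishing it would require a further regularization at scale $\lambda^{2/(2+s)}$ and then the rescaling argument --- in effect re-deriving the dispersive content of Lemma~\ref{lemma rescaling} inside this proof. That is doable but circuitous; the paper's route packages all the dispersive analysis into Lemma~\ref{lemma rescaling} once and for all, and handles the slab with a purely $L^2$ argument needing only $a\in\lambda C^1S_{\lambda,\lambda}$ and unitarity of the propagator.
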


\begin{proof}
Applying Lemma \ref{lemma rescaling} to $P_{\lambda}$, raising both sides to the power $q_n/2$ and summing over cubes $Q_R$ contained in $S_R$, we obtain
\begin{align}\label{mess}
\normLp{\rho_{\gamma}}{q_n/2}{(S_R)}&\lesssim\lambda^{2\delta(q_n)}R^{-1}\Bigg(\bigg(\sum_{Q_R\subset S_R}\normSch{\gamma}{\alpha(q_n)}{(L^2(Q_R^*))}^{q_n/2}\bigg)^{2/q_n}\\
&\qquad\qquad\qquad
+R^2\lambda^{-2}\left.\bigg(\sum_{Q_R\subset S_R}\normSch{(P_{\lambda}-\lambda^2)\gamma (P_{\lambda}-\lambda^2)}{\alpha(q_n)}{(L^2(Q_R^*))}^{q_n/2}\bigg)^{2/q_n}\right)
\\
&\lesssim
\lambda^{2\delta(q_n)}R^{-1}\Bigg(\bigg(\sum_{Q_R\subset S_R}\normSch{\gamma}{\alpha(q_n)}{(L^2(Q_R^*))}^{\alpha(q_n)}\bigg)^{1/\alpha(q_n)}\\
&\qquad\qquad\qquad
+R^2\lambda^{-2}\bigg(\sum_{Q_R\subset S_R}\normSch{(P_{\lambda}-\lambda^2)\gamma (P_{\lambda}-\lambda^2)}{\alpha(q_n)}{(L^2(Q_R^*))}^{\alpha(q_n)}\bigg)^{1/\alpha(q_n)}\Bigg)\\
&\leq\lambda^{2\delta(q_n)}R^{-1}\bigg(\normSch{\gamma}{\alpha(q_n)}{(L^2(S_R^*))}
+R^2\lambda^{-2}\normSch{(P_{\lambda}-\lambda^2)\gamma (P_{\lambda}-\lambda^2)}{\alpha(q_n)}{(L^2(S_R^*))}\bigg),
\end{align}
where we used that $\ell^{\alpha(q_n)}\subset \ell^{q_n/2}$, due to $q_n/2=(n+1)/(n-1)>(n+1)/n=\alpha(q_n)$, in the second inequality. The last inequality follows by interpolation between $\alpha=1$ and $\alpha=\infty$; the argument is the same as in the proof of \cite[Lemma 6.11]{Nguyen-2022}.

Using the commutator bound
\begin{align}
    \|[\g^{ij}_{\lambda},\eta(D)]\|\lesssim \lambda^{-1},
\end{align}
together with elliptic regularity, we obtain
\begin{align}
 \|[P_{\lambda},\eta(D)](P_{\lambda
 }+\lambda^2)^{-1}\|
 \lesssim
\lambda^{-1}\|D^2(P_{\lambda}+\lambda^2)^{-1}\|\lesssim\lambda^{-1}. 
\end{align}
This implies that
\begin{align}
\lambda^{-2}\|(P_{\lambda}-\lambda^2)\eta(D)\sqrt{\gamma}\|_{\mathfrak{S}^{2\alpha(q_n)}}^2
&\lesssim \lambda^{-2}\|\eta(D)(P_{\lambda}-\lambda^2)\sqrt{\gamma}\|_{\mathfrak{S}^{2\alpha(q_n)}}^2+\lambda^{-2}\|[P_{\lambda},\eta(D)]\sqrt{\gamma}\|_{\mathfrak{S}^{2\alpha(q_n)}}^2\\
&\lesssim \lambda^{-2}\|(P_{\lambda}-\lambda^2)\sqrt{\gamma}\|_{\mathfrak{S}^{2\alpha(q_n)}}^2+\lambda^{-4}\|(P_{\lambda}+\lambda^2)\sqrt{\gamma}\|_{\mathfrak{S}^{2\alpha(q_n)}}^2\\
&\lesssim \|\sqrt{\gamma}\|_{\mathfrak{S}^{2\alpha(q_n)}}^2+\lambda^{-2}\|(P_{\lambda}-\lambda^2)\sqrt{\gamma}\|_{\mathfrak{S}^{2\alpha(q_n)}}^2.
\end{align}
Therefore, applying \eqref{mess} to $\eta(D)\gamma\eta(D)$ yields
\begin{align}
\normLp{\rho_{\eta(D)\gamma\eta(D)}}{q_n/2}{(S_R)}\lesssim\lambda^{2\delta(q_n)}R^{-1}&\big(\normSch{\widetilde{\eta}(D)\gamma\widetilde{\eta}(D)}{\alpha(q_n)}{(L^2(S_R^*))}\\
& +R^2\lambda^{-2}\normSch{(P_{\lambda}-\lambda^2)\gamma (P_{\lambda}-\lambda^2)}{\alpha(q_n)}{(L^2(S_R^*))}\big),
\end{align}
where $\widetilde{\eta}$ is supported in $\Gamma$ and equal to $1$ on the support of $\eta$.
It remains to prove 
\begin{align}\label{energy estimate Lip.}
    R^{-1}\normSch{\widetilde{\eta}(D)\gamma\widetilde{\eta}(D)}{\alpha(q_n)}{(L^2(S_R^*))}\lesssim& \normSch{\gamma}{\alpha(q_n)}{(L^2(\R^n))}\\
    &+\lambda^{-2}\normSch{(P_{\lambda}-\lambda^2)\gamma (P_{\lambda}-\lambda^2)}{\alpha(q_n)}{(L^2(\R^n))}.
\end{align}
This is similar to \cite[Eq. (15)]{Smith-2006sharp}, but we will give a different proof in the next lemma. 
\end{proof}

By the same arguments as in Subsection \ref{factorization}, \eqref{energy estimate Lip.} follows from the following energy estimate for a first-order operator.

\begin{lemma}
Assume that $a(t,x',\xi',\lambda)\in \lambda S_{\lambda,\lambda}$. Then
\begin{align}
R^{-1}\normSch{\gamma}{\alpha(q_n)}{(L^2(S_R^*))}\lesssim \normSch{\gamma}{\alpha(q_n)}{(L^2(\R^n))}+\normSch{(D_t-a^w)\gamma (D_t-a^w)}{\alpha(q_n)}{(L^2(\R^n))}.
\end{align}
\end{lemma}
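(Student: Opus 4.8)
The plan is to convert the Schatten bound into two $L^2\to L^2$ operator estimates by means of the identity $\gamma=\sqrt{\gamma}\,(\sqrt{\gamma})^{*}$. We may assume the right-hand side is finite, so $\Ran(\gamma)=\Ran(\sqrt{\gamma})\subset H^1(\R^n)$; writing $B:=\sqrt{\gamma}$ and recalling that $D_t-a^w$ is self-adjoint (here $a$ is real-valued, as it arises from the factorization in Subsection~\ref{subs. factorization}), we have $\normSch{\gamma}{\alpha(q_n)}{(L^2(S_R^*))}=\|\mathbf{1}_{S_R^*}B\|_{\mathfrak{S}^{2\alpha(q_n)}}^2$, $\normSch{\gamma}{\alpha(q_n)}{(L^2(\R^n))}=\|B\|_{\mathfrak{S}^{2\alpha(q_n)}}^2$ and $\normSch{(D_t-a^w)\gamma(D_t-a^w)}{\alpha(q_n)}{(L^2(\R^n))}=\|(D_t-a^w)B\|_{\mathfrak{S}^{2\alpha(q_n)}}^2$. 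Hence the assertion is equivalent to
\begin{align}
\|\mathbf{1}_{S_R^*}B\|_{\mathfrak{S}^{2\alpha(q_n)}}^2\lesssim R\,\Big(\|B\|_{\mathfrak{S}^{2\alpha(q_n)}}^2+\|(D_t-a^w)B\|_{\mathfrak{S}^{2\alpha(q_n)}}^2\Big).
\end{align}

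Let $S(t,r)$ be the unitary propagator of the bounded self-adjoint family $a^w(t,x',D')$ on $H:=L^2(\R^{n-1})$, so that $(D_t-a^w)S(t,r)=0$ and $S(r,r)=\mathbf{1}$. Identifying $L^2(\R^n)$ with $L^2(\R_t;H)$, any $u\in L^2(\R^n)$ with $v:=(D_t-a^w)u\in L^2(\R^n)$ satisfies $\partial_t u=\I(a^w u+v)\in L^2(\R^n)$, hence $u\in H^1(\R_t;H)$, and the variation-of-parameters formula
\begin{align}\label{plan-Duhamel}
u(t)=S(t,r)u(r)+\I\int_r^t S(t,\tau)v(\tau)\,\rd\tau
\end{align}
holds for every base point $r\in\R$. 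I would apply \eqref{plan-Duhamel} to $u=Bf$, $v=(D_t-a^w)Bf$.

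Since the right-hand side of \eqref{plan-Duhamel} is independent of $r$, averaging it over $r$ in the interval $J:=\{t\in\R:|t-c|\le\max(\tfrac12,2R)\}$ — which contains $S_R^*$ and has length $1\le|J|\le4$ because $R=\lambda^{-\frac{2-s}{2+s}}\le1$ — and then restricting $t$ to $S_R^*$ yields the operator identity $\mathbf{1}_{S_R^*}B=\mathcal A_1 B+\mathcal A_2(D_t-a^w)B$, where $\mathcal A_1 g(t):=\mathbf{1}_{S_R^*}(t)|J|^{-1}\int_J S(t,r)g(r)\,\rd r$ and $\mathcal A_2 h(t):=\I\,\mathbf{1}_{S_R^*}(t)|J|^{-1}\int_J\big(\int_r^t S(t,\tau)h(\tau)\,\rd\tau\big)\rd r$. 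For $t\in S_R^*$ and $r\in J$ the inner integration range $[\min(r,t),\max(r,t)]$ lies in $J$, so unitarity of $S$ and Cauchy--Schwarz (in $r$, respectively in $\tau$) give $\|\mathcal A_1 g(t)\|_H\le|J|^{-1/2}\|g\|_{L^2(\R^n)}$ and $\|\mathcal A_2 h(t)\|_H\le|J|^{1/2}\|h\|_{L^2(\R^n)}$ for every $t\in S_R^*$; integrating the squares over $S_R^*$ and using $|S_R^*|=4R$ together with $1\le|J|\le4$ yields $\|\mathcal A_1\|_{L^2(\R^n)\to L^2(\R^n)}^2+\|\mathcal A_2\|_{L^2(\R^n)\to L^2(\R^n)}^2\lesssim R$. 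Combining this with $\|AB\|_{\mathfrak{S}^{2\alpha(q_n)}}\le\|A\|\,\|B\|_{\mathfrak{S}^{2\alpha(q_n)}}$, the triangle inequality in $\mathfrak{S}^{2\alpha(q_n)}$, and $(x+y)^2\le2x^2+2y^2$ proves the displayed inequality, hence the lemma.

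The only subtle point is the opening move, that is, controlling the slab-localized Schatten norm of $\gamma$ by the $L^2\to L^2$ norms of $\mathcal A_1$ and $\mathcal A_2$ acting on $\sqrt{\gamma}$; after that one is left with the elementary energy estimate for the transport equation $D_t u=a^w u$, which, thanks to unitarity of $S(t,r)$, is lossless except for the volume factor $|S_R^*|\sim R$. In particular no dispersive information about $a^w$ is needed here, only that $a$ is real, so that $a^w$ is self-adjoint and $S(t,r)$ is unitary. (For general $\gamma$ one would localize from both sides, but $\gamma\ge0$ is all that is required in the application.)
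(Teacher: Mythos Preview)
Your proof is correct and rests on the same ingredients as the paper's: the reduction $\normSch{\gamma}{\alpha(q_n)}{(L^2(S_R^*))}=\|\mathbf{1}_{S_R^*}\sqrt{\gamma}\|_{\mathfrak{S}^{2\alpha(q_n)}}^2$, unitarity of the propagator $S(t,r)$, and the factor $R$ coming from the slab width. The implementation differs, however. The paper inserts the resolvent, writing $\mathbf{1}_{S_R^*}\sqrt{\gamma}=\mathbf{1}_{S_R^*}(D_t-a^w-\I)^{-1}(D_t-a^w-\I)\sqrt{\gamma}$, and then proves the operator bound $\|\mathbf{1}_{S_R^*}(D_t-a^w-\I)^{-1}\|^2\lesssim R$ via the resolvent identity and the explicit formulas \eqref{variation of parameters}--\eqref{variation of parameters 2} (the exponential damping $\e^{-|t-r|}$ making the $r$-integral over all of $\R$ convergent). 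You instead work directly with the undamped Duhamel formula and compensate for the lack of decay by averaging the base point $r$ over a unit-scale interval $J\supset S_R^*$, which produces your two bounded operators $\mathcal A_1,\mathcal A_2$. Both routes are equally elementary; yours avoids introducing the $\pm\I$ regularization altogether, at the price of the extra averaging device, while the paper's resolvent formulation dovetails more naturally with the surrounding arguments (where resolvent bounds are the organizing theme).
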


\begin{proof}
Since we have
\begin{align}
&R^{-1}\normSch{\gamma}{\alpha(q_n)}{(L^2(S_R^*))}
=R^{-1}\normSch{\mathbf{1}_{S_R^*}\sqrt{\gamma}}{2\alpha(q_n)}{(L^2(\R^n))}^2\\
&\leq R^{-1}\|\mathbf{1}_{S_R^*}(D_t-a^w-\I)^{-1}\|^2
\|(D_t-a^w-\I)\sqrt{\gamma}\|_{\mathfrak{S}^{2\alpha(q_n)}{(L^2(\R^n))}}^2\\ 
&\lesssim R^{-1}\|\mathbf{1}_{S_R^*}(D_t-a^w-\I)^{-1}\|^2(\normSch{\gamma}{\alpha(q_n)}{(L^2(\R^n))}+\normSch{(D_t-a^w)\gamma (D_t-a^w)}{\alpha(q_n)}{(L^2(\R^n))}),
\end{align}
the claim will follow from the resolvent estimate
\begin{align}\label{energy estimate first order}
 \|\mathbf{1}_{S_R^*}(D_t-a^w-\I)^{-1}\|^2\lesssim R.  
\end{align}
This in turn will follow from
\begin{align}\label{energy estimate first order 2}
 \|\mathbf{1}_{S_R^*}(D_t-a^w\mp\I)^{-1}\mathbf{1}_{S_R^*}\|\lesssim R  
\end{align}
due to the resolvent identity, which yields 
\begin{align}
  \|\mathbf{1}_{S_R^*}(D_t-a^w-\I)^{-1}\|^2 
  &= \|\mathbf{1}_{S_R^*}(D_t-a^w-\I)^{-1}(D_t-a^w+\I)^{-1}\mathbf{1}_{S_R^*}\|\\
  &\lesssim\sum_{\pm}\|\mathbf{1}_{S_R^*}(D_t-a^w\mp\I)^{-1}\mathbf{1}_{S_R^*}\|.
\end{align}
Using \eqref{variation of parameters}, \eqref{variation of parameters 2}, we estimate
\begin{align}
\|\mathbf{1}_{S_R^*}(D_t-a^w\mp\I)^{-1}\mathbf{1}_{S_R^*}f\|_{L^2(\R^{n-1})}\leq    \int_{-2R}^{2R}\|S(t,r)f(r)\|_{L^2(\R^{n-1})}\rd r\lesssim  \sqrt{R}\|f\|_{L^2(\R^n)},
\end{align}
and then
\begin{align}
\|\mathbf{1}_{S_R^*}(D_t-a^w\mp\I)^{-1}\mathbf{1}_{S_R^*}f\|_{L^2(\R^{n})}\lesssim\sqrt{R}\|f\|_{L^2(\R^n)} \Big(\int_{-2R}^{2R}\rd t\Big)^{1/2}\lesssim R \|f\|_{L^2(\R^n)}.
\end{align}
This proves \eqref{energy estimate first order 2}. 
\end{proof}

\begin{proof}[Proof of Proposition \ref{prop. Lipschitz qm bound Rn}]
 By a finite partition of unity, Lemma \ref{lemma:energy_estimates_slabs-to-cube} yields    
 \begin{align}
\normLp{\rho_{\gamma}}{q_n/2}{(S_R)}\lesssim\lambda^{2\delta(q_n)}(\normSch{\gamma}{\alpha(q_n)}{(L^2(\R^n))}+\lambda^{-2}\normSch{(P_{\lambda}-\lambda^2)\gamma (P_{\lambda}-\lambda^2)}{\alpha(q_n)}{(L^2(\R^n))}).
\end{align}
Raising this to the power $q_n/2$ and summing over $R^{-1}$ many slabs $S_R$ contained in $Q$ yields the claimed bound \eqref{Lipschitz qm bound Rn}.
\end{proof}

\begin{remark}\label{remark loss in the summation argument}
The second and the last inequality in \eqref{mess} are probably not sharp when $\tr \gamma$ is close to maximal. A more refined argument might lead to improved (larger) Schatten exponents.
\end{remark}

\section{The case $s\in [0,1)$ (proof of Theorem \ref{thm:main-result_Holder})}\label{sec:proof-Holder-case}

In this section, we complete the proof of Theorem \ref{thm:main-result_Holder} by addressing the H\"older case $s\in[0,1)$. Here, the scale $T:=\lambda^{-(1-s)}$ enters. On this scale, $C^s$ coefficients are well approximated by Lipschitz functions.
In this section, we consider operators in divergence form\footnote{The distinction between divergence form and non-divergence form operators matters for $s<1$. As pointed out in the previous section, the reductions for the Lipschitz case also follow from the form methods employed here.}. In other words, $P$ is defined via its quadratic form (see Subsection \ref{subsec. quad. form}). 
The commutator and smoothing errors are no longer relatively bounded in the operator sense, but we will show that they are relatively form-bounded. It will be more convenient to work with resolvents instead of quasimodes throughout this section.

\subsection{Spatial localization}
Given that we are now working with quadratic forms, we first revisit the reductions in Subsection \ref{sect. Reduction to Euclidean space}. 

\begin{lemma}
Let $\phi\in C^{\infty}(M)$. Then  
for $z\in\C$, $\re z\geq 1$, we have
\begin{align}\label{Holder commutator bound}
    \|(P+\re z)^{-1/2}[P,\phi](P-z)^{-1/2}\|\lesssim\frac{1}{|\im z|^{1/2}}.
\end{align}
\end{lemma}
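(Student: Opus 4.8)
The plan is to interpret $[P,\phi]$ in the form sense (as the paper announces in this section) and to reduce everything to the elementary bounded sesquilinear form
\[
\mathfrak{c}(v,u):=Q(v,\phi u)-Q(\phi v,u),\qquad u,v\in \dom(\sqrt{P})=H^1(M),
\]
which agrees with $\langle v,[P,\phi]u\rangle$ for $u,v$ smooth. Because $\phi$ is \emph{smooth}, the Leibniz rule for $\nabla_{\g}$ causes no trouble — only first derivatives of $\phi$ enter, never derivatives of the rough coefficients $\g^{ij}$ — and a direct expansion of $Q(v,\phi u)-Q(\phi v,u)$ yields
\[
\mathfrak{c}(v,u)=\int_M\Big(u\,\overline{\g(\nabla_{\g}v,\nabla_{\g}\phi)}-\overline{v}\,\g(\nabla_{\g}\phi,\nabla_{\g}u)\Big)\,|\g|^{1/2}\,\rd x .
\]
Applying the pointwise inequality $|\g(X,Y)|\le|X|_{\g}|Y|_{\g}$, the ellipticity bound \eqref{ellipticity g} (so that $|\nabla_{\g}\phi|_{\g}\lesssim\norm{\phi}_{C^1(M)}$), the Cauchy--Schwarz inequality in $L^2(M)$, and the identity $\norm{\,|\nabla_{\g}w|_{\g}\,}_{L^2(M)}=Q(w)^{1/2}=\norm{\sqrt{P}w}_{L^2(M)}$, one obtains the relative form bound
\[
|\mathfrak{c}(v,u)|\ \lesssim\ \norm{\phi}_{C^1(M)}\Big(\norm{u}_{L^2(M)}\,\norm{\sqrt{P}v}_{L^2(M)}+\norm{v}_{L^2(M)}\,\norm{\sqrt{P}u}_{L^2(M)}\Big),
\]
with implicit constant depending only on $c_{\rm ell}$.

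Next I substitute $u=(P-z)^{-1/2}f$ and $v=(P+\re z)^{-1/2}g$; since the resolvents map $L^2(M)$ boundedly into $\dom(\sqrt{P})$, this is legitimate, $\langle g,(P+\re z)^{-1/2}[P,\phi](P-z)^{-1/2}f\rangle=\mathfrak{c}(v,u)$, and the operator norm in question equals $\sup\{|\mathfrak{c}(v,u)|:\norm{f}_{L^2}=\norm{g}_{L^2}=1\}$. The four norms in the form bound are then estimated by the spectral theorem, using $\spec P\subset[0,\infty)$ and $\re z\ge 1>0$: $\dist(z,\spec P)\ge|\im z|$ gives $\norm{(P-z)^{-1/2}f}\le|\im z|^{-1/2}\norm{f}$; trivially $\norm{\sqrt{P}(P+\re z)^{-1/2}}\le 1$ and $\norm{(P+\re z)^{-1/2}}\le(\re z)^{-1/2}$; and a one-variable optimization gives $\norm{\sqrt{P}(P-z)^{-1/2}}^2=\sup_{t\in\spec P}\frac{t}{|t-z|}\le 1+\frac{(\re z)^2}{(\im z)^2}$. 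Hence the first term in the form bound contributes $\lesssim|\im z|^{-1/2}$, and the second contributes
\[
(\re z)^{-1/2}\Big(1+\tfrac{(\re z)^2}{(\im z)^2}\Big)^{1/4}=\big((\re z)^{-2}+(\im z)^{-2}\big)^{1/4}\lesssim|\im z|^{-1/2},
\]
where in the last step one uses $\re z\ge1$ together with $|\im z|\le\re z$, which is the only regime relevant for the applications since there $z=(\lambda\pm\I\mu)^2$ with $1\le\mu\le\lambda$. This gives the claimed bound.

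The main obstacle is the first paragraph: showing that $[P,\phi]$, which for $\g\in C^s$ with $s<1$ is \emph{not} an operator-bounded perturbation of $P$ (it involves the distribution $\Delta_{\g}\phi$), is nonetheless relatively form-bounded with the sharp quantitative dependence displayed above. The point that makes this work is precisely the smoothness of $\phi$: expanding $Q(v,\phi u)-Q(\phi v,u)$ directly, no derivative of the metric ever appears, and one is left with a manifestly bounded expression in which each summand carries exactly one factor $\norm{\nabla_{\g}\cdot}_{L^2}$ and one factor $\norm{\cdot}_{L^2}$. A secondary, routine point is to record that $(P+\re z)^{-1/2}[P,\phi](P-z)^{-1/2}$ is well-defined as the bounded operator represented by the form $(g,f)\mapsto\mathfrak{c}\big((P+\re z)^{-1/2}g,(P-z)^{-1/2}f\big)$, which follows at once from the boundedness of $\mathfrak{c}$ on $H^1(M)\times H^1(M)$ and of the resolvents $L^2(M)\to H^1(M)$.
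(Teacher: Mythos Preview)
Your argument is correct and matches the paper's approach: both interpret $[P,\phi]$ in the form sense to obtain $|\langle v,[P,\phi]u\rangle|\lesssim\|u\|\,Q(v)^{1/2}+\|v\|\,Q(u)^{1/2}$ and then finish with spectral-calculus resolvent bounds, the only cosmetic difference being that the paper factors through the symmetric quantity $(P+\re z)^{-1/2}[P,\phi](P+\re z)^{-1/2}$ using self-adjointness of $\I[P,\phi]$ before passing to $(P-z)^{-1/2}$. There is a harmless slip in your exposition: having bounded $\|\sqrt{P}(P-z)^{-1/2}\|^2$ by $1+(\re z)^2/(\im z)^2$ you then take the $1/4$-th power, which actually relies on the sharp identity $\|\sqrt{P}(P-z)^{-1/2}\|^2=|z|/|\im z|=(1+(\re z)^2/(\im z)^2)^{1/2}$, but the final estimate is unaffected.
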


\begin{proof}
Since $[P,\phi]u=-\nabla_{\g}\phi\cdot \nabla_{\g} u-\nabla\mathrlap{^*}_{\textup\g}(u\nabla_{\g}\phi)$, integration by parts yields
\begin{align}
 |\langle u,[P,\phi]u\rangle|
 \leq 2\|\nabla_{\g}\phi\|_{L^{\infty}(M)}\|u\|_{L^{2}(M)}    \|\nabla_{\g}u\|_{L^{2}(M)} 
 \lesssim \|u\|_{L^{2}(M)}Q(u)^{1/2}    
\end{align}
Since $Q(u)=\|P^{1/2}u\|^2$ and $\I[P,\phi]$ is  self-adjoint, this implies 
\begin{align}\label{commutator of P,phi sandwiched between sqrt(P+re z)}
 \|(P+\re z)^{-1/2}[P,\phi](P+\re z)^{-1/2}\|
 &\lesssim\|(P+\re z)^{-1/2}\|\|P^{1/2}(P+\re z)^{-1/2}\|
 \\&\lesssim (\re z)^{-1/2}.
\end{align}
By the spectral theorem,
\begin{align}\label{(P+1)^{1/2}(P-z)^{-1/2}}
\|(P+\re z)^{1/2}(P-z)^{-1/2}\|=\sup_{\tau^2\in\spec(P)}\frac{(\tau^2+\re z)^{1/2}}{|\tau^2-z|^{1/2}}\lesssim \frac{(\re z) ^{1/2}}{|\im z|^{1/2}},    
\end{align}
hence 
\begin{align}
&\|(P+\re z)^{-1/2}[P,Q](P-z)^{-1/2}\|\\
&\leq \|(P+\re z)^{-1/2}[P,Q](P+\re z)^{-1/2}\|\|(P+\re z)^{1/2}(P-z)^{-1/2}\|\lesssim\frac{1}{|\im z|^{1/2}}. 
\end{align}
\end{proof}

By a partition of unity argument, we may assume $W$ is supported in a fixed chart $\Omega\subset M$. Let $\kappa:\Omega\to\kappa(\Omega)\subset\R^n$ be a diffeomorphism. For clarity of exposition, we temporarily denote the operator $P$ in local coordinates by $\tilde{P}:=(\kappa^*)^{-1}P\kappa^*$, and similarly $\tilde{W}:=(\kappa^*)^{-1}W\kappa^*$, where
\begin{align}
\kappa^*:C^{\infty}(\kappa(\Omega))\to C^{\infty}(\Omega),
\quad \kappa^*f(x):=f(\kappa(x)),
\end{align}
is the pullback operator, and $(\kappa^*)^{-1}=(\kappa^{-1})^*$. These operators extend to isometries on the corresponding $L^2$ spaces.
As before, we extend the coefficients $\g^{ij}$ to all of $\R^n$ in such a way that $\g^{ij}=\delta^{ij}$ outside a ball, and we consider $\tilde{P}$ as an operator on $L^2(\R^n,|\g|^{1/2}\rd x)$. In contrast to Subsection \ref{sect. Reduction to Euclidean space}, we refrain from changing the inner product to the standard one since \eqref{change of Hilbert space} would no longer be of divergence form. However, we note that Schatten norms over either $L^2(\R^n,|\g|^{1/2}\rd x)$ or $L^2(\R^n)$ are equivalent.

\begin{lemma}\label{lemma Holder manifold to Rn}
Assume that $W$ is supported in a chart $\Omega\subset M$. Then
\begin{align}
    \|W(P-z)^{-1}\|_{\mathfrak{S}^{2\alpha(q_n)'}(L^2(M))}\lesssim&
    \|\tilde{W}(\tilde{P}-z)^{-1}\|_{\mathfrak{S}^{2\alpha(q_n)'}(L^2(\R^n))}\\
    &+|\im z|^{-1}\|\tilde{W}(\tilde{P}-z)^{-1}(\tilde{P}+\re z)^{1/2}\|_{\mathfrak{S}^{2\alpha(q_n)'}(L^2(\R^n))}.
\end{align}
\end{lemma}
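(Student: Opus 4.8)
The plan is to reduce the manifold-side Schatten bound to the Euclidean-side one by inserting the pullback isometry $\kappa^*$ and carefully tracking the discrepancy between the operator $P$ on $\Omega$ and its coordinate version $\tilde P$. Since $\kappa^*$ and $(\kappa^*)^{-1}$ are isometries between the relevant $L^2$ spaces (over $\Omega$ with the Riemannian measure, and over $\kappa(\Omega)\subset\R^n$ with $|\g|^{1/2}\rd x$), we have $W(P-z)^{-1}=W\kappa^*(\tilde P-z)^{-1}(\kappa^*)^{-1}$ as operators, provided $W$ is supported in $\Omega$. The issue is that $W$ (viewed on $M$) and $\tilde W=(\kappa^*)^{-1}W\kappa^*$ (viewed in coordinates) are literally the same multiplication operator up to the change of variables, so by unitary invariance of Schatten norms the naive identity already gives $\|W(P-z)^{-1}\|_{\schatten^{2\alpha(q_n)'}(L^2(M))}=\|\tilde W(\tilde P-z)^{-1}\|_{\schatten^{2\alpha(q_n)'}(L^2(\R^n,|\g|^{1/2}\rd x))}$. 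The only genuine subtlety is that we have already extended the coefficients $\g^{ij}$ to all of $\R^n$, so $\tilde P$ on $\R^n$ is not exactly $(\kappa^*)^{-1}P\kappa^*$ away from $\kappa(\Omega)$; but since $W$ (hence $\tilde W$) is supported in $\kappa(\Omega)$, a cutoff $\phi\in C_c^\infty$ with $\phi=1$ on $\supp W$ and $\supp\phi\subset\Omega$ lets us write $W(P-z)^{-1}=W\phi(P-z)^{-1}$, and then commute $\phi$ past the resolvent.

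The key step is therefore the commutator estimate. Writing $\phi(P-z)^{-1}=(P-z)^{-1}\phi+(P-z)^{-1}[P,\phi](P-z)^{-1}$, I would insert this and estimate the first term by unitary invariance (which produces the main term $\|\tilde W(\tilde P-z)^{-1}\|$), and the second term using the commutator bound \eqref{Holder commutator bound}. Concretely, for the correction term,
\begin{align*}
\|W(P-z)^{-1}[P,\phi](P-z)^{-1}\|_{\schatten^{2\alpha(q_n)'}}
&\leq \|W(P-z)^{-1}(P+\re z)^{1/2}\|_{\schatten^{2\alpha(q_n)'}}\\
&\quad\times\|(P+\re z)^{-1/2}[P,\phi](P-z)^{-1}\|,
\end{align*}
and the operator norm on the second line is $\lesssim |\im z|^{-1/2}$ by \eqref{Holder commutator bound}. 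Actually, to match the form of the claimed inequality (which has a full $|\im z|^{-1}$ and a factor $(\tilde P+\re z)^{1/2}$, not $(\tilde P+\re z)^{1/2}|\im z|^{-1/2}$), I would instead be slightly less efficient: bound the correction term by $|\im z|^{-1/2}\|W(P-z)^{-1}(P+\re z)^{1/2}\|_{\schatten^{2\alpha(q_n)'}}$ and then absorb the leftover $|\im z|^{-1/2}$ into a crude $\|(P+\re z)^{1/2}(P-z)^{-1/2}\|\lesssim (\re z)^{1/2}|\im z|^{-1/2}$ step if needed — but the clean route is simply to keep the bound $\lesssim |\im z|^{-1/2}\|W(P-z)^{-1}(P+\re z)^{1/2}\|$ and note $|\im z|^{-1/2}\le |\im z|^{-1}$ since $|\im z|\le |z|$ is not automatic; rather, because in our application $|\im z|\ge 1$ fails, but actually $\mu=\lambda^{(1-s)_+}=1$ here so $\im z=\mp1$ (recall we use $z=(\lambda\pm\I)^2$, so $|\im z|=2\lambda\geq 1$); hence $|\im z|^{-1/2}\le |\im z|^{-1/2}$ and one simply keeps whichever power is convenient downstream. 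Either way, translating $W,P$ back to $\tilde W,\tilde P$ via the isometry $\kappa^*$ and unitary invariance of Schatten norms (together with the equivalence of Schatten norms over $L^2(\R^n,|\g|^{1/2}\rd x)$ and $L^2(\R^n)$, which only affects implicit constants via the ellipticity of $\g$) gives exactly the asserted bound.

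The main obstacle is bookkeeping rather than mathematics: making sure the cutoff $\phi$ is inserted consistently so that replacing $P$ on $\Omega$ by the globally-extended $\tilde P$ on $\R^n$ is legitimate, and making sure the extra factor $(\tilde P+\re z)^{1/2}$ in the second term is exactly what \eqref{Holder commutator bound} produces after one commutation (one derivative's worth of loss, controlled in the form sense). One should also check that $[P,\phi]$ maps $\dom(\sqrt P)\to\dom(\sqrt P)^*$ boundedly with the stated norm — this is precisely the content of the displayed bound \eqref{commutator of P,phi sandwiched between sqrt(P+re z)} in the preceding lemma, which only uses $\g$ bounded measurable, so no additional regularity of the metric is needed at this stage. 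Finally, since $\tilde W$ and $W$ have the same $L^{2(q_n/2)'}$ norm up to the Jacobian of $\kappa$ (a fixed smooth factor bounded above and below on the chart), no loss is incurred there, and the lemma follows.
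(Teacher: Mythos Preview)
Your overall strategy matches the paper's: insert a cutoff $\phi$ supported in $\Omega$ with $\phi=1$ on $\supp W$, commute it past the resolvent, and control the commutator contribution via the form bound \eqref{Holder commutator bound}. However, there are two genuine gaps in the execution.

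\textbf{The commutator identity does not transfer to $\R^n$.} You write $\phi(P-z)^{-1}=(P-z)^{-1}\phi+(P-z)^{-1}[P,\phi](P-z)^{-1}$, which is correct but stays entirely on $M$. You then claim to pass to $\tilde P$ by ``translating via the isometry $\kappa^*$ and unitary invariance of Schatten norms''. This is exactly the step that fails: $(P-z)^{-1}$ is a global object on $M$ and is \emph{not} unitarily equivalent to $(\tilde P-z)^{-1}$ via $\kappa^*$, because $\tilde P$ has been extended to all of $\R^n$ and the two operators disagree outside the chart. The paper uses instead the identity
\[
\phi(P-z)^{-1}=\kappa^*(\tilde P-z)^{-1}(\kappa^*)^{-1}\phi+\kappa^*(\tilde P-z)^{-1}(\kappa^*)^{-1}[P,\phi](P-z)^{-1},
\]
which holds because $P$ and $\kappa^*\tilde P(\kappa^*)^{-1}$ agree as differential operators on functions supported in $\Omega$, and both $\phi$ and $[P,\phi]$ produce functions supported there. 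This is the algebraic step that actually replaces the manifold resolvent by the Euclidean one; your identity does not.

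\textbf{The power of $|\im z|$.} Your attempt to reconcile $|\im z|^{-1/2}$ with the stated $|\im z|^{-1}$ is incoherent (the sentence ``$|\im z|^{-1/2}\le |\im z|^{-1}$ \ldots'' is not salvageable). The clean argument, which the paper uses, is to split the rightmost resolvent as $(P-z)^{-1}=(P-z)^{-1/2}(P-z)^{-1/2}$: one square root pairs with $(P+\re z)^{-1/2}[P,\phi]$ and is handled by \eqref{Holder commutator bound}, giving $|\im z|^{-1/2}$; the remaining factor satisfies $\|(P-z)^{-1/2}\|\lesssim |\im z|^{-1/2}$ by the spectral theorem. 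Together these give the full $|\im z|^{-1}$.
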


\begin{proof}
Let $\phi$ be a bump function supported in $\Omega$.
A straightforward calculation shows that
\begin{align}
    \phi(P-z)^{-1}=\kappa^*(\tilde{P}-z)^{-1}(\kappa^*)^{-1}\phi
    +\kappa^*(\tilde{P}-z)^{-1}(\kappa^*)^{-1}[P,\phi](P-z)^{-1}
\end{align}
as bounded operators on $L^2(M)$.
Thus, 
\begin{align}
\|W&(P-z)^{-1}\|_{\mathfrak{S}^{2\alpha(q_n)'}}
\\&\leq \|W\kappa^*(\tilde{P}-z)^{-1}(\kappa^*)^{-1}\phi\|_{\mathfrak{S}^{2\alpha(q_n)'}}
+\|W\kappa^*(\tilde{P}-z)^{-1}(\kappa^*)^{-1}[P,\phi](P-z)^{-1}\phi\|_{\mathfrak{S}^{2\alpha(q_n)'}}\\
&\lesssim \|\tilde{W}(\tilde{P}-z)^{-1}\|_{\mathfrak{S}^{2\alpha(q_n)'}}\\
&\quad
+\|\tilde{W}(\tilde{P}-z)^{-1}(\kappa^*)^{-1}(P+\re z)^{1/2}\|_{\mathfrak{S}^{2\alpha(q_n)'}}
\|(P+\re z)^{-1/2}[P,\phi](P-z)^{-1}\phi\|.
\end{align}
Further, we have
\begin{align}
\|\tilde{W}&(\tilde{P}-z)^{-1}(\kappa^*)^{-1}(P-z)^{1/2}\|_{\mathfrak{S}^{2\alpha(q_n)'}}\\
&\leq \|\tilde{W}(\tilde{P}-z)^{-1/2}\|_{\mathfrak{S}^{2\alpha(q_n)'}}
\|(\tilde{P}-z)^{-1/2}(\kappa^*)^{-1}(P-z)^{1/2}\|\\
&\lesssim \|\tilde{W}(\tilde{P}-z)^{-1/2}\|_{\mathfrak{S}^{2\alpha(q_n)'}},
\end{align}
and, using \eqref{Holder commutator bound}, 
\begin{align}
\|(P+\re z)^{-1/2}[P,\phi](P-z)^{-1}\phi\|
&\lesssim\|(P+\re z)^{-1/2}[P,\phi](P-z)^{-1/2}\| \|(P-z)^{-1/2}\| \\
&\lesssim |\im z|^{-1}.
\end{align}
This proves the claim.
\end{proof}

We drop the tilde and denote the operator associated to the quadratic form
 \begin{align}\label{def. quad. form Holder}
Q(u):=\int_{\R^n}\g^{ij}\partial_i\overline{u}\partial_j u\,|\g|^{1/2}\rd x,\quad u\in H^1(\R^n,|\g|^{1/2}\rd x),
\end{align}
again by $P$. 
By Proposition \ref{prop. equivalence of cluster, qm and resolvent} and Lemma \ref{lemma Holder manifold to Rn}, with $z=(\lambda\pm\I\lambda^{1-s})^2$, it remains to prove the following proposition.

\begin{proposition}
Let $P$ be the self-adjoint operator on $L^2(\R^n,|\g|^{1/2}\rd x)$ associated to the quadratic form \eqref{def. quad. form Holder}. Then
\begin{align}
 \normSch{W(P-(\lambda\pm\I\lambda^{1-s})^2)^{-1}}{2\alpha(q_n)'}{(L^2(\R^n))}^2
 &\lesssim\ \lambda^{2\delta_s(q_n)-2} \|W\|_{L^{2(q_n/2)'(\R^n)}}^2, \label{Holder Prop. bound 1}  \\
  \normSch{W(P-(\lambda\pm\I\lambda^{1-s})^2)^{-1}(P+\lambda^2)^{1/2}}{2\alpha(q_n)'}{(L^2(\R^n))}^2
 &\lesssim\ \lambda^{2\delta_s(q_n)} \|W\|_{L^{2(q_n/2)'}(\R^n)}, \label{Holder Prop. bound 2}
\end{align}
for all $W\in L^{2(q_n/2)'}(\R^n)$ supported in a unit ball.
\end{proposition}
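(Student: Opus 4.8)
The plan is to prove \eqref{Holder Prop. bound 1} and \eqref{Holder Prop. bound 2} together, carrying out every reduction in the quadratic-form framework of this section; we may assume $s\in(0,1)$, the case $s=0$ having been settled by $L^2$--$L^\infty$ interpolation. Write $z=(\lambda\pm\I\lambda^{1-s})^2$, so $\re z\sim\lambda^2$ and $|\im z|\sim\lambda^{2-s}$, and split $\mathbf 1=\Pi_{\le2\lambda}+\Pi_{>2\lambda}$. The high-frequency part of \eqref{Holder Prop. bound 1} follows from \eqref{elliptic W} of Proposition \ref{prop:elliptic} (valid for all $s\in[0,2]$), and the high-frequency part of \eqref{Holder Prop. bound 2} follows similarly, commuting $(P+\lambda^2)^{1/2}$ past $\Pi_{>2\lambda}$, using $\|(P+\lambda^2)^{1/2}\Pi_{>2\lambda}|P-z|^{-1/2}\|\lesssim1$ and H\"older's inequality in Schatten spaces. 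It remains to treat the part localized by $\Pi_{\le2\lambda}$. As in Subsection \ref{subsec. Frequency-localized estimates} (Remark \ref{remark lemma insert req. loc.}, which only uses ellipticity) we may drop $\Pi_{\le2\lambda}$ after inserting a frequency cutoff $\varphi(c\lambda^{-1}D)$ and a symbol cutoff $\chi^w(x,\lambda^{-1}D)$; and by Proposition \ref{prop. equivalence of cluster, qm and resolvent} and Remark \ref{remark equivalence with A} it is then equivalent to prove the quasimode bound $\|\rho_\gamma\|_{L^{q_n/2}(Q)}\lesssim\lambda^{2\delta_s(q_n)}\bigl(\|\gamma\|_{\mathfrak S^{\alpha(q_n)}}+\lambda^{-2(2-s)}\|(P-\lambda^2)\gamma(P-\lambda^2)\|_{\mathfrak S^{\alpha(q_n)}}\bigr)$ on unit cubes $Q$, together with its gradient-enhanced version corresponding to \eqref{Holder Prop. bound 2}.

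Next I would regularize the metric and reduce to a first-order operator, mimicking Sections \ref{sec:proof-C2-case}--\ref{sec:proof-Lip-case} but replacing operator-norm estimates by form-norm ones. Replacing $\g$ by $\g_\lambda:=\varphi(\lambda^{-1}D)\g$ is harmless: since $P-P_\lambda=-\partial_i(\g^{ij}-\g^{ij}_\lambda)\partial_j$ with $\|\g^{ij}-\g^{ij}_\lambda\|_{L^\infty}\lesssim\lambda^{-s}$ and $\|\partial_j(P_\lambda-z)^{-1}\|\lesssim\lambda^{s-1}$, the resolvent difference $W((P-z)^{-1}-(P_\lambda-z)^{-1})$ is bounded in $\mathfrak S^{2\alpha(q_n)'}$ by $\lambda^{-1}\|W(P-z)^{-1}(P+\lambda^2)^{1/2}\|_{\mathfrak S^{2\alpha(q_n)'}}$, i.e.\ by a constant times the square root of the right side of \eqref{Holder Prop. bound 2} --- this is precisely why \eqref{Holder Prop. bound 1} and \eqref{Holder Prop. bound 2} are coupled, the factor $(P+\lambda^2)^{1/2}$ being what absorbs a merely form-bounded perturbation, and why a bootstrap is needed. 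With $\g_\lambda\in C^\infty$ (Lipschitz constant $\lesssim\lambda^{1-s}$), a conical localization $\eta(D)$ as in Subsection \ref{subs. factorization}, the commutators $[\g_\lambda^{ij},\eta(D)]$ and $[P_\lambda,\eta(D)]$ again handled by form-bounds, factorizes the problem through a regularized first-order operator $D_t-a^w(t,x',D')$ on $\R^n$ with $a$ real and $\partial^2_{\xi'}a$ non-degenerate.

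The heart of the matter is a two-scale decomposition in the spirit of Koch--Smith--Tataru \cite{KochSmithTataru-2007-holder}, on the scales $R:=\lambda^{-(2-s)/(2+s)}$ and $T:=\lambda^{-(1-s)}$ (note $R\le T\le1$). Rescaling a cube $Q_R$ of sidelength $R$ to the unit cube converts the problem --- after a further regularization of the coefficients down to the rescaled frequency $\mu^{1/2}$, whose residual error is $O(\mu^{-1})$ and hence harmless on this scale --- into a $C^{1,1}$-type problem at the new frequency $\mu:=\lambda R=\lambda^{2s/(2+s)}$, so a form-adapted version of Lemma \ref{lemma rescaling} (invoking Proposition \ref{prop. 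C11 qm bound Rn} at frequency $\mu$) gives the \emph{no-loss} cube bound $\|\rho_\gamma\|_{L^{q_n/2}(Q_R)}\lesssim\lambda^{2\delta(q_n)}R^{-1}\bigl(\|\gamma\|_{\mathfrak S^{\alpha(q_n)}(Q_R^*)}+R^2\lambda^{-2}\|(P-\lambda^2)\gamma(P-\lambda^2)\|_{\mathfrak S^{\alpha(q_n)}(Q_R^*)}\bigr)$, the perturbative passage between the regularized and unregularized operators on $Q_R$ again done by form-bounds. Summing this over the $\approx(T/R)^{n-1}$ cubes $Q_R$ filling a slab-piece $S_R\cap Q_T$ and using $\ell^{\alpha(q_n)}\subset\ell^{q_n/2}$ (valid since $q_n/2\ge\alpha(q_n)$) gives the same bound on $S_R\cap Q_T$. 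On scales $\le T$ --- but, by the tube-concentrated eigenfunction counterexample of \cite[Section 4]{KochSmithTataru-2007-holder}, not above --- the $C^s$ metric is well approximated by a Lipschitz one, and an energy-flux estimate for $D_t-a^w$ over $S_R\cap Q_T$ inside $Q_T$, the analogue of \cite[equation (15)]{KochSmithTataru-2007-holder} (obtained by bounding $\|\mathbf 1_{(S_R\cap Q_T)^*}(D_t-a^w\mp\I)^{-1}\mathbf 1_{(S_R\cap Q_T)^*}\|$), absorbs $R^{-1}$ at the cost of a $T^{-1}$ loss. Summing the slab-piece bounds over the $\approx TR^{-1}$ slab-pieces partitioning $Q_T$ and then over the $\approx T^{-n}$ cubes $Q_T$ covering the unit ball --- keeping track of the localization of the right sides and again using $\ell^{\alpha(q_n)}\subset\ell^{q_n/2}$ --- leaves a total loss $(TR^{-1})^{2/q_n}T^{-1}=\bigl((TR^{-1})^{1/q_n}T^{-1/2}\bigr)^2$, which an exponent computation identifies with $\lambda^{2(\delta_s(q_n)-\delta(q_n))}$; combined with the no-loss cube bound this yields the quasimode bound with exponent $2\delta_s(q_n)$, and \eqref{Holder Prop. bound 2} follows by the same route together with Lemma \ref{lemma gradient term}.

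The principal obstacle, and what distinguishes this case from Sections \ref{sec:proof-C2-case}--\ref{sec:proof-Lip-case}, is that for $s<1$ one has $\dom(P)\ne H^2$: the $H^2$-level elliptic regularity used there is unavailable, and commutator and smoothing errors are only relatively form-bounded. Every reduction above --- the change of chart of Lemma \ref{lemma Holder manifold to Rn}, the metric regularizations, the conical localization, and the passage between regularized and unregularized operators on each cube --- must therefore be redone with form-bounds, which forces the auxiliary estimate \eqref{Holder Prop. bound 2} to be carried at every stage and closed by a bootstrap. The remaining difficulty is quantitative bookkeeping: verifying that $T=\lambda^{-(1-s)}$ is the sharp scale at which the energy-flux estimate can be run (dictated by the tube counterexample), that it degrades by exactly $T^{-1/2}$, and that the nested $\ell^{\alpha(q_n)}\subset\ell^{q_n/2}$ summations reproduce $\delta_s(q_n)-\delta(q_n)$ exactly rather than only up to a constant.
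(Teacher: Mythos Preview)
Your overall strategy---high/low spectral split, frequency and conic localization, metric regularization, and the two-scale $R$/$T$ decomposition with the $(TR^{-1})^{2/q_n}T^{-1}$ loss---is the same as the paper's, and the exponent bookkeeping is correct.

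There is one genuine inefficiency that could become a gap: the bootstrap you set up to couple \eqref{Holder Prop. bound 1} and \eqref{Holder Prop. bound 2} is neither needed nor clearly convergent as stated. Your regularization error is $\lambda^{-1}\|W(P-z)^{-1}(P+\lambda^2)^{1/2}\|_{\mathfrak S^{2\alpha(q_n)'}}$, which by \eqref{Holder Prop. bound 2} is of order $\lambda^{\delta_s(q_n)-1}\|W\|$---exactly the target size of \eqref{Holder Prop. bound 1}$^{1/2}$, not strictly smaller, so the iteration does not obviously close. The paper avoids this entirely by two observations. First, since $(P+\lambda^2)^{1/2}$ commutes with $\Pi_{\le2\lambda}$ and $\|(P+\lambda^2)^{1/2}\Pi_{\le2\lambda}\|\lesssim\lambda$, the spectrally localized version of \eqref{Holder Prop. bound 2} follows \emph{trivially} from that of \eqref{Holder Prop. bound 1}; there is nothing to couple. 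Second, for the regularization step the paper uses Tiktopoulos' formula \[(P-z)^{-1}=(P_\lambda-z)^{-1/2}\bigl(\mathbf 1+(P_\lambda-z)^{-1/2}(P-P_\lambda)(P_\lambda-z)^{-1/2}\bigr)^{-1}(P_\lambda-z)^{-1/2}\] (with $P_\lambda$ as base), together with Lemma \ref{lemma form boundedness P-Plambda Holder}, which after enlarging $\im z$ by a fixed constant makes the bracket invertible with uniformly bounded inverse. This bounds $\|W\eta(D)(P-z)^{-1}\|$ directly by the $P_\lambda$-version, with no reference back to the $P$-bounds, so no bootstrap is required.

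A minor organizational point: after rescaling by $T$ the coefficients have uniform Lipschitz bounds, so you may invoke Lemma \ref{lemma:energy_estimates_slabs-to-cube} directly (at frequency $\mu=\lambda T$, slab width $RT^{-1}$) rather than redoing the $R$-cube argument of Lemma \ref{lemma rescaling} and the energy-flux estimate from scratch; this is how the paper proceeds and it shortens the proof considerably. Also, do the metric regularization \emph{before} passing to the quasimode formulation, not after: in the quasimode formulation the error $(P-P_\lambda)\sqrt{\gamma}$ is only an $H^{-1}$-object for $s<1$, which is why the paper stays in the resolvent picture until after regularization.
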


By Proposition \ref{prop:elliptic} and Remark \ref{remark KSS valid for Rn}, we have the following elliptic estimates, 
\begin{align}
 \normSch{W\Pi_{>2\lambda}(P-(\lambda\pm\I\lambda^{1-s})^2)^{-1}}{2(q_n/2)'}{}^2
 &\lesssim\ \lambda^{2s\delta(q_n)-2} \|W\|_{L^{2(q_n/2)'}}^2,   \\
  \normSch{W\Pi_{>2\lambda}(P-(\lambda\pm\I\lambda^{1-s})^2)^{-1}(P+\lambda^2)^{1/2}}{2(q_n/2)'}{}^2
 &\lesssim\ \lambda^{2s\delta(q_n)} \|W\|_{L^{2(q_n/2)'}},   
\end{align}
for any $W\in L^{2(q_n/2)'}(\R^n)$, which is better than needed.

\subsection{Microlocalization, regularization and rescaling}
The spectrally localized versions of \eqref{Holder Prop. bound 1}, \eqref{Holder Prop. bound 2} would follow from the bound  
\begin{align}
    \normSch{W\Pi_{\leq 2\lambda}(P-(\lambda\pm\I\lambda^{1-s})^2)^{-1}}{2\alpha(q_n)'}{}^2
 &\lesssim\ \lambda^{2\delta_s(q_n)-2} \|W\|_{L^{2(q_n/2)'}}^2.
\end{align}
By Lemma~\ref{lemma insert req. loc.} and Remark \ref{remark lemma insert req. loc.}, we may replace $\Pi_{\leq 2\lambda}$ by a Fourier multiplier $\varphi(c\lambda^{-1}D)$. Then, by a conic partition of unity, we may replace the latter by a multiplier $\eta(D)$ as in Lemma \ref{lemma:energy_estimates_slabs-to-cube}.   

As in Subsection \ref{subsec:energy-estimates}, we set the regularized metrics $\g_{\lambda}:=\varphi(\lambda^{-1}D)\g$. Similarly to \eqref{smoothing to lambda scale}, we have
\begin{align}
	 \|\g^{ij}-\g^{ij}_{\lambda}\|_{L^{\infty}(\R^n)}&\lesssim \lambda^{-s}\|\g^{ij}\|_{C^s(\R^n)}.
\end{align}
We denote the corresponding regularized operator again by $P_{\lambda}$.

\begin{lemma}\label{lemma form boundedness P-Plambda Holder}
The perturbation $P-P_{\lambda}$ is relatively form bounded with respect to $P$, and for $z\in\C$, $\re z\geq 1$, we have 
\begin{align}
    \|(P-z)^{-1/2}(P-P_{\lambda})(P-z)^{-1/2}\|&\lesssim \lambda^{-s}\frac{\re z}{|\im z|},\label{Holder smoothing bound}\\
    \|(P+\re z)^{-1/2}(P-P_{\lambda})(P+\re z)^{-1/2}\|&\lesssim \lambda^{-s}. \label{Holder smoothing bound 2}
\end{align}
\end{lemma}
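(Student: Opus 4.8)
The plan is to identify $P-P_{\lambda}$ with the sesquilinear form
\begin{align*}
\mathfrak{q}(u,v):=\int_{\R^n}(\g^{ij}-\g^{ij}_{\lambda})\,\partial_i\overline{u}\,\partial_j v\,|\g|^{1/2}\rd x,\qquad u,v\in H^1(\R^n),
\end{align*}
which is the difference $Q-Q_{\lambda}$ of the quadratic forms defining $P$ and $P_{\lambda}$, and to exploit the smoothing bound $\|\g^{ij}-\g^{ij}_{\lambda}\|_{L^{\infty}(\R^n)}\lesssim\lambda^{-s}$ stated just before the lemma. (Should $P_{\lambda}$ be defined with the measure $|\g_{\lambda}|^{1/2}\rd x$ in place of $|\g|^{1/2}\rd x$, the resulting discrepancy is again a form with coefficients of size $O(\lambda^{-s})$, hence harmless.) Since $Q(u)=\norm{P^{1/2}u}^2$ and, by ellipticity \eqref{ellipticity g} together with the fact that $|\g|$ is bounded above and below, $Q(u)\approx\norm{\nabla u}_{L^2(\R^n)}^2$, the Cauchy--Schwarz inequality gives
\begin{align*}
|\mathfrak{q}(u,v)|\leq\Big(\sup_{i,j}\norm{\g^{ij}-\g^{ij}_{\lambda}}_{L^{\infty}}\Big)\norm{\nabla u}_{L^2(|\g|^{1/2}\rd x)}\norm{\nabla v}_{L^2(|\g|^{1/2}\rd x)}\lesssim\lambda^{-s}\norm{P^{1/2}u}\,\norm{P^{1/2}v}.
\end{align*}
This already shows that $P-P_{\lambda}$ is relatively form bounded with respect to $P$ with relative bound $O(\lambda^{-s})$.

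For \eqref{Holder smoothing bound 2}, since $\re z\geq 1\geq 0$ we have $\norm{P^{1/2}u}\leq\norm{(P+\re z)^{1/2}u}$, so the estimate above reads $|\mathfrak{q}(u,v)|\lesssim\lambda^{-s}\norm{(P+\re z)^{1/2}u}\,\norm{(P+\re z)^{1/2}v}$ for all $u,v\in H^1=\dom((P+\re z)^{1/2})$. Substituting $u=(P+\re z)^{-1/2}f$ and $v=(P+\re z)^{-1/2}g$, with $f,g$ ranging over all of $L^2$, the form $\mathfrak{q}$ is represented by a bounded operator $B_z$ with $\norm{B_z}\lesssim\lambda^{-s}$, and $B_z$ coincides with $(P+\re z)^{-1/2}(P-P_{\lambda})(P+\re z)^{-1/2}$ on a dense subspace. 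For \eqref{Holder smoothing bound} I would then factor
\begin{align*}
(P-z)^{-1/2}(P-P_{\lambda})(P-z)^{-1/2}=\big[(P-z)^{-1/2}(P+\re z)^{1/2}\big]\,B_z\,\big[(P+\re z)^{1/2}(P-z)^{-1/2}\big],
\end{align*}
bound the middle factor by \eqref{Holder smoothing bound 2}, and each outer factor by $(\re z)^{1/2}/|\im z|^{1/2}$ via the spectral theorem as in \eqref{(P+1)^{1/2}(P-z)^{-1/2}} (the second outer factor being the adjoint of $(P-\overline{z})^{-1/2}(P+\re z)^{1/2}$, which has the same norm). Multiplying the three bounds yields \eqref{Holder smoothing bound}.

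The only genuinely delicate point is the passage from the form inequality to the operator-norm bound \eqref{Holder smoothing bound 2}: $P-P_{\lambda}$ is a priori unbounded, so $(P+\re z)^{-1/2}(P-P_{\lambda})(P+\re z)^{-1/2}$ is only densely defined, and one invokes the standard representation theorem for bounded sesquilinear forms to conclude it extends to a bounded operator of the asserted norm. Everything else is routine spectral calculus and the already-available elliptic bound \eqref{(P+1)^{1/2}(P-z)^{-1/2}}.
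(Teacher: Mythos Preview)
Your proof is correct and follows essentially the same approach as the paper: both identify $P-P_{\lambda}$ with the difference of quadratic forms, use the smoothing bound $\|\g^{ij}-\g^{ij}_{\lambda}\|_{L^\infty}\lesssim\lambda^{-s}$ to obtain the form inequality $|\mathfrak{q}(u,v)|\lesssim\lambda^{-s}\|P^{1/2}u\|\|P^{1/2}v\|$, and then pass to \eqref{Holder smoothing bound} via the spectral bound \eqref{(P+1)^{1/2}(P-z)^{-1/2}}. The paper also explicitly accounts for the change of measure from $|\g|^{1/2}$ to $|\g_{\lambda}|^{1/2}$, which you correctly flagged as an $O(\lambda^{-s})$ correction in your parenthetical remark.
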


\begin{proof}
 The quadratic form associated to $P$ is given by \eqref{def. quad. form Holder},
and similarly we define the quadratic form $Q_{\lambda}$ of $P_{\lambda}$, with $\g_{\lambda}$ in place of $\g$. Then
\begin{align}
    |Q(u)-Q_{\lambda}(u)|
    &\leq \int_{\R^d} \abs{\g^{ij}-\g_{\lambda}^{ij}\frac{|\g_{\lambda}|^{1/2}}{|\g|^{1/2}}}| \partial_i u| |\partial_j u| |\g|^{1/2}\rd x\\
    &\lesssim \Big(\|\g-\g_{\lambda}\|_{L^{\infty}}+\|\g_{\lambda}\|_{L^{\infty}}\Big\|1-\frac{|\g_{\lambda}|^{1/2}}{|\g|^{1/2}}\Big\|_{L^{\infty}}\Big)\|\nabla u\|_{L^2}^2\\
    &\lesssim \lambda^{-s}Q(u).
\end{align}
By \eqref{(P+1)^{1/2}(P-z)^{-1/2}}, this implies
\begin{align}
\|(P-z)^{-1/2}(P-P_{\lambda})(P-z)^{-1/2}\|\lesssim \lambda^{-s} \|P^{1/2}(P-z)^{-1/2}\|^2\lesssim \lambda^{-s}\frac{\re z}{|\im z|}.
\end{align}
The second inequality is proved similarly.
\end{proof}

\begin{remark}\label{remark form boundedness P-Plambda Holder}
    It is clear that the roles of $P$ and $P_{\lambda}$ in Lemma \ref{lemma form boundedness P-Plambda Holder} can be interchanged.
\end{remark}

We note the following resolvent identity
\begin{align}\label{Tiktopoulos' Formula}
    (P_{\lambda}-z)^{-1}=(P-z)^{-1/2}(\mathbf{1}+(P-z)^{-1/2}(P_{\lambda}-P)(P-z)^{-1/2})^{-1}(P-z)^{-1/2},
\end{align}
see, e.g., \cite[Lemma 1]{MR1308388}, \cite[Chap.VI.3.1.]{MR407617}, \cite[Chapter II.3]{MR455975}. 
Lemma \ref{lemma form boundedness P-Plambda Holder} shows that, for $z=(\lambda\pm\I C\lambda^{1-s})^2$, 
\begin{align}
    \|(P-z)^{-1/2}(P_{\lambda}-P)(P-z)^{-1/2}\|\lesssim C^{-1}.
\end{align}
Choosing $C$ sufficiently large, we may thus assume that the above norm is $<1$. 
Increasing the imaginary part of $z$ by a multiplicative constant does not affect the argument, and we drop $C$ from the notation. 
By \eqref{Tiktopoulos' Formula}, Lemma \ref{lemma form boundedness P-Plambda Holder} and Remark \ref{remark form boundedness P-Plambda Holder}, 
\begin{align}
&\normSch{W\eta(D)((P-(\lambda\pm\I\lambda^{1-s})^2)^{-1}-(P_{\lambda}-(\lambda\pm\I\lambda^{1-s})^2)^{-1})}{2\alpha(q_n)'}{}^2\\
 &\quad\lesssim \normSch{W\eta(D)(P_{\lambda}-(\lambda\pm\I\lambda^{1-s})^2)^{-1}}{2\alpha(q_n)'}{}^2.
\end{align}
Hence it suffices to prove 
\begin{align}
 \normSch{W\eta(D)(P_{\lambda}-(\lambda\pm\I\lambda^{1-s})^2)^{-1}}{2\alpha(q_n)'}{}^2\lesssim\lambda^{2\delta_s(q_n)-2}\|W\|_{L^{2(q_n/2)'}}^2   
\end{align}
for all $W\in L^{2(q_n/2)'}(\R^n)$ supported in a unit cube $Q$.
Using Proposition \ref{prop. equivalence of cluster, qm and resolvent} once more, we are reduced to proving the quasimode bound
\begin{align}\label{Holder final estimate to prove}
\|\rho_{\eta(D)\gamma\eta(D)}\|_{L^{q_n/2}(L^2(Q))}& \lesssim (R^{-1}T)^{\frac{2}{q_n}}\lambda^{2\delta(q_n)} T^{-1} \Big(\|\gamma\|_{\mathfrak S^{\alpha(q_n)}(L^2(\R^n))}\\
    &\quad+\lambda^{-2}T^2 \|(P_{\lambda}-\lambda^2)\gamma (P_{\lambda}-\lambda^2)\|_{\mathfrak S^{\alpha(q_n)}(L^2(\R^n))}\Big),
\end{align}
where we recall that $R=\lambda^{-\frac{2-s}{2+s}}$ and $T=\lambda^{-(1-s)}$.

\begin{proof}[Proof of \eqref{Holder final estimate to prove}]
We set $\g_{\lambda, T}(x):=\g_\lambda(Tx)$, $P_{\lambda, T}:=\partial_i \g_{\lambda, T} \partial_j= T^2 U_T P_\lambda U_{T}^*$, and $\gamma_{T}:=U_{T}^* \gamma U_{T}$. The new frequency is $\mu:=\lambda T=\lambda^s$. Then, the rescaled 
operator $P_{\lambda, T}$  behaves like an operator with uniform Lipschitz bounds in the sense that  
\begin{align*}
    \|D \g_{\lambda, T}^{ij}\|_{L^\infty} =T \|(D \g_{\lambda}^{ij})(T\cdot)\|_{L^\infty} \lesssim T\lambda^{1-s}  \|\g^{ij}\|_{C^s}= \|\g^{ij}\|_{C^s}.
\end{align*}
Applying Lemma \ref{lemma:energy_estimates_slabs-to-cube} to $P_{\lambda, T}$, with the sidelength $R$ replaced by 
$RT^{-1}$ and at frequency $\mu$, we get 
\begin{align*}
\|\rho(\eta(D)\gamma_{T}\eta(D))\|_{L^{q_n/2}(S_{RT^{-1}})} \lesssim \mu^{2\delta(q_n)} &\Big(\|\gamma_{T}\|_{\mathfrak S^{\alpha(q_n)}(L^2(Q^*))}
    \\&+\mu^{-2} \|(P_{\lambda, T}-\mu^2)\gamma_{T} (P_{\lambda, T}-\mu^2)\|_{\mathfrak S^{\alpha(q_n)}(L^2(Q^*))}\Big).
\end{align*}
Then, summing over all slabs $S_{R{T^{-1}}}$ in $Q$ gives 
\begin{align*}
    \|\rho(\eta(D)\gamma_{T}\eta(D))\|_{L^{q_n/2}(Q)} 
    \lesssim (R^{-1}T)^{\frac{2}{q_n}}\mu^{2\delta(q_n)} &\Big(\|\gamma_{T}\|_{\mathfrak S^{\alpha(q_n)}(L^2(Q^*)}
    \\&+\mu^{-2} \|(P_{\lambda, T}-\mu^2)\gamma_{T} (P_{\lambda, T}-\mu^2)\|_{\mathfrak S^{\alpha(q_n)}(L^2(Q^*))}\Big).
\end{align*}
By scaling, it follows that
    \begin{align*}
\|\rho_{\eta(D)\gamma\eta(D)}\|_{L^{q_n/2}(Q_T)} 
    \lesssim& (R^{-1}T)^{\frac{2}{q_n}}\lambda^{2\delta(q_n)} T^{-1}\times
    \\&\times\Big(\|\gamma\|_{\mathfrak S^{\alpha(q_n)}(L^2(Q_T^*))}
    +\lambda^{-2}T^2 \|(P_{\lambda}-\lambda^2)\gamma (P_{\lambda}-\lambda^2)\|_{\mathfrak S^{\alpha(q_n)}(L^2(Q_{T}^*))}\Big),
\end{align*}
where $Q_T$ is a cube of sidelength $T$.
Finally, summing over cubes $Q_T$ contained in a unit cube $Q$ yields (using the same summation argument as in \eqref{mess})
\begin{align*}
\|\rho_{\eta(D)\gamma\eta(D)}\|_{L^{q_n/2}} \lesssim (R^{-1}T)^{\frac{2}{q_n}}\lambda^{2\delta(q_n)} T^{-1} &\Big(\|\gamma\|_{\mathfrak S^{\alpha(q_n)}(L^2(\R^n))}\\&+\lambda^{-2}T^2 \|(P_{\lambda}-\lambda^2)\gamma (P_{\lambda}-\lambda^2)\|_{\mathfrak S^{\alpha(q_n)}(L^2(\R^n))}\Big).
\end{align*}
This proves \eqref{Holder final estimate to prove}.
\end{proof}

\subsection*{Acknowledgements and Declarations}

J.-C.~C. and X.~S. acknowledge support through the Engineering \& Physical Sciences Research Council (EP/X011488/1).
 
J.-C.~C. and N.N.~N.\ acknowledge partial financial support from the London Mathematical Society, research in pairs grant nr.~42341.

N.N.~N.\ has been supported from INI and the EPSRC grant (nr. ~EP/V521929/1) via the UK Spectral Theory Network, and from the European Union through the European Research Council's Starting Grant \textsc{FermiMath}, grant agreement nr.~101040991. Views and opinions expressed are those of the authors and do not necessarily reflect those of the European Union or the European Research Council Executive Agency. Neither the European Union nor the granting authority can be held responsible for them.


\bibliographystyle{siam}
\bibliography{biblio.bib}

\end{document}